		\pgfplotsset{compat=1.12}
\newcommand{\ut}[1]{\underaccent{\tilde}{#1}}
\newcommand{\utt}[1]{\underaccent{\hat}{#1}}
\newcommand{\stkout}[1]{\ifmmode\text{\sout{\ensuremath{#1}}}\else\sout{#1}\fi}
\numberwithin{equation}{section}
\newtheoremstyle{break}{\topsep}{\topsep}{\itshape}{}{\bfseries}{.}{\newline}{}
\newtheoremstyle{exampl}{\topsep}{\topsep}{\upshape}{}{\bfseries}{.}{\newline}{}
\theoremstyle{plain}% default
\newtheorem{thm}{Theorem}[section]
\newtheorem{lem}[thm]{Lemma}  
\newtheorem{prop}[thm]{Proposition}  
\newtheorem{cor}[thm]{Corollary} 
\newtheorem{ass}[thm]{Assumption} 
\theoremstyle{break}% default
\theoremstyle{definition}
\newtheorem{defi}[thm]{Definition}
\theoremstyle{exampl}
\theoremstyle{remark}
\newtheorem{rem}[thm]{Remark}
\DeclareMathOperator{\E}{\mathbb E}
\definecolor{mygray}{gray}{.5}
\title{Moment constrained optimal dividends: precommitment \& consistent planning}
\author{S\"oren Christensen\footnote{Department of Mathematics, Kiel University, Germany. E-mail address: christensen@math.uni-kiel.de.} \and Kristoffer Lindensj\"o\footnote{Department of Mathematics, Stockholm University, Sweden. E-mail address: kristoffer.lindensjo@math.su.se.}}
\date{\today}
\begin{document}
\maketitle

\begin{abstract} A moment constraint that limits the number of dividends in the optimal dividend problem is suggested. This leads to a new type of time-inconsistent stochastic impulse control problem. 
First, the optimal solution in the precommitment sense is derived. 
Second, the problem is formulated as an intrapersonal sequential dynamic game in line with Strotz' consistent planning.  In particular, the notions of pure dividend strategies and a (strong) subgame perfect Nash equilibrium are adapted. An equilibrium is derived using a smooth fit condition. The equilibrium is shown to be strong. 
%Properties of the precommitment and equilibrium solutions are studied. 
The uncontrolled state process is a fairly general diffusion. 
%An example based on a Wiener process with drift is studied. 
\end{abstract}

\noindent \textbf{Keywords:} 
Constrained stochastic control, 
Optimal dividend problem, 
%Precommitment, 
Stochastic impulse control, 
%Strotz' consistent planning, 
Subgame perfect Nash equilibrium, 
Time-inconsistency.
\vspace{1mm}
 
\noindent \textbf{AMS MSC2010:}  
60G40; 60J70; 91A10; 91A25; 91G80; 91B02; 91B51.

\section{Introduction} \label{sec:intro}
We consider a family of filtered probability spaces $(\Omega,\mathcal{F},(\mathcal{F}_t)_{t\geq0},\mathbb{P}_x),x\in{\mathbb{R}},$ satisfying the usual conditions and a one-dimensional process $X=(X_t)_{t\geq 0}$ given under $\mathbb{P}_x$ by 
 \begin{align} 
dX_t =  \mu(X_t) dt + \sigma(X_t) dW_t - dD_t, \enskip X_{0}=x \enskip \mbox{a.s.}\label{state-process}
\end{align}
where 
$D=(D_t)_{t\geq 0}$ is a non-decreasing adapted process and $W=(W_t)_{t\geq 0}$ is a Wiener process. The associated expectations are denoted by $\mathbb{E}_x$. The classical optimal dividend problem in this setting is to suppose that the owner of an insurance company with surplus process $X$ chooses the dividend policy $D$ that maximizes the sum of discounted dividend payments until bankruptcy. Specifically, the owner considers the stochastic control problem
\begin{align}  
\begin{split}
U(x)&:=\sup_{D\in \mathcal A(x)}J(x;D), \enskip J(x;D):= \mathbb{E}_x\left(\int_0^\tau e^{-rt}dD_t\right),\\ 
\tau&:=\inf\{t\geq 0: X_t\leq0\}, \label{classical-problem}
\end{split}
\end{align}
where $\tau$ is interpreted as the bankruptcy time, $r>0$ is a discount factor and
%\added[id=kri,remark={Should $D_0=0$ be assumed? I guess yes... in the shreve paper it is... }]{}  
%\added[id=kri,remark={[Should we use RCLL or LCRL, \cite[p669]{paulsen2007optimal} uses one and \cite[p.74]{hunting2013optimal} uses the other].... \textbf{Lets go for LCRL.... fix all related issues below... }}]{}
%
%
% 
\begin{align}  
&\mbox{$D \in \mathcal A(x)$ if $D$ is a LCRL non-decreasing adapted process with $D_{0}=0$}\\
&\mbox{such that $X_{\tau+}\geq0$.}
\end{align}  
Problem \eqref{classical-problem} was first studied in \cite{shreve1984optimal} where --- under certain conditions for the functions $\mu(\cdot)$ and $\sigma(\cdot)$; notably, $\mu'(x)\leq r$ for all $x\geq 0$ --- it was found that if an optimal policy exists then it is to pay dividends only in order to reflect the process $X$ at a barrier $x^*$ and if no optimal policy exists then the optimal value function $U(x)$ is the limit of the value function given a reflecting barrier dividend policy when sending the barrier to infinity. %.; see Proposition \ref{classical-sol} below (or \cite[Theorem 4.3]{shreve1984optimal}). 
A criticism of this formulation of the dividend problem  from an economic viewpoint is that the solution involves an unreasonably high number of dividend payments; in particular, once $X$ reaches the barrier $x^*$ an infinite number of dividends will be paid during any immediately following time interval no matter how small.  
One way of taking this criticism into account is to introduce a fixed cost for each dividend payment, which obviously limits the optimal number of dividend payments and thus leads to a stochastic impulse control problem.  
In the present paper we instead introduce a moment constraint that more directly limits the number of dividend payments. Specifically, if we denote by $\tau_n$ the timing of the $n$:th dividend payment for a discrete dividend policy $D$, then $D$ is in the present paper said to be admissible for a given initial surplus $x\geq0$ in \eqref{state-process}, which we write as $D \in \mathcal A(x,k)$, if $D \in \mathcal A(x)$ and the moment constraint $\mathbb{E}_{x}\left( \sum_{n:\tau_n\leq \tau}e^{-r\tau_n}\right) \leq \frac{1}{k}$ is satisfied, where $k>0$ is a fixed parameter. 
%
%\added[id=kri,remark={\textbf{OLD:} The restriction that we only consider impulse control policies $S$ as admissible is without loss of generality. This main observation motivating this statement is that any dividend policy $D \in  \mathcal A (x)$ for which one cannot identify the timings of the dividends as a sequence of stopping times it clearly holds that (the appropriately adapted version of) \eqref{constraint2} is violated \textbf{[works?]}.}]{}
%
Clearly, a dividend policy satisfying the moment constraint  must be of impulse control type; in particular, an admissible dividend policy $D\in \mathcal A(x,k)$ can be represented as 
%
%\added[id=kri,remark={note the strict inequality in the below}]{}
%
\begin{align}
\begin{split}  
&\mbox{$D_t =\sum_{n:\tau_n<t}\zeta_n, t\geq 0$ with $D_0=0$, where, for each $n=1,2...$,}\\
&\mbox{$\tau_n\geq0$ is an $(\mathcal{F}_t)_{t\geq 0}$-stopping time such that $\tau_{n+1}>\tau_n$ on $\{\tau_n<\infty\}$}\\
&\mbox{and $\zeta_n$ is an $\mathcal{F}_{\tau_n}$-measurable random variable with $0<\zeta_n\leq X_{\tau_n}$ a.s.,}\\
&\mbox{and $\tau_n \rightarrow \infty$ a.s. $n \rightarrow \infty$.}\label{condaaa}\\
\end{split}
\end{align}
Note also that $\zeta_n = X_{\tau_n}-X_{\tau_n+}$. 
The interpretation of $\zeta_n$ is that it is the $n$:th dividend payment. In the sequel we denote a dividend policy of impulse control type as defined in \eqref{condaaa} by $S=(\tau_n,\zeta_n)_{n\geq 1}$ --- which means, using a slight abuse of notation, that $S=(\tau_n,\zeta_n)_{n\geq 1} \in \mathcal A(x)$ for each $x\geq0$ by definition --- and for ease of exposition we restate the moment constraint as
%
%\added[id=kri,remark={The inequality in the below should be non-strict right!(?)}]{}
%
\begin{align}  
R(x;S):=\mathbb{E}_{x}\left( \sum_{n:\tau_n\leq \tau}e^{-r\tau_n}\right) \leq \frac{1}{k}, \enskip\mbox{where $k>0$ is fixed}.
\enskip \tag{MC}\label{constraint2}
\end{align}
This implies that %may now equivalently define the set of admissible dividend policies $\mathcal A(x;k)$ as follows:  
\begin{align}  
&\mbox{$S=(\tau_n,\zeta_n)_{n\geq 1}\in \mathcal A(x;k)$ if \eqref{constraint2} holds.}
\end{align}
The objective of the present paper is to study the problem of maximizing the sum of expected discounted dividends over the set of admissible dividend policies $\mathcal A(x;k)$, i.e. in particular under the constraint \eqref{constraint2}. It turns out that this problem is time-inconsistent in the sense that a dividend policy which is, in the precommitment sense, optimal at time $0$ will not generally be optimal at a time $t>0$  when considering the constraint \eqref{constraint2} using the value for the state process at $t$, %([\added[id=kri,remark={}]{if we naively adapt the moment constraint?}]) 
see Remark \ref{pre-commitment-prob-is-time-incons}. 
We remark that it could be argued that it would be more reasonable to call this problem space-inconsistent but we have chosen to use the more established term. 
The main contribution of the present paper is to formulate and solve this problem both in the precommitment sense and in the game-theoretic sense of Strotz' consistent planning. The present paper is, to our knowledge, the first to study a stochastic control problem that is time-inconsistent due to a constraint %\added[id=soe]{of the type considered here }
using the game-theoretic approach and we note that it seems likely that there are many other interesting constrained stochastic control problems that can be formulated and solved along the lines of the present paper.

An interpretation of the constraint \eqref{constraint2} is that the company wants to limit the number of dividend payments not mainly due to financial costs of paying dividends but rather because a high number of dividend payments is undesirable for other reasons; for example because the financial market expects a limited number of dividend payments, or because they involve tedious administrative work for the decision maker. Note that $\frac{1}{k}$ can be interpreted as the maximum number of expected dividends to be paid until an independent exponential time with expectation $\frac{1}{r}$.

The rest of the paper is structured as follows. Section \ref{sec:prev-lit} mentions related literature. In Section \ref{sec:problem} the model of the present paper is formulated in more detail and some results that will be used in the sequel are presented. 
In Section \ref{sec:pre-commitment-solution} the precommitment interpretation of the constrained dividend problem is formulated and solved; %(Theorem \ref{sec:pre-commitment-solution-THM}) 
and properties of the solution are investigated. %(Corollary \ref{sol-conv-pre} and Corollary \ref{sol-conv-pre2})
These results rely on the solution to the (unconstrained) optimal dividend problem under the assumption that a fixed cost is incurred for each dividend payment, which is therefore also recapitulated and studied in Section \ref{sec:pre-commitment-solution}. 
In Section \ref{sec:time-consistent-solution} we formulate and solve the constrained dividend problem as a game along the lines of Strotz' consistent planning. 
An equilibrium is derived using a smooth fit condition. % (Theorem \ref{sol-equi})
 The equilibrium is shown to be strong  
%(Theorem \ref{strong-eq-thm}) 
and its properties are investigated. 
%(Theorem \ref{sol-conv-equi} and Theorem \ref{deriatives-eq-va-func}).
% 
A discussion of our equilibrium definition is found in Section \ref{discussion-eq}. 
An example is studied in Section \ref{example:sec}. 
Model assumptions are discussed in Appendix \ref{app-model}. 
Most proofs are found in Appendix \ref{app-proofs}.

\subsection{Background and related literature} \label{sec:prev-lit}
Th study of time-inconsistent control problems goes back to a seminal paper by Strotz \cite{strotz} in the 1950s, but the field has experienced a considerable activity during the last years. Time-inconsistency in stochastic control typically arises due to the consideration of 
(1) non-exponential discounting,
(2) a state-dependent reward function, or 
(3) nonlinearities in the expected reward, e.g. mean-variance utility; see e.g. \cite{tomas-disc,christensen2017finding,christensen2018time,lindensjo2017timeinconHJB} for descriptions of these kinds of problems and references.   
Time-inconsistency is typically studied using the precommitment approach, which means finding an optimal control policy for a given initial value of the controlled process, or the time-consistent (game-theoretic) approach along the lines of Strotz' invention. Time-inconsistency can also be studied using the notion of dynamic optimality, see \cite{pedersen2016optimal,pedersen2013optimal}. 

The game-theoretic approach is to interpret a time-inconsistent problem as an intrapersonal sequential dynamic game. The approach is formalized by defining a subgame perfect Nash equilibrium suitable for the particular problem at hand. See Section \ref{discussion-eq} for an interpretation of the game in the present paper and Section \ref{sec:time-consistent-solution} for our equilibrium definition. A main reference for the general theory of the game-theoretic approach to time-inconsistent stochastic control is \cite{bjork2017time}. %which introduced a system of PDEs called the extended HJB system and proved a corresponding verification theorem saying that if the extended HJB system has a solution then it corresponds to an equilibrium. 
A large literature studying particular time-inconsistent problems using the game-theoretic approach has evolved during the last years; a short recent survey is contained in \cite{lindensjo2017timeinconHJB}. The general theory of time-inconsistent stopping is studied in e.g. \cite{christensen2017finding,christensen2018time,huang2018time}. % and the references therein. 

 The present paper is different from most papers on time-inconsistent control in the sense that the time-inconsistency does not arise due to the factors (1)--(3) mentioned above; instead it is due to the consideration of a constraint for an otherwise time-consistent stochastic control problem. While we believe, as mentioned above, that the present paper is the first to consider the game-theoretic approach to a problem that is time-inconsistent for this reason there are many papers that study stochastic control under different kinds of constraints and we here only mention a few. 
Optimal dividends under ruin probability constraints are studied in \cite{dickson2006optimal,hipp2003optimal}, while optimal dividends under a constraint for the ruin time is studied in \cite{hernandez2018time}. A dividend problem under the constraint that the surplus process must be above a given fixed level in order for dividend payments to be admissible is studied in \cite{paulsen2003optimal}; see also \cite{lindensjo2019optimal} where this problem is studied in a model which allows for capital injection. 
Optimal stopping under expectation constraints is studied in \cite{ankirchner2019verification,bayraktar2017dynamic} while stochastic control under expectation constraints is studied in \cite{yu2018dynamic}. Distribution-constrained optimal stopping is studied in  
\cite{
bayraktar2019distribution,
beiglbock2016geometry}. %kallblad2017dynamic 
It should also be mentioned that mean-variance problems are sometimes formulated as constrained optimization  problems. For example,  constrained mean-variance portfolio selection (a control problem) is studied in \cite{pedersen2013optimal} and
constrained mean-variance selling strategies (a stopping problem) are studied in \cite{pedersen2016optimal}, although the main topic of these papers is the notion of dynamic optimality. In \cite{pedersen2018constrained} a constrained portfolio selection problem is investigated using the dynamic optimality approach and a comparison is made to the precommitment approach. 
% 
%
%\added[id=kri,remark={[check this... eg to see if it mentions anything about Nash equilibrium!!... ]}]{} 
%
%
The game-theoretic approach to a mean-variance optimization problem under the constraint of no short selling is studied in \cite{Bensoussan2018}. We also mention \cite{nutz2019conditional} in which a conditional optimal stopping problem is studied using a game-theoretic approach.

Time-inconsistent dividend problems have been studied before: The optimal dividend problem under non-exponential discounting is studied using the game-theoretic approach in \cite{chen2014optimal,chen2018optimal,chen2017optimal,li2016equilibrium,zhao2014dividend,zhu2019singular}, while \cite{chen2016optimal} studies this problem incorporating also capital injections. 
%
%The optimal dividend problem under time-inconsistent preferences  and a ruin penalty is studied using the game-theoretic approach in \cite{chen2018optimal}. 
%
%\cite{yang2019optimal}
%\cite{bian2018pre}
%\cite{sun2016precommitment}
%
%
%NOTES: 
%%% \cite[Theorem 2.4]{hunting2013optimal} gives a condition guaranteeing that $\lim_{x\rightarrow \infty}g(x)=\infty$ which means that an optimal strategy exists, see also \cite{bai2012optimal}.
%
%%% \cite{hunting2013optimal} also contains some interesting references, especially for the problem given Levy processes.
%
%%%\cite{chen2017optimal} writes ""Hunting and Paulsen (2013) show that small fixed transaction costs can have significant effect on the optimal value function and optimal dividend strategy.""; and ""Recently, there are also literature that involve the optimal dividend problems with time-inconsistent preferences.""\textbf{ and then mentions references that seem to study non-exponential discounting...} 
%

The precommitment approach of the present paper relies, as we have mentioned, on results for the fixed cost dividend problem. This problem was first studied in \cite{jeanblanc1995optimization} which considers a Wiener process with drift and later in \cite{paulsen2007optimal} where a more general diffusion model is considered; see Remarks \ref{sol-general-fixed-cost-problem} and \ref{rem-about-fixedcostres} and Appendix \ref{app-model} for further references. 

There is a vast literature on many different versions of the optimal dividend problem, see e.g. the literature reviews \cite{albrecher2009optimality,avanzi2009strategies} and the more recent surveys included in \cite{de2017dividend,optimaldividendFerrari2019,lindensjo2019optimal}.

%The optimal dividend problem with transactions costs (fixed and proportional) for a class of jump-diffusions is studied in \cite{hunting2013optimal}. The problem is also studied under solvency constraints in \cite{bai2012optimal}. 
%See also: \cite{
%avram2007optimal,
%cadenillas2006classical,
%loeffen2009optimal
%}...

\section{Model formulation and preliminaries} \label{sec:problem}

In this section we specify model assumptions and present results and notation on which the subsequent analysis relies. %In the subsequent sections of the paper we suppose Assumption \ref{coeff-assum} and Assumption \ref{coeff-assum2} below hold. 
%Let $C^n(0,\infty)$ denote the space of real-valued functions on $(0,\infty)$ that are  continuously differentiable $n$ times. Let $C[0,\infty)$ denote the space of continous real-valued functions on $[0,\infty)$. 
% 
Unless otherwise stated we assume throughout the paper that all items in Assumption \ref{coeff-assum} below hold;  see Appendix \ref{app-model} for a discussion of Assumption \ref{coeff-assum}.

\begin{ass} \label{coeff-assum} \quad 
\begin{enumerate}[label=(A.\arabic*)] 

%\item  \label{coeff-assum:1}  $|\mu(x)|+|\sigma(x)| \leq K(1+x)$ for all $x\geq0$ and some $K>0$. 

\item  \label{coeff-assum:2} $\mu(\cdot)$ and $\sigma(\cdot)$ are continuously differentiable and Lipschitz continous%%%%% \added[id=kri,remark={needed since we use a reference to \cite{bai2012optimal}}]{}
, and $\mu'(\cdot)$ and $\sigma'(\cdot)$ are Lipschitz continous.
%$\in C[0,\infty)\cap C^1(0,\infty)$, and . 

\item \label{coeff-assum:3} $\sigma^2(x)>0$ for all $x\geq0$.

\item \label{coeff-assum:4} $\mu'(x)<r$ for all $x\geq0$ (recall that $r>0$ is the discount rate).

\item \label{coeff-assum:5} An $\varepsilon>0$ and an $x_a\geq 0$ such that $\mu'(x)<r-\varepsilon$ for all $x\geq x_a$ exist.

\item \label{coeff-assum:6} $\mu(0) > 0$.
\end{enumerate}
\end{ass}
(Note that 
\ref{coeff-assum:4} and \ref{coeff-assum:5} are, given that \ref{coeff-assum:2} holds, equivalent to the condition that
there exists an $\varepsilon>0$ such that $\mu'(x)<r-\varepsilon$ for all $x\geq 0$.) 
Consider the boundary value problem 
\begin{align}
&A_X g(x):=\mu(x) g'(x) +\frac{1}{2} \sigma^2(x) g''(x)=r g(x), \enskip x>0 \label{ODE1}\\
& g'(0)>0,  \enskip g(0)=0, \enskip g(\cdot)\in C^2(0,\infty).\label{ODE2}
\end{align}

\begin{lem} \label{canon-sol-lemma} Suppose \ref{coeff-assum:2}--\ref{coeff-assum:3} hold. Then, a solution $g(\cdot)$ of \eqref{ODE1}--\eqref{ODE2}  that is unique up to multiplication of a positive constant  exists (and is in the sequel called a canonical solution). Moreover:
\begin{enumerate}[label=(\roman*)] 

\item \label{canon-sol-lemma:1} $g(\cdot)\in C^3(0,\infty)$ and $g'''(\cdot)$ is Lipschitz continous.

\item \label{canon-sol-lemma:2} Adding 
%\ref{coeff-assum:1}
%
%\added[id=kri,remark={I guess \ref{coeff-assum:1} is not needed here? but instead in order for the SDE to have strong sol?... the same goes for the results below?}]{}
%
%and 
\ref{coeff-assum:4} implies that $g'(x)>0$ for all $x\geq0$.

\item \label{canon-sol-lemma:3} Adding %\ref{coeff-assum:1} and 
\ref{coeff-assum:4}--\ref{coeff-assum:5}  implies that 
$\lim_{x\rightarrow\infty}g'(x)=\infty$.
%
%\added[id=kri,remark={\textbf{OLD:relax this assumption!? I guess a related question is that if a solution to the fixed cost problem exists for a particular $c$, then dies it also exist for all other $c'$?} The case when no optimal policy exist for the fixed cost problem should for both the precommitment case and the equilibrium case correspond to no optimal policy existing and the value function be the same as for the fixed cost problem,[smth like this]}]{}
%
%
%
%OLD: Item \ref{canon-sol-lemma:5} should hold since \cite[p.80]{hunting2013optimal}] writes "In [24] [Shreve], it was shown that if $\lambda = 0$, i.e., there are no jumps, then under conditions similar to those here, the canonical solution is either strictly concave-convex or strictly concave."].
%%
%
% \cite[Theorem 4.1]{bai2012optimal} seems kind of relevant to the case when $\lim_{x\rightarrow \infty }g(x)=\infty$ is not fulfilled.... 

\item \label{canon-sol-lemma:4} Adding %\ref{coeff-assum:1} and 
\ref{coeff-assum:4}--\ref{coeff-assum:6} implies that a unique $x_b\in(0,\infty)$ such that $g''(x_b)=0$, $g''(x)<0$ for $x<x_b$ and $g''(x)>0$ for $x>x_b$ exists.
%\footnote{[and if $x_b=0$ then $g''(x)<0$ for all $x\geq 0$, I think see \cite[p.80]{hunting2013optimal}.]}

\end{enumerate}
\end{lem}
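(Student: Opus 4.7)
My plan is to handle existence and uniqueness of the canonical solution via standard linear ODE theory and then prove (i)--(iv), relying throughout on a \emph{strict-crossing property}: at any $\tilde x\geq 0$ with $g''(\tilde x)=0$, differentiating \eqref{ODE1} yields
\[
\tfrac12\sigma^2(\tilde x)\,g'''(\tilde x)=(r-\mu'(\tilde x))\,g'(\tilde x)>0
\]
by \ref{coeff-assum:4} and (ii), so $g''$ strictly crosses zero from below to above at every zero and therefore has at most one zero on $[0,\infty)$. For existence and uniqueness, I rewrite \eqref{ODE1} in normal form $g''=\tfrac{2}{\sigma^2}(rg-\mu g')$; under \ref{coeff-assum:2}--\ref{coeff-assum:3} this is a linear ODE with locally Lipschitz right-hand side and $\sigma^2>0$, giving a unique global $C^2$ solution for each initial pair. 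Solutions with $g(0)=0$ form a one-dimensional subspace, within which $g'(0)>0$ selects a half-line, yielding uniqueness up to a positive constant.

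For (i), the right-hand side of the normal form is $C^1$, so $g\in C^3$; a further differentiation, using that $\mu,\mu',\sigma,\sigma'$ are Lipschitz and $\sigma^2$ is bounded below on compacts, gives $g'''$ Lipschitz. For (ii), I argue by contradiction: if $x_0:=\inf\{x>0:g'(x)=0\}<\infty$, then $g(x_0)=\int_0^{x_0}g'>0$, so the ODE at $x_0$ forces $g''(x_0)=2rg(x_0)/\sigma^2(x_0)>0$; continuity of $g''$ then makes $g'$ strictly increasing near $x_0$, so $g'<0$ immediately to the left of $x_0$, contradicting the definition.

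For (iii), the strict-crossing property makes $g''$ eventually of constant sign, and I rule out the non-positive case as follows. If $g''\leq 0$ on $[x_1,\infty)$, then $g'$ is non-increasing there, so $g(x)\geq g(x_1)+(x-x_1)g'(x)$; combined with $\mu(x)\leq\mu(x_a)+(r-\varepsilon)(x-x_a)$ from \ref{coeff-assum:5}, this gives $rg(x)-\mu(x)g'(x)\geq rg(x_1)>0$ for $x$ sufficiently large, hence $g''(x)>0$, a contradiction. Thus $g''$ is eventually positive, $g'$ is eventually increasing, and $L:=\lim_{x\to\infty}g'(x)\in(0,\infty]$ exists. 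Assuming $L<\infty$, one has $g(x)/x\to L$, and an analogous estimate combined with $\sigma^2(x)=O(x^2)$ (from Lipschitz continuity of $\sigma$) yields $g''(x)\geq c/x$ for some $c>0$ and large $x$, which upon integration contradicts $L<\infty$. For (iv), the ODE at $0$ together with \ref{coeff-assum:6} gives $g''(0)<0$, and combined with $g''$ eventually positive from (iii), the strict-crossing property produces a unique zero $x_b\in(0,\infty)$ with the claimed sign pattern. The main obstacle is the apparent circularity between (iii) and (iv); I resolve it by isolating the strict-crossing property (which requires only \ref{coeff-assum:4} and (ii)) and proving the eventual sign of $g''$ directly from \ref{coeff-assum:5}, making (iv) a short corollary.
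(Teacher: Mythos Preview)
Your argument is correct and self-contained, whereas the paper's proof is almost entirely by citation: existence, uniqueness and (i) are referred to \cite{paulsen2007optimal}, (ii) to \cite[Lemma~4.2(a)]{shreve1984optimal}, (iii) to \cite[Proposition~2.5]{bai2012optimal}, and (iv) is obtained by combining \cite[Lemma~2.2]{paulsen2007optimal} with the differentiated ODE and \cite[Lemma~4.1]{shreve1984optimal}. The one substantive idea the paper does spell out---computing $g'''$ from \eqref{ODE1} and observing that it is strictly positive wherever $g''$ vanishes---is exactly your ``strict-crossing property'', and you use it more systematically than the paper does. Your direct proof of (iii) is the main new content relative to the paper: rather than invoking \cite{bai2012optimal}, you exploit the concavity inequality $g(x)\geq g(x_1)+(x-x_1)g'(x)$ together with the linear upper bound on $\mu$ from \ref{coeff-assum:5} to force $rg-\mu g'>0$, and then the growth bound $\sigma^2(x)=O(x^2)$ to show $g''\geq c/x$ and rule out a finite limit for $g'$. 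This is elementary and transparent. The trade-off is length: the paper's proof is a few lines of references, while yours would run to a page; on the other hand, yours makes clear exactly which parts of Assumption~\ref{coeff-assum} are doing the work at each step, which the citation-based proof obscures.
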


It will in the sections below be shown that both the precommitment and the equilibrium solutions (to be defined) are of the following kind:  
\begin{defi} A  dividend policy of impulse control type $S=(\tau_n,\zeta_n)_{n\geq 1}$, see \eqref{condaaa}, is said to be a constant lump sum dividend barrier policy if:
\begin{itemize} 
\item Each dividend is of the same size, i.e. 
\begin{align} 
\zeta_n=\bar{x}-\underline{x}, \enskip \mbox{for some $\bar{x}>\underline{x}\geq 0$},
\end{align}  
except possibly at time $0$ when a dividend of size $x-\underline{x}$ is paid if $x\geq \bar{x}$.
\item A dividend is paid when the process $X$ reaches a fixed level, i.e. 
\begin{align} 
\tau_1=\inf\{t > 0: X_t\geq \bar{x}\}, \enskip \tau_n=\inf\{t> \tau_{n-1}: X_t\geq \bar{x}\}, \enskip n=2,3...
\end{align}
(we use the convention $\inf \emptyset= \infty$).
\end{itemize}
\end{defi}
In the sequel we will in the case of a dividend policy of impulse control type $S=(\tau_n,\zeta_n)_{n\geq 1}$ write the corresponding value function, cf. \eqref{classical-problem} and \eqref{condaaa}, as 
\begin{align} 
J(x;S)= \mathbb{E}_x\left( \sum_{n:\tau_n\leq \tau}e^{-r\tau_n}\zeta_n\right).\label{value-func}
\end{align}
Moreover, with a slight abuse of notation we denote a constant lump sum dividend barrier policy $S$ by $(\underline{x},\bar{x})$ and write the corresponding value function \eqref{value-func} as $J(x;\underline{x},\bar{x})$, and similarly for e.g. the function $R(x;S)$ defined in \eqref{constraint2}. 
%Note also that every constant lump sum dividend barrier policy satisfies $(\underline{x},\bar{x}) \in \mathcal A(x)$.\added[id=kri,remark={check!}]{}
%In the sequel it will always be supposed that a pair $(\underline{x},\bar{x})$ denotes a  lump sum dividend barrier policy, i.e. that $\bar{x}>\underline{x}\geq0$.
%
%
%
We will use the following results.%\added[id=kri,remark={some of the assum in \ref{coeff-assum:2}--\ref{coeff-assum:6} are not needed for the two results below, but I guess we don't have to mention this?}]{}

\begin{prop} \label{expl-JR} %Suppose \ref{coeff-assum:1}--\ref{coeff-assum:3} hold and let $g(\cdot)$ be a canonical solution to \eqref{ODE1}--\eqref{ODE2}. 
Consider an arbitrary constant lump sum dividend barrier policy $(\underline{x},\bar{x})$. The corresponding value function $J(x;S)$ %in \eqref{value-func}
 is then continous and can be written as
\begin{equation}
J(x;\underline{x},\bar{x})=\begin{cases}
J^0(x;\underline{x},\bar{x}):=g(x)\frac{\bar{x}-\underline{x}}{g(\bar{x})-g(\underline{x})}, 					&\ 0 \leq x \leq \bar{x},\\
 x-\underline{x}+ J^0(\underline{x};\underline{x},\bar{x}) ,		& x>\bar{x}. \label{expl-JR:J} 
	\end{cases}
	\end{equation}
Moreover, the corresponding function $R(x;S)$ defined in \eqref{constraint2} is continous and  can be written as
\begin{equation}
R(x;\underline{x},\bar{x})=\begin{cases}
R^0(x;\underline{x},\bar{x}):=g(x)\frac{1}{g(\bar{x})-g(\underline{x})}, 					&\ 0 \leq x \leq \bar{x},\\
1 + R^0(\underline{x};\underline{x},\bar{x}) ,		& x>\bar{x}. \label{expl-JR:R}
	\end{cases}
	\end{equation}
	\end{prop}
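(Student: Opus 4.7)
The plan is to treat the two regions $x\leq \bar x$ and $x>\bar x$ separately, reduce both to the single quantity $\phi(x):=\mathbb E_x\bigl[e^{-r\tau_1}\mathbf 1_{\{\tau_1<\tau\}}\bigr]$ for $x\in[0,\bar x]$, and then apply the strong Markov property at $\tau_1$ to close a one-step recursion for $J$ and $R$ at the base point $\underline x$. Continuity of the two expressions will come for free once the formulas in the two regions are shown to agree at $x=\bar x$.

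The first key step is the identity
\begin{align}
\phi(x)=\frac{g(x)}{g(\bar x)},\qquad x\in[0,\bar x].\label{prop-plan-eq1}
\end{align}
Under a constant lump sum barrier policy with $x\leq\bar x$, no dividend is paid before $\tau_1$, so $X$ coincides on $[0,\tau_1\wedge\tau]$ with the uncontrolled diffusion. By Assumption \ref{coeff-assum} and Lemma \ref{canon-sol-lemma}, $g\in C^2(0,\infty)$, and $A_X g=rg$, so Itô's formula shows that $e^{-r(t\wedge\tau_1\wedge\tau)}g(X_{t\wedge\tau_1\wedge\tau})$ is a local martingale; since $g$ is continuous and hence bounded on $[0,\bar x]$, it is in fact a uniformly integrable martingale up to $\tau_1\wedge\tau$. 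Optional stopping together with $g(0)=0$ then yields $g(x)=g(\bar x)\,\phi(x)$, which gives \eqref{prop-plan-eq1}. I would note that $g(\bar x)>g(\underline x)\geq 0$ by Lemma \ref{canon-sol-lemma}\ref{canon-sol-lemma:2} so no division issues arise.

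Next, for $0\leq x\leq \bar x$ the strong Markov property applied at $\tau_1$ gives
\begin{align}
J(x;\underline x,\bar x)&=\phi(x)\bigl[(\bar x-\underline x)+J(\underline x;\underline x,\bar x)\bigr],\\
R(x;\underline x,\bar x)&=\phi(x)\bigl[1+R(\underline x;\underline x,\bar x)\bigr],
\end{align}
because at $\tau_1$ the process is pushed from $\bar x$ down to $\underline x$ and, conditionally on $\tau_1<\tau$, the remaining dynamics start afresh from $\underline x$ with the same policy. Specialising to $x=\underline x$, one obtains a scalar linear equation for $J(\underline x;\underline x,\bar x)$ (resp.\ $R(\underline x;\underline x,\bar x)$) whose solution is
\begin{align}
J(\underline x;\underline x,\bar x)=\frac{g(\underline x)(\bar x-\underline x)}{g(\bar x)-g(\underline x)},\qquad R(\underline x;\underline x,\bar x)=\frac{g(\underline x)}{g(\bar x)-g(\underline x)}.
\end{align}
Substituting these back and simplifying using $\phi(x)=g(x)/g(\bar x)$ yields $J^0$ and $R^0$ as claimed for $x\in[0,\bar x]$.

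Finally, for $x>\bar x$ the very first dividend is paid at time $0$, reducing the state instantly from $x$ to $\underline x$; the strong Markov property at $0$ gives $J(x;\underline x,\bar x)=(x-\underline x)+J(\underline x;\underline x,\bar x)$ and $R(x;\underline x,\bar x)=1+R(\underline x;\underline x,\bar x)$, matching \eqref{expl-JR:J}--\eqref{expl-JR:R}. Continuity on $[0,\bar x]$ and on $(\bar x,\infty)$ follows from continuity of $g$, and continuity at $\bar x$ is an immediate algebraic check: $J^0(\bar x;\underline x,\bar x)=(\bar x-\underline x)g(\bar x)/(g(\bar x)-g(\underline x))$ coincides with $(\bar x-\underline x)+J^0(\underline x;\underline x,\bar x)$, and likewise for $R^0$. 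The only subtle point I anticipate is the localisation/boundedness step needed to pass from local martingale to martingale in the derivation of \eqref{prop-plan-eq1}; this is routine here because $g$ is bounded on $[0,\bar x]$ and the stopping time $\tau_1\wedge\tau$ keeps $X$ in $[0,\bar x]$.
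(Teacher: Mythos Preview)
Your proof is correct and follows essentially the same route as the paper's own argument. The paper's proof is only a one-sentence sketch invoking the strong Markov property, It\^o's formula, and the fact that $J(\cdot;\underline{x},\bar{x})$ and $R(\cdot;\underline{x},\bar{x})$ solve the ODE \eqref{ODE1} on $(0,\bar{x})$ with the boundary conditions $J(0)=R(0)=0$ and $J(x)=J(\underline{x})+x-\underline{x}$, $R(x)=R(\underline{x})+1$ for $x\geq\bar{x}$; your write-up makes the same machinery explicit via the Laplace transform $\phi(x)=g(x)/g(\bar{x})$ and the one-step recursion at $\underline{x}$, and handles the localisation point carefully.
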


\begin{lem} \label{propR} \quad 
\begin{enumerate} [label=(\roman*)]
\item \label{propR:part1}
%Suppose \ref{coeff-assum:1}--\ref{coeff-assum:3} hold and 
Consider an arbitrary initial surplus $x>0$. 
Then, 
$R(x;\underline{x},\bar{x})$ is continous and strictly decreasing in $\bar{x}$ with $\lim_{\bar{x}\rightarrow \infty}R(x;\underline{x},\bar{x}) = 0$ for any fixed $\underline{x}\geq 0$. 
Moreover,  
$R(x;\underline{x},\bar{x})$ is continous and strictly increasing in $\underline{x}$ with $\lim_{\underline{x}\rightarrow \bar{x}}R(x;\underline{x},\bar{x}) = \infty$ for any fixed $\bar{x}>0$. 
\item \label{propR:part2} $R(\bar{x};\underline{x},\bar{x})$ is continous and strictly decreasing in $\bar{x}$ with 
$\lim_{\bar{x}\rightarrow \infty}R(\bar{x};\underline{x},\bar{x}) = 1$ for any fixed $\underline{x}>0$. $R\left(\bar{x};0,\bar{x}\right)=1$ for any fixed $\bar{x}>0$.
\end{enumerate}

\end{lem}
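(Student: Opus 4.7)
The plan is to argue directly from the two-branch formula for $R(x;\underline{x},\bar{x})$ established in Proposition \ref{expl-JR}, combined with the qualitative properties of the canonical solution $g$ from Lemma \ref{canon-sol-lemma}: $g(0)=0$, $g'(\cdot)>0$ on $[0,\infty)$ (so that $g$ is continuous and strictly increasing), and $\lim_{x\to\infty}g'(x)=\infty$, which via the mean value theorem upgrades to $\lim_{x\to\infty}g(x)=\infty$.

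For part \ref{propR:part1} I would distinguish the two branches of \eqref{expl-JR:R}. On $\{\bar{x}\geq x\}$ one has $R(x;\underline{x},\bar{x})=g(x)/(g(\bar{x})-g(\underline{x}))$, so differentiating gives
\begin{equation}
\partial_{\bar{x}}R=-\frac{g(x)g'(\bar{x})}{(g(\bar{x})-g(\underline{x}))^2}<0,\qquad \partial_{\underline{x}}R=\frac{g(x)g'(\underline{x})}{(g(\bar{x})-g(\underline{x}))^2}>0,
\end{equation}
using $g,g'>0$ on $(0,\infty)$. On $\{\bar{x}<x\}$ (which forces $\underline{x}<\bar{x}<x$) one has $R(x;\underline{x},\bar{x})=1+g(\underline{x})/(g(\bar{x})-g(\underline{x}))$, and the signs of both partial derivatives follow analogously. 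Continuity in $\bar{x}$ at the interface $\bar{x}=x$ is verified by direct substitution, since both branches there evaluate to $g(x)/(g(x)-g(\underline{x}))$. The limit $\lim_{\bar{x}\to\infty}R(x;\underline{x},\bar{x})=0$ is immediate from $g(\bar{x})\to\infty$, and the limit $\lim_{\underline{x}\to\bar{x}}R(x;\underline{x},\bar{x})=\infty$ follows since the denominator $g(\bar{x})-g(\underline{x})$ vanishes while the relevant numerator stays bounded away from $0$.

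For part \ref{propR:part2} I specialize the upper branch of \eqref{expl-JR:R} to $x=\bar{x}$ and obtain
\begin{equation}
R(\bar{x};\underline{x},\bar{x})=\frac{g(\bar{x})}{g(\bar{x})-g(\underline{x})}.
\end{equation}
For $\underline{x}>0$ this rewrites as $1/(1-g(\underline{x})/g(\bar{x}))$; because $g$ is strictly increasing with $g(\bar{x})\to\infty$, the ratio $g(\underline{x})/g(\bar{x})$ is strictly decreasing to $0$, and both strict monotonicity and the limit $1$ follow at once. The second assertion $R(\bar{x};0,\bar{x})=1$ is then immediate from $g(0)=0$.

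I do not expect a real obstacle; the whole statement reduces, once Proposition \ref{expl-JR} is granted, to elementary differentiation and to the limit $g(x)\to\infty$. The only care needed is the piecewise case distinction driven by the position of $x$ relative to $\bar{x}$ and the verification that the two branches agree at $\bar{x}=x$; everything else is a direct consequence of Lemma \ref{canon-sol-lemma}\ref{canon-sol-lemma:2}--\ref{canon-sol-lemma:3}.
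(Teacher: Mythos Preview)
Your proposal is correct and follows exactly the approach indicated by the paper, whose own proof is the single sentence ``The claims can be verified using \eqref{expl-JR:R}, Lemma \ref{canon-sol-lemma} (which e.g.\ implies that $\lim_{x\rightarrow\infty}g(x)=\infty$) and $\bar{x}>\underline{x}\geq0$.'' You have merely written out the elementary verification that the paper leaves implicit. One small caveat: on the branch $\bar{x}<x$ with $\underline{x}=0$ the partial $\partial_{\bar{x}}R$ actually vanishes (since $g(0)=0$), so strict monotonicity in $\bar{x}$ on that piece is lost; this edge case is not addressed in the paper either and is immaterial for how the lemma is used later.
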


In Sections \ref{sec:pre-commitment-solution} and \ref{sec:time-consistent-solution} it will be shown that if we let the moment constraint \eqref{constraint2} vanish in the sense of sending $k\rightarrow 0$ then the precommitment and equilibrium solutions both converge to the solution of the classical dividend problem \eqref{classical-problem}; see Corollary \ref{sol-conv-pre} and Theorem \ref{sol-conv-equi}, respectively. 
For the convenience of the reader we therefore include the following result which follows directly from \cite[Theorem 4.3]{shreve1984optimal} and Lemma \ref{canon-sol-lemma}.  
\begin{prop} \label{classical-sol}   %If the boundary value problem 
%\begin{align}
%&A_XU(x)=rU(x), \enskip 0<x<x^*\\
%& U(0)=0, U'(x^*)=1, U''(x^*)=0, 
%\end{align}
%Let $g(\cdot)$ be a canonical solution to \eqref{ODE1}--\eqref{ODE2} and 
%
The optimal dividend policy for the unconstrained problem \eqref{classical-problem} reflects the state process \eqref{state-process} at the barrier 
\begin{align}
x^*:=x_b \label{star-def}
\end{align}
(where $x_b$ is defined in Lemma \ref{canon-sol-lemma}) while being flat off $\{t\geq0:X_t=x^*\}$; where it shall be understood that if the initial surplus satisfies $x>x^*$ then there is an immediate dividend payment of size $x-x^*$. The optimal value function is 
\begin{equation}
U(x)=\begin{cases} \label{uncon-problembU}
\frac{g(x)}{g'(x^*)}, 					&\ 0 \leq x \leq x^*,\\
x-x^*+ U\left(x^*\right) ,		& x>x^*.
	\end{cases}
	\end{equation} 
\end{prop}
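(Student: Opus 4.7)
The plan is to obtain the result as a direct specialization of \cite[Theorem~4.3]{shreve1984optimal} once the candidate barrier has been identified via Lemma \ref{canon-sol-lemma}. First I would check that the hypotheses of that theorem are met in our setup: the Lipschitz regularity of $\mu(\cdot),\sigma(\cdot)$ from \ref{coeff-assum:2}, the non-degeneracy $\sigma^2(x)>0$ from \ref{coeff-assum:3}, and the key structural assumption $\mu'(x)<r$ from \ref{coeff-assum:4}. Together with \ref{coeff-assum:5} (which rules out the degenerate case of \cite{shreve1984optimal} where only a limiting value function exists) and \ref{coeff-assum:6}, these assumptions also ensure that the canonical solution $g(\cdot)$ of \eqref{ODE1}--\eqref{ODE2} exists and has the qualitative properties listed in Lemma \ref{canon-sol-lemma}.

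Next I would record the standard fact (essentially a verification argument with It\^o's formula applied to $g$ up to the bankruptcy time) that for any candidate reflecting barrier $b>0$, the value function associated with the reflecting barrier dividend policy at $b$ is
\begin{equation}
U_b(x)=\begin{cases} g(x)/g'(b), & 0\leq x\leq b,\\ x-b+g(b)/g'(b), & x>b.\end{cases}
\end{equation}
Maximizing $U_b(x)$ over $b\geq 0$ for fixed $x$ reduces, on the region $\{x\leq b\}$, to minimizing $g'(b)$ (since $g(x)>0$ for $x>0$ by Lemma \ref{canon-sol-lemma}\ref{canon-sol-lemma:2}); a direct computation on $\{x>b\}$ shows that the same choice of barrier is optimal there too, so the optimal barrier is characterised as any minimizer of $g'(\cdot)$ on $[0,\infty)$.

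Lemma \ref{canon-sol-lemma}\ref{canon-sol-lemma:4} now supplies this minimizer: since $g''<0$ on $[0,x_b)$ and $g''>0$ on $(x_b,\infty)$ with $g''(x_b)=0$, $g'(\cdot)$ is strictly decreasing on $[0,x_b]$ and strictly increasing on $[x_b,\infty)$, and Lemma \ref{canon-sol-lemma}\ref{canon-sol-lemma:3} ensures $g'(b)\to\infty$, so $x_b$ is the unique minimizer of $g'$. Hence $x^*=x_b$ is the unique optimal barrier and substituting $b=x^*$ into the expression for $U_b$ yields \eqref{uncon-problembU}. For $x>x^*$ the prescription is to pay an immediate lump dividend of size $x-x^*$, which brings the state to $x^*$ and then applies the reflecting policy: this is precisely the second branch in \eqref{uncon-problembU}.

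The main step, and the only one that is not purely bookkeeping, is the identification $x^*=x_b$ as the unique minimizer of $g'$; everything else is a transcription of the hypotheses and conclusions of \cite[Theorem~4.3]{shreve1984optimal} into our notation, which is why the statement can genuinely be said to \emph{follow directly} from that theorem together with Lemma \ref{canon-sol-lemma}.
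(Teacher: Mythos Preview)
Your proposal is correct and mirrors the paper's approach exactly: the paper does not give a proof but simply states that the result ``follows directly from \cite[Theorem 4.3]{shreve1984optimal} and Lemma \ref{canon-sol-lemma}'', and your write-up is precisely an unpacking of that citation, with the identification of $x^*=x_b$ as the unique minimizer of $g'(\cdot)$ via Lemma \ref{canon-sol-lemma}\ref{canon-sol-lemma:4} being the only substantive step.
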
 
\begin{rem} \cite[Theorem 4.3]{shreve1984optimal} presents the solution to problem \eqref{classical-problem} under, essentially, 
\ref{coeff-assum:2}--\ref{coeff-assum:3} and the following relaxed version of \ref{coeff-assum:4}:
\begin{align}
\mu'(x)\leq r \enskip \mbox{for all $x\geq0$}\tag{A.3'}.\label{rem-about-fixedcostres0:1}
\end{align}
In particular it, essentially, says that:  
if $x_b=0$ (which is equivalent to $\mu(0)\leq0$, see \cite[Lemma 2.2]{paulsen2007optimal}) then the optimal policy is to pay all initial surplus $x$ as a dividend immediately for all $x$; 
if $x_b\in(0,\infty)$, then the solution is as in  Proposition \ref{classical-sol}; 
if $x_b=\infty$ then no optimal policy exists, but the optimal value function can be obtained by considering the value function for a reflection dividend policy at a barrier $b$ and then sending $b\rightarrow \infty$, i.e.
\begin{align}
U(x)=\frac{g(x)}{\lim_{b\rightarrow\infty} g'(b)}.\label{uncon-problembU-infinite}
\end{align}
%(where the limit is finite).
\end{rem} 
Let us explain some of the notation used in the present paper: As used above the derivative of a one-dimensional function $f(\cdot)$ is denoted by $f'(\cdot)$. This notation is also used for the derivative of a multi-dimensional function with respect to the first variable in case it is separated from the other variables with a semi-colon; otherwise the derivative with respect to, say $x$, is indicated by a subindex $x$. By way of example, $J'(x;\underline{x},\bar{x})$ is the derivative of $J(x;\underline{x},\bar{x})$ with respect to $x$ while $A_x(x,y)$ is the derivative of $A(x,y)$ with respect to $x$. We also use the general notation $f(x+):=\lim_{y\searrow x}f(y)$ and $f(x-):=\lim_{y\nearrow x}f(y)$.

\section{Precommitment solution} \label{sec:pre-commitment-solution}
%The precommitment solution corresponds to finding a maximal control policy given an arbitrary but fixed (non-negative) initial data which we denote by $x_0$, cf. Section \ref{sec:intro}.  moment constraint \eqref{constraint2} into account we define the 
We define the precommitment interpretation of the constrained optimal dividend problem as %under the constraint \eqref{constraint2} as
\begin{align}   
V(x_0) = \sup_{S\in \mathcal A(x_0,k)}J(x_0,S), 
\enskip \mbox{where $x_0>0$ is arbitrary but fixed.}\label{pre-commitment-prob}
\end{align}
%and properties of the solution are presented in Corollary \ref{sol-conv-pre} and \added[id=kri,remark={}]{\textbf{[CONT]}}. 
The solution to problem \eqref{pre-commitment-prob} is presented in Theorem \ref{sec:pre-commitment-solution-THM}. Our approach to this constrained problem relies on the Lagrangian idea. If we add the constraint as a penalty term with Lagrange-parameter $\lambda$, the unconstrained optimization problem reads as
\begin{align*}
& J(x_0,S)-\lambda\left(R(x_0,S)-\frac{1}{k}\right)\\
&=\mathbb{E}_{x_0}\left(\sum_{n:\tau_n\leq \tau}e^{-r\tau_n}\zeta_n\right)-\lambda\left(\mathbb{E}_{x_0}\left(\sum_{n:\tau_n\leq \tau}e^{-r\tau_n}\right)-\frac{1}{k}\right)\\
&=\mathbb{E}_{x_0}\left(\sum_{n:\tau_n\leq \tau}e^{-r\tau_n}(\zeta_n-\lambda)\right)-\frac{\lambda}{k}.
\end{align*}
We thus see a natural connection to the well-studied dividend problem in the case a fixed cost $c>0$ is incurred each time a dividend is paid. Therefore, it is not surprising that 
our treatment relies on properties of the fixed cost dividend problem; which in the present setting corresponds to 
\begin{align}
\begin{split}
 \sup_{S\in \mathcal A(x)}H(x;c,S), & \enskip \mbox{where $c>0$ and} \\
 H(x;c,S):&= \mathbb{E}_x\left(\sum_{n:\tau_n\leq \tau}e^{-r\tau_n}(\zeta_n-c)\right)\\ 
    & \left(= J(x;S)-cR(x;S)\right).\label{fixed-cost-prob}
\end{split}
\end{align}
In the case of a constant lump sum dividend barrier policy $(\underline{x},\bar{x})$ we write the function $H(x;c,S)$ as $H(x;c,\underline{x},\bar{x})$ and note that 
\begin{equation}
H(x;c,\underline{x},\bar{x}) = \begin{cases}
g(x) \frac{\bar{x}-\underline{x}-c}{g(\bar{x})-g(\underline{x})}, 					&\ 0 \leq x \leq \bar{x},\\
 x-\underline{x}-c+ g(\underline{x}) \frac{\bar{x}-\underline{x}-c}{g(\bar{x})-g(\underline{x})} ,		& x>\bar{x}. \label{H-function}
\end{cases}
\end{equation}

The solution to problem \eqref{fixed-cost-prob} is presented in Proposition \ref{fix-cost-rem} below; which follows directly from \cite[Theorem 2.1 and Remark 2.2(e)]{paulsen2007optimal} together with Lemma \ref{canon-sol-lemma} (another reference is \cite[Theorem 2.3 and Remark 2.4]{bai2012optimal}); see also Remark \ref{sol-general-fixed-cost-problem}. Recall that \ref{coeff-assum:2}--\ref{coeff-assum:6} are assumed throughout the paper.
%
%\added[id=kri,remark={[Note \cite[Theorem 2.1]{paulsen2007optimal}] assumes that the SDE has a strong sol because he does not assume Lipschitz, but we do so our formulation is correct. }]{}

\begin{prop}\label{fix-cost-rem}  For any fixed $c>0$, a constant lump sum dividend barrier policy independent of $x$ is optimal in \eqref{fixed-cost-prob}. In particular:
\begin{enumerate} [label=(\roman*)]
\item \label{fix-cost-rem:1}
If the (smooth fit) equation system
\begin{align}
H'(\bar{x}-;c,\underline{x},\bar{x})=1, \enskip H'(\underline{x};c,\underline{x},\bar{x})=1, 
\enskip \underline{x}>0,\label{fixed-cost-smoothfit1}
\end{align}
has a solution, denoted by $(\underline{x}_c,\bar{x}_c)$, then it is unique and it is also an optimal constant lump sum dividend barrier policy, i.e. 
\begin{align}
\sup_{S\in \mathcal A(x)}H(x;c,S) = H(x;c,\underline{x}_c,\bar{x}_c), \enskip \mbox{for all $x\geq0$.}
\end{align} 

\item \label{fix-cost-rem:2}
If \eqref{fixed-cost-smoothfit1} does not have a solution then the (smooth fit) equation
\begin{align}
H'(\bar{x}-;c,0,\bar{x})=1, \label{fixed-cost-smoothfit2}
\end{align}
has a unique solution, denoted by $\bar{x}_c$, and the constant lump sum dividend barrier policy 
$(\underline{x}_c,\bar{x}_c)=(0,\bar{x}_c)$ is optimal, i.e.
\begin{align}
\sup_{S\in \mathcal A(x)}H(x;c,S) = H(x;c,0,\bar{x}_c), \enskip \mbox{for all $x\geq0$}.
\end{align}
% 
%\added[id=kri,remark={OLD: [Our assumptions guarantee that the following case is not available]: 3. No optimal policy exists and $\sup_{S\in \mathcal A(x)}H(x;c,S)=\frac{g(x)}{\lim_{b\rightarrow \infty} g'(b)}, $(where this limit is finite).}]{} 
%
%
\end{enumerate} 
  
\end{prop}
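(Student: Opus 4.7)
The proof follows by verification, and the plan is to reduce the claim to \cite[Theorem 2.1 and Remark 2.2(e)]{paulsen2007optimal}, whose framework provides exactly such a verification result for the fixed-cost dividend problem; the task is then to check that our standing Assumption \ref{coeff-assum} matches its hypotheses and to transfer the smooth-fit characterisation to the notation used here.

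First I would collect from Lemma \ref{canon-sol-lemma} the analytic properties of $g$ that the verification argument requires: $g\in C^3(0,\infty)$ with Lipschitz $g'''$, $g(0)=0$, $g'>0$ on $[0,\infty)$, $\lim_{x\to\infty} g'(x)=\infty$, and the single sign change of $g''$ at $x_b$, so that $g$ is strictly concave on $[0,x_b]$ and strictly convex on $[x_b,\infty)$. Combined with the Lipschitz regularity from \ref{coeff-assum:2}, the nondegeneracy \ref{coeff-assum:3}, and the uniform strict inequality $\mu'<r-\varepsilon$ built into \ref{coeff-assum:4}--\ref{coeff-assum:5}, these are precisely the structural hypotheses imposed in \cite{paulsen2007optimal}.

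Next I would construct the candidate via smooth fit. Using the explicit form \eqref{H-function}, on $[\underline{x},\bar{x}]$ one has $H'(x)=g'(x)K$ with $K=(\bar{x}-\underline{x}-c)/(g(\bar{x})-g(\underline{x}))$, so \eqref{fixed-cost-smoothfit1} reduces to the requirement $g'(\underline{x})=g'(\bar{x})=1/K$ together with the compatibility identity defining $K$. Since $g'$ is strictly decreasing on $[0,x_b]$ and strictly increasing on $[x_b,\infty)$ with $g'(\bar{x})\to\infty$, for every $v\in(g'(x_b),g'(0)]$ there is a unique pair $(\underline{x}(v),\bar{x}(v))$ with $0\leq \underline{x}(v)<x_b<\bar{x}(v)$ solving $g'(\underline{x})=g'(\bar{x})=v$; the compatibility relation then becomes a one-variable equation in $v$ whose monotonicity (via strict convexity of $g$ beyond $x_b$) yields uniqueness of the interior solution whenever it exists. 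If no such $v$ delivers $\underline{x}(v)>0$, then the maximiser of $H(x_0;c,\underline{x},\bar{x})$ over $\underline{x}\in[0,\bar{x})$ is attained at $\underline{x}=0$, reducing the smooth-fit system to the single equation \eqref{fixed-cost-smoothfit2}, whose unique solvability again follows from the unimodal shape of $g'$ and $g'(\bar{x})\to\infty$.

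The main obstacle is the global verification step: showing that the candidate $H(\cdot;c,\underline{x}_c,\bar{x}_c)$ is not merely stationary among constant lump-sum barrier policies but actually dominates $H(x;c,S)$ for every admissible impulse policy $S\in\mathcal{A}(x)$. For this the quasi-variational inequality $(A_X-r)H\leq 0$ on $(0,\bar{x}_c)$, the obstacle condition $H(x)\geq\sup_{0<\zeta\leq x}\{H(x-\zeta)+\zeta-c\}$ for all $x\in[0,\infty)$, and complementarity at $\bar{x}_c$ must all be checked. The first is immediate since $H$ is a multiple of $g$ on $[0,\bar{x}_c]$; the obstacle condition follows from the concavity of $H$ on $[\underline{x}_c,\bar{x}_c]$, which uses $\bar{x}_c\leq x_b$ (itself read off from the smooth-fit geometry $g'(\underline{x}_c)=g'(\bar{x}_c)$) and reduces the inner optimisation in $\zeta$ to the stationary point pinned down by $H'(\underline{x}_c)=1$. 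Once the QVI is in hand, applying It\^o to $e^{-rt}H(X_t;c,\underline{x}_c,\bar{x}_c)$ with optional sampling at $\tau$ gives the upper bound $H(x;c,S)\leq H(x;c,\underline{x}_c,\bar{x}_c)$ for every admissible $S$, while equality for $S=(\underline{x}_c,\bar{x}_c)$ yields the reverse inequality and completes the proof.
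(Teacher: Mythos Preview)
Your overall strategy---reduce to \cite[Theorem 2.1 and Remark 2.2(e)]{paulsen2007optimal} and invoke Lemma \ref{canon-sol-lemma} to check the hypotheses---is exactly what the paper does (the paper gives no self-contained proof, only that citation). So the high-level plan is correct and matches the paper.

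However, your supplementary verification sketch contains a genuine error. You claim the obstacle condition follows from ``the concavity of $H$ on $[\underline{x}_c,\bar{x}_c]$, which uses $\bar{x}_c\leq x_b$.'' This is false: the smooth-fit identity $g'(\underline{x}_c)=g'(\bar{x}_c)$ together with the unimodal shape of $g'$ (strictly decreasing on $[0,x_b]$, strictly increasing on $[x_b,\infty)$) forces $\underline{x}_c<x_b<\bar{x}_c$, not $\bar{x}_c\leq x_b$; this is exactly \eqref{sec:pre-commitment-solution:property0}. Consequently $H=Kg$ is concave on $[0,x_b]$ but \emph{convex} on $[x_b,\bar{x}_c]$, so concavity on $[\underline{x}_c,\bar{x}_c]$ fails.

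The correct route to the obstacle inequality is via the derivative: from $H'(\underline{x}_c)=H'(\bar{x}_c-)=1$ and the sign pattern of $H''=Kg''$ one gets $H'\geq 1$ on $[0,\underline{x}_c]$, $H'\leq 1$ on $[\underline{x}_c,\bar{x}_c]$, and $H'=1$ on $(\bar{x}_c,\infty)$. This shows the map $y\mapsto H(y)-y$ is nondecreasing on $[0,\underline{x}_c]$ and nonincreasing on $[\underline{x}_c,\infty)$, so for any $x$ the inner supremum $\sup_{0\leq y\leq x}\{H(y)+x-y-c\}$ is attained at $y=\underline{x}_c$ (or $y=0$ in case \ref{fix-cost-rem:2}), and equals $H(x)$ precisely when $x\geq\bar{x}_c$. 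If you want to retain the sketch rather than simply cite \cite{paulsen2007optimal}, this is the fix you need.
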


Proposition \ref{fix-cost-rem:properties} below presents properties of the solution to problem \eqref{fixed-cost-prob} which we rely on when proving the main results of the present section Theorem \ref{sec:pre-commitment-solution-THM} and Corollary \ref{sol-conv-pre}. Most of these properties have been established before, if not exactly for the problem \eqref{fixed-cost-prob} and setting of the present paper then for similar problems, %and we do not claim any important contribution in this result; 
see Remark \ref{rem-about-fixedcostres}.

\begin{prop}\label{fix-cost-rem:properties} The optimal policy in Proposition \ref{fix-cost-rem} has the following properties:
\begin{align}
& \mbox{$0 \leq  \underline{x}_{c}< x^*<\bar{x}_{c}$} \label{sec:pre-commitment-solution:property0}\\
& \mbox{$\bar{x}_{c}$ is continous and increasing in $c$; $\underline{x}_{c}$ is continous and decreasing in $c$} \label{sec:pre-commitment-solution:property1}\\
& \mbox{$\underline{x}_c,\bar{x}_c\rightarrow x^*>0$ as  $c\searrow 0$} \label{sec:pre-commitment-solution:property2}\\
& \mbox{$H(x;c,\underline{x}_c,\bar{x}_c) \rightarrow U(x)$ as  $c\searrow 0$, for any $x \geq 0$}\label{sec:pre-commitment-solution:property4}\\
& \mbox{$\bar{x}_c>c$ and (hence) $\bar{x}_c\rightarrow \infty$ as  $c\rightarrow \infty$} \label{sec:pre-commitment-solution:property3}\\
& \mbox{there exists a $\bar{c}>0$ such that if $c\geq \bar{c}$ then $(\underline{x}_c,\bar{x}_c)$ is a ruin policy, i.e. $\underline{x}_c=0$} \label{sec:pre-commitment-solution:property3.5}
\end{align}
(recall that $x^*$ and $U(x)$ correspond to the solution to problem \eqref{classical-problem}, cf. Proposition \ref{classical-sol}).
 
\end{prop}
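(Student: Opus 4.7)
The plan is to reduce the smooth-fit system in Proposition \ref{fix-cost-rem}\ref{fix-cost-rem:1} to a tractable pair of equations and then read off all six properties from the shape of $g'(\cdot)$ established in Lemma \ref{canon-sol-lemma}. From \eqref{H-function} one has $H'(x;c,\underline{x},\bar{x})=g'(x)\,K$ on $(0,\bar{x})$, where $K:=(\bar{x}-\underline{x}-c)/(g(\bar{x})-g(\underline{x}))$. Hence in case \ref{fix-cost-rem:1} the two smooth-fit conditions in \eqref{fixed-cost-smoothfit1} collapse to
\[
g'(\underline{x}_c)=g'(\bar{x}_c), \qquad (\bar{x}_c-\underline{x}_c-c)\,g'(\bar{x}_c)=g(\bar{x}_c)-g(\underline{x}_c).
\]
Because $g'$ is strictly decreasing on $[0,x^*]$ and strictly increasing on $[x^*,\infty)$ with unique minimum at $x^*=x_b$ by Lemma \ref{canon-sol-lemma}\ref{canon-sol-lemma:4}, the first identity combined with $\underline{x}_c<\bar{x}_c$ forces $\underline{x}_c<x^*<\bar{x}_c$, proving \eqref{sec:pre-commitment-solution:property0} in case \ref{fix-cost-rem:1}. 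In the boundary case \ref{fix-cost-rem:2} one has $\underline{x}_c=0$, and I would argue $\bar{x}_c>x^*$ by rewriting \eqref{fixed-cost-smoothfit2} as $g'(\bar{x}_c)(\bar{x}_c-c)=g(\bar{x}_c)$ and comparing with the classical identity $g(x^*)=g'(x^*)\,U(x^*)$ arising from Proposition \ref{classical-sol}, using the convexity of $g$ on $[x^*,\infty)$.

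Next, monotonicity and continuity of the map $c\mapsto(\underline{x}_c,\bar{x}_c)$ in case \ref{fix-cost-rem:1} follow from the implicit function theorem applied to the reduced system above. The relevant $2\times 2$ Jacobian in $(\underline{x},\bar{x})$ is non-degenerate because $g''(\underline{x}_c)\neq 0\neq g''(\bar{x}_c)$, which holds by the previous step together with Lemma \ref{canon-sol-lemma}\ref{canon-sol-lemma:4}. Differentiating in $c$ then pins down the signs of $d\underline{x}_c/dc$ and $d\bar{x}_c/dc$ via the signs of $g''$ on each side of $x^*$, giving \eqref{sec:pre-commitment-solution:property1}. Setting $c=0$ in the reduced system forces $\underline{x}=\bar{x}=x^*$ (the only admissible solution, via a l'H\^opital-type limit in $K$), and continuity of the implicit branch yields \eqref{sec:pre-commitment-solution:property2}. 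Property \eqref{sec:pre-commitment-solution:property4} then follows by substituting the converging barriers into \eqref{H-function} and comparing with \eqref{uncon-problembU}, using continuity of $g$ and $g'$ and the identification $g(x^*)/g'(x^*)=U(x^*)$.

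For \eqref{sec:pre-commitment-solution:property3}, I would compare with the trivial strategy $S\equiv 0$: since this is admissible with value zero, any optimal policy satisfies $H(x;c,\underline{x}_c,\bar{x}_c)\geq 0$ on $[0,\bar{x}_c]$, forcing $K\geq 0$ in \eqref{H-function}; excluding the degenerate case $K=0$, strict positivity gives $\bar{x}_c>\underline{x}_c+c\geq c$, whence $\bar{x}_c\to\infty$ as $c\to\infty$. Finally, \eqref{sec:pre-commitment-solution:property3.5} asserts that the interior branch terminates at a finite cost: by \eqref{sec:pre-commitment-solution:property1} the quantity $\underline{x}_c$ is nonincreasing in $c$, and combining $g'(\underline{x}_c)=g'(\bar{x}_c)$ with the divergence $\lim_{x\to\infty}g'(x)=\infty$ from Lemma \ref{canon-sol-lemma}\ref{canon-sol-lemma:3} shows that once $c$ is large enough to force $\bar{x}_c$ beyond the level where $g'(\bar{x}_c)>g'(0)$, the equation $g'(\underline{x}_c)=g'(\bar{x}_c)$ has no solution with $\underline{x}_c>0$; at that cost Proposition \ref{fix-cost-rem}\ref{fix-cost-rem:2} takes over.

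The main obstacle I anticipate is the careful handling of the transition between cases \ref{fix-cost-rem:1} and \ref{fix-cost-rem:2} at $c=\bar c$, ensuring the monotone continuous branch from case \ref{fix-cost-rem:1} matches the ruin-type branch from case \ref{fix-cost-rem:2} continuously at $\underline{x}=0$. A related subtlety is that the argument for $\bar{x}_c>x^*$ in case \ref{fix-cost-rem:2} must avoid circular dependence on \eqref{sec:pre-commitment-solution:property2}; this is achieved by treating case \ref{fix-cost-rem:2} as a stand-alone one-dimensional smooth-fit problem on $\{\underline{x}=0\}$ where monotonicity and continuity in $c$ follow from the same implicit function argument as above but with a single equation.
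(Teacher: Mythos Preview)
Your approach is essentially the paper's: reduce the smooth-fit system to $g'(\underline{x}_c)=g'(\bar{x}_c)$ together with $(\bar{x}_c-\underline{x}_c-c)\,g'(\bar{x}_c)=g(\bar{x}_c)-g(\underline{x}_c)$, use the strict concave--convex shape of $g$ from Lemma \ref{canon-sol-lemma}\ref{canon-sol-lemma:4} to get the ordering \eqref{sec:pre-commitment-solution:property0}, apply the implicit function theorem to the reduced system (and to the single equation in case \ref{fix-cost-rem:2}) for \eqref{sec:pre-commitment-solution:property1}, and read off the limits $c\searrow0$ and $c\to\infty$ from the resulting monotone branches. The paper does exactly this, computing the $2\times2$ Jacobian explicitly and checking its determinant is negative.

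Two minor remarks. For $\bar{x}_c>x^*$ in case \ref{fix-cost-rem:2}, you do not need the identity $g(x^*)=g'(x^*)U(x^*)$: from $g'(\bar{x}_c)=g(\bar{x}_c)/(\bar{x}_c-c)>g(\bar{x}_c)/\bar{x}_c$ and the fact that concavity of $g$ on $[0,x^*]$ with $g(0)=0$ forces $g'(x)\le g(x)/x$ there, one gets $\bar{x}_c>x^*$ directly. And the transition you flag as the main obstacle is handled in the paper precisely as you anticipate: one shows there is a threshold $\bar c$ with case \ref{fix-cost-rem:1} for $c<\bar c$ and case \ref{fix-cost-rem:2} for $c\ge\bar c$, and that the interior branch $(\underline{x}_c,\bar{x}_c)$ converges to $(0,\bar{x}_{\bar c})$ as $c\nearrow\bar c$, via the same secant-slope geometry encoded in your reduced equations.
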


We also need the following result.

\begin{lem} \label{lem2} For any fixed initial surplus $x_0>0$ there exists a unique constant $c(x_0,k)>0$ (depending on  $x_0$ and $k$) such that 
$R(x_0;\underline{x}_c,\bar{x}_c) \geq \frac{1}{k}$ for $c\leq c(x_0,k)$ and 
$R(x_0;\underline{x}_c,\bar{x}_c) \leq \frac{1}{k}$ for $c\geq c(x_0,k)$; where we recall that $(\underline{x}_c,\bar{x}_c)$ is determined in  Proposition \ref{fix-cost-rem}. In particular, 
\begin{align}   
R\left(x_0;\underline{x}_{c(x_0,k)},\bar{x}_{c(x_0,k)}\right)= \frac{1}{k}. \label{condition-for-precom-sol}
\end{align}
\end{lem}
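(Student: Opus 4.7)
The plan is to study $\rho(c) := R(x_0; \underline{x}_c, \bar{x}_c)$ as a continuous function on $(0, \infty)$ and establish (i) continuity, (ii) $\rho(0+) = +\infty$, (iii) $\rho(+\infty) = 0$, and (iv) strict monotonicity (decreasing). Given these four facts, the intermediate value theorem together with strict monotonicity immediately produces the unique $c(x_0, k) > 0$ satisfying \eqref{condition-for-precom-sol}, and the one-sided inequalities in the statement follow.

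Continuity (i) is immediate by composing the continuity of $R(x_0; \cdot, \cdot)$ (Proposition \ref{expl-JR}) with that of $c \mapsto (\underline{x}_c, \bar{x}_c)$ (property \eqref{sec:pre-commitment-solution:property1}). For the limit at $0$, property \eqref{sec:pre-commitment-solution:property2} gives $\underline{x}_c, \bar{x}_c \to x^*$; using the explicit formula \eqref{expl-JR:R}, whichever of its two branches applies (depending on whether $x_0 \leq \bar{x}_c$ or $x_0 > \bar{x}_c$, either of which can occur for small $c$ when $x_0 > x^*$), the factor $1/(g(\bar{x}_c) - g(\underline{x}_c))$ blows up while the relevant numerator $g(x_0)$ or $g(\underline{x}_c) \to g(x^*)$ stays bounded away from zero by Lemma \ref{canon-sol-lemma}\ref{canon-sol-lemma:2} (since $x_0 > 0$ and $x^* > 0$), yielding (ii). For the limit at $\infty$, property \eqref{sec:pre-commitment-solution:property3} gives $\bar{x}_c \to \infty$, so $g(\bar{x}_c) \to \infty$ by Lemma \ref{canon-sol-lemma}\ref{canon-sol-lemma:3}; since $\underline{x}_c \in [0, x^*]$ remains bounded, formula \eqref{expl-JR:R} delivers $\rho(c) \to 0$, i.e.\ (iii).

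The main obstacle is strict monotonicity (iv). Lemma \ref{propR}\ref{propR:part1} provides strict monotonicity of $R(x_0; \cdot, \cdot)$ in each barrier argument, which combined with the (weak) monotonicity in property \eqref{sec:pre-commitment-solution:property1} already forces $\rho$ to be non-increasing. To upgrade this to a strict decrease, I would show that the map $c \mapsto (\underline{x}_c, \bar{x}_c)$ is injective. For this, substituting the derivative of \eqref{H-function} into the smooth fit system \eqref{fixed-cost-smoothfit1} expresses $c$ in terms of the barriers alone in the non-ruin regime (via $g'(\bar{x}_c) = g'(\underline{x}_c)$ together with $c = \bar{x}_c - \underline{x}_c - (g(\bar{x}_c) - g(\underline{x}_c))/g'(\bar{x}_c)$), while the analogous substitution into \eqref{fixed-cost-smoothfit2} yields the ruin-regime relation $c = \bar{x}_c - g(\bar{x}_c)/g'(\bar{x}_c)$. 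Property \eqref{sec:pre-commitment-solution:property3.5} pins down the regime transition, ensuring that the non-ruin barriers ($\underline{x}_c > 0$) and ruin barriers ($\underline{x}_c = 0$) never coincide across regimes. Consequently, for any $c_1 < c_2$ at least one of the weak inequalities $\underline{x}_{c_1} \geq \underline{x}_{c_2}$ and $\bar{x}_{c_1} \leq \bar{x}_{c_2}$ must be strict, and by the strict monotonicity of $R$ in each argument one obtains $\rho(c_1) > \rho(c_2)$, completing the argument.
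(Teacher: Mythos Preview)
Your argument is correct and is precisely the expansion of the paper's own (one-line) proof, which simply points to Lemma~\ref{propR}\ref{propR:part1} together with the properties of $c\mapsto(\underline{x}_c,\bar{x}_c)$ from Proposition~\ref{fix-cost-rem:properties}. Your injectivity detour for strict monotonicity is valid but not strictly needed: the proof of Proposition~\ref{fix-cost-rem:properties} in the appendix actually establishes that $\bar{x}_c$ is \emph{strictly} increasing in $c$ (in both regimes and across the regime boundary), which already forces $\rho$ to be strictly decreasing via Lemma~\ref{propR}\ref{propR:part1}.
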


%For an arbitrary fixed initial surplus $x_0>0$. 
Let the constant lump sum dividend barrier policy determined by Proposition \ref{fix-cost-rem} using the cost $c(x_0,k)$ be denoted by 
$(\ut{x}_k, \tilde{x}_k)$, i.e. let 
\begin{align}
(\ut{x}_k, \tilde{x}_k):=\left(\underline{x}_{c(x_0,k)},\bar{x}_{c(x_0,k)}\right). \label{sec:pre-commitment-solution:eq}
\end{align}
%[REMOVE?: Note that Lemma \ref{lem2} implies that $R\left(x_0;\underline{x}_{c(x_0,k)},\bar{x}_{c(x_0,k)}\right)= \frac{1}{k}$].

\begin{thm} [Precommitment solution] \label{sec:pre-commitment-solution-THM} For any fixed initial surplus $x_0>0$ the constant lump sum dividend barrier policy $(\ut{x}_k, \tilde{x}_k)$ defined in \eqref{sec:pre-commitment-solution:eq} is optimal in \eqref{pre-commitment-prob} and the corresponding optimal precommitment value is 
\begin{equation}
V(x_0) = %J\left(x_0;\ut{x}_k,\tilde{x}_k\right)=
%\begin{cases}
%g(x_0)\frac{\tilde{x}_k-\ut{x}_k}{g\left(\tilde{x}_k\right)-g\left(\ut{x}_k\right)}, 					&\ 0 \leq x_0 \leq \tilde{x}_k,\\
% x-\ut{x}_k+ g\left(\ut{x}_k\right)\frac{\tilde{x}_k-\ut{x}_k}{g\left(\tilde{x}_k\right)-g\left(\ut{x}_k\right)},		& x_0>\tilde{x}_k,
%	\end{cases}
%	\end{equation}
%\begin{equation}
%=
\begin{cases}
\frac{\tilde{x}_k-\ut{x}_k}{k}, 					&\ 0 \leq x_0 \leq \tilde{x}_k,\\
 x_0-\ut{x}_k+ \frac{g\left(\ut{x}_k\right)}{g\left(x_0\right)}\frac{\tilde{x}_k-\ut{x}_k}{k},		& x_0>\tilde{x}_k.\label{precom-value}
	\end{cases}
	\end{equation}
%\begin{equation}
%=\begin{cases}
%g(x_0)\frac{\tilde{x}_k-\ut{x}_k}{g'(\tilde{x}_k)(\tilde{x}_k-\ut{x}_k-c(x_0,k))}, 					&\ 0 \leq x_0 \leq \tilde{x}_k,\\
% x-\ut{x}_k+ g\left(\ut{x}_k\right)\frac{\tilde{x}_k-\ut{x}_k}{g'(\tilde{x}_k)(\tilde{x}_k-\ut{x}_k-c(x_0,k))},		& x_0>\tilde{x}_k. 
%	\end{cases}
%	\end{equation}
	\end{thm}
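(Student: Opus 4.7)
The plan is to use Lagrangian duality, exploiting the identity
\begin{align*}
J(x_0;S) = H(x_0;\lambda,S) + \lambda R(x_0;S)
\end{align*}
for every $\lambda \geq 0$ and every impulse policy $S$. For $S \in \mathcal A(x_0,k)$ the constraint $R(x_0;S) \leq \tfrac{1}{k}$ implies
\begin{align*}
J(x_0;S) \leq H(x_0;\lambda,S) + \tfrac{\lambda}{k} \leq \sup_{S' \in \mathcal A(x_0)} H(x_0;\lambda,S') + \tfrac{\lambda}{k},
\end{align*}
where the second inequality simply drops the constraint. Taking the supremum over $S \in \mathcal A(x_0,k)$ yields the weak-duality bound $V(x_0) \leq \sup_{S' \in \mathcal A(x_0)} H(x_0;\lambda,S') + \tfrac{\lambda}{k}$ for every $\lambda \geq 0$.

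Now I would specialize $\lambda$ to the Lagrange multiplier that closes the duality gap, namely $\lambda = c(x_0,k)$ supplied by Lemma \ref{lem2}. Proposition \ref{fix-cost-rem} identifies the maximizer of $H(x_0;c(x_0,k),\,\cdot\,)$ over $\mathcal A(x_0)$ as the constant lump sum dividend barrier policy $(\ut{x}_k,\tilde{x}_k)$ defined in \eqref{sec:pre-commitment-solution:eq}, and Lemma \ref{lem2} guarantees the key equality $R(x_0;\ut{x}_k,\tilde{x}_k) = \tfrac{1}{k}$, so in particular $(\ut{x}_k,\tilde{x}_k) \in \mathcal A(x_0,k)$ is admissible. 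Writing out the bound:
\begin{align*}
V(x_0) \leq H(x_0;c(x_0,k),\ut{x}_k,\tilde{x}_k) + \tfrac{c(x_0,k)}{k} = J(x_0;\ut{x}_k,\tilde{x}_k) - c(x_0,k)\bigl(R(x_0;\ut{x}_k,\tilde{x}_k) - \tfrac{1}{k}\bigr) = J(x_0;\ut{x}_k,\tilde{x}_k),
\end{align*}
where the last equality uses $R(x_0;\ut{x}_k,\tilde{x}_k) = \tfrac{1}{k}$. Combined with the trivial lower bound $V(x_0) \geq J(x_0;\ut{x}_k,\tilde{x}_k)$ obtained from admissibility of $(\ut{x}_k,\tilde{x}_k)$, this gives optimality.

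For the explicit value I would just plug into Proposition \ref{expl-JR}. In the regime $0 \leq x_0 \leq \tilde{x}_k$ the formulas \eqref{expl-JR:J}--\eqref{expl-JR:R} give the factorization $J(x_0;\ut{x}_k,\tilde{x}_k) = (\tilde{x}_k - \ut{x}_k)\,R(x_0;\ut{x}_k,\tilde{x}_k)$, which collapses to $(\tilde{x}_k - \ut{x}_k)/k$ after substituting the active-constraint identity \eqref{condition-for-precom-sol}. The regime $x_0 > \tilde{x}_k$ is handled analogously using the second branches of \eqref{expl-JR:J}--\eqref{expl-JR:R} together with the same substitution of $R(x_0;\ut{x}_k,\tilde{x}_k) = \tfrac{1}{k}$.

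The main obstacle, and the reason the previous two preparatory results are doing so much work, is the step from "Lagrangian upper bound holds for every $\lambda$" to "there exists a $\lambda$ making the bound tight." This is exactly where Lemma \ref{lem2} is essential: it is not a priori obvious that the one-parameter family of fixed-cost maximizers $(\underline{x}_c,\bar{x}_c)$, indexed by $c > 0$, sweeps through a configuration at which the constraint is hit with equality for the prescribed initial surplus $x_0$, and the existence and uniqueness of $c(x_0,k)$ is precisely the non-trivial input that converts weak duality into strong duality here.
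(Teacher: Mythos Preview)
Your proposal is correct and follows essentially the same Lagrangian duality argument as the paper: both bound $J(x_0;S)$ by $H(x_0;c(x_0,k),S)+c(x_0,k)/k$ via the constraint, invoke Proposition~\ref{fix-cost-rem} to identify the fixed-cost maximizer, and use Lemma~\ref{lem2} (the equality $R(x_0;\ut{x}_k,\tilde{x}_k)=1/k$) to close the gap, with Proposition~\ref{expl-JR} supplying the explicit value.
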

\begin{proof} (of Theorem \ref{sec:pre-commitment-solution-THM}) 
Using e.g. Proposition \ref{expl-JR} it is easy to verify that the right side of \eqref{precom-value} is equal to $J\left(x_0;\ut{x}_k,\tilde{x}_k\right)$, i.e. the value obtained when using the constant lump sum dividend barrier policy $(\ut{x}_k, \tilde{x}_k)$, and the second statement therefore follows from the first statement.

Note that $(\ut{x}_k, \tilde{x}_k) \in \mathcal A (x_0;k)$ by \eqref{condition-for-precom-sol}--\eqref{sec:pre-commitment-solution:eq}. Consider an arbitrary fixed dividend policy $\bar S=(\tau_n,\zeta_n)_{n\geq 1} \in \mathcal A(x_0,k)$. 
Using $c(x_0,k)>0$ (Lemma \ref{lem2}) and the constraint \eqref{constraint2} we obtain
\begin{align}   
J(x_0;\bar S) & \leq J(x_0;\bar S) -c(x_0,k)\left(R(x_0;\bar S)-\frac{1}{k}\right)\\
& = \mathbb{E}_{x_0}\left( \sum_{n:\tau_n\leq \tau}e^{-r\tau_n}\left(\zeta_n-c(x_0,k)\right)\right) +c(x_0,k)\frac{1}{k}.
\end{align}
Now use Proposition \ref{fix-cost-rem} and \eqref{H-function} to see that 
%The first expression in the line above is the value function of an impulse control dividend policy assuming that $c(x_0,k)$ represents a fixed cost that is incurred each time a dividend is paid. The optimal solution to this problem is well-known (see e.g. \cite{oksendal2005applied} around [p.123/86]) and given by [check]
\begin{align}   
\sup_{S \in \mathcal A(x_0)}&\mathbb{E}_{x_0}\left( \sum_{n:\tau_n\leq \tau}e^{-r\tau_n}\left(\zeta_n-c(x_0,k)\right)\right)\\
& = H\left(x_0;{c(x_0,k)},\underline{x}_{c(x_0,k)},\bar{x}_{c(x_0,k)}\right)\\
& = J\left(x_0;\underline{x}_{c(x_0,k)},\bar{x}_{c(x_0,k)}\right)- c(x_0,k)R\left(x_0;\underline{x}_{c(x_0,k)},\bar{x}_{c(x_0,k)}\right).
\end{align}
Hence, using Lemma \ref{lem2} again we obtain
\begin{align}   
J(x_0;\bar S) & \leq 
J\left(x_0;\underline{x}_{c(x_0,k)},\bar{x}_{c(x_0,k)}\right)- c(x_0,k)R\left(x_0;\underline{x}_{c(x_0,k)},\bar{x}_{c(x_0,k)}\right)+c(x_0,k)\frac{1}{k}\\
& = J\left(x_0;\underline{x}_{c(x_0,k)},\bar{x}_{c(x_0,k)}\right).
\end{align}
\end{proof}

\begin{rem} \label{pre-commitment-prob-is-time-incons} The barrier $\tilde{x}_k$ and the dividend $\tilde{x}_k-\ut{x}_k$ in the optimal policy for problem \eqref{pre-commitment-prob}, see Theorem \ref{sec:pre-commitment-solution-THM}, depend on the initial surplus $x_0$ and it is therefore clear that the optimal dividend policy $(\ut{x}_k,\tilde{x}_k)$ chosen at time $0$ will not generally be optimal at a time $t>0$ when the constraint \eqref{constraint2} is updated with the value for the state process observed at $t$, and that the problem of the present paper is in this sense time-inconsistent. 
We remark that Figure \ref{fig-precom2} in Section \ref{example:sec} illustrates how the precommitment value $V(x_0)$ depends on $x_0$ in a specific example. 
We remark that this inconsistency has to do with the discounted reward criterion considered here. If we instead consider the (economically perhaps not too meaningful) long term average criterion
	\[\liminf_{T\to\infty}\frac{1}{T}\mathbb{E}_x\left(D_T\right)\]
	with corresponding constraint 
	\[\limsup_{T\to\infty}\frac{1}{T}\mathbb{E}_{x}\left( \sum_{n:\tau_n\leq T}1\right)\left[=\limsup_{T\to\infty}\frac{1}{T}\mathbb{E}_{x} |\{n:\tau_n\leq T\}|\right] \leq \frac{1}{k},\]
	then the solution to the problem will become independent of the initial state $x$ due to the ergodicity of the situation.  We refer to \cite{christensen2012optimal} for the treatment of a problem of this type in portfolio optimization. 
\end{rem}

\begin{rem} \label{how-to-find-precom-sol} Consider an arbitrary fixed initial surplus $x_0>0$. Relying on e.g. 
Lemma \ref{propR}\ref{propR:part1}, 
Lemma \ref{lem2} and 
Theorem \ref{sec:pre-commitment-solution-THM} 
it is easy to see that the optimal precommitment policy $(\ut{x}_k,\tilde{x}_k)$ can be determined as follows:  
\begin{itemize} 
\item Pick a cost $c_1>0$. 
\item Determine (numerically) $\left(\underline{x}_{c_1},\bar{x}_{c_1}\right)$ according to Proposition \ref{fix-cost-rem}. 
%This amounts to (numerically) solving an equation system involving only elementary functions, cf. \eqref{fixed-cost-prob} and Proposition \ref{expl-JR}.  
\item 
If $R\left(x_0;\underline{x}_{c_1},\bar{x}_{c_1}\right) > \frac{1}{k}$, we know that $c_1$ is too low, i.e. $c_1<c(x_0,k)$, and we set $c_1=c_1^l$ and choose a new $c_2>c^l_1$. 
If $R\left(x_0;\underline{x}_{c_1},\bar{x}_{c_1}\right) < \frac{1}{k}$ then we know that $c_1$ is too high, i.e.  $c_1>c(x_0,k)$, and we set $c_1=c_1^h$ and choose a new $c_2<c^h_1$.

\item Iterate the steps above always choosing (using e.g. the bisection method) $c_{n+1}$ larger than all $c_i^l,i\leq n$ and smaller than all $c_i^h,i\leq n$ until you find a $c$ which attains \eqref{condition-for-precom-sol} i.e. with $R\left(x_0;\underline{x}_c,\bar{x}_c\right) = \frac{1}{k}$; now set 
$(\ut{x}_k,\tilde{x}_k)=\left(\underline{x}_{c},\bar{x}_{c}\right)$, then this is the optimal precommitment policy given the initial surplus $x_0$.
\end{itemize}
We remark that more numerically efficient methods to find the optimal precommitment policy are of course likely to exist. 
\end{rem} 

The following result follows directly from 
Lemma \ref{propR}\ref{propR:part1}, Proposition \ref{fix-cost-rem:properties} and Lemma \ref{lem2}:

\begin{cor} \label{sol-conv-pre} For any fixed initial surplus $x_0>0$ the precommitment solution $(\ut{x}_k,\tilde{x}_k)$ in Theorem \ref{sec:pre-commitment-solution-THM} 
%$\left(\ut{x}_k,\tilde{x}_k\right)$ 
has the following properties: 
\begin{align}
& \mbox{$0 \leq  \ut{x}_k< x^*<\tilde{x}_k$}\label{sol-conv-pre:1} \\
& \mbox{$\tilde{x}_k$ is continous and increasing in $k$; $\ut{x}_k$ is continous and decreasing in $k$}\label{sol-conv-pre:2}\\
& \mbox{$\ut{x}_k,\tilde{x}_k \rightarrow x^*$ as $k \rightarrow 0$}\label{sol-conv-pre:3}\\
& \mbox{$V(x_0) \rightarrow U(x_0)$ as $k \rightarrow 0$}\label{sol-conv-pre:4}\\
& \mbox{$\tilde{x}_k \rightarrow \infty$ as $k \rightarrow \infty$}\label{sol-conv-pre:5}\\
& \mbox{there exists a $\bar{k}>0$ such that if $k\geq \bar{k}$ then $(\ut{x}_k,\tilde{x}_k)$ is a ruin policy, i.e. $\ut{x}_k =0$.}
\end{align}
\end{cor}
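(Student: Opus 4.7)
The plan is to reduce everything to properties of the map $k\mapsto c(x_0,k)$ from Lemma \ref{lem2}, and then compose with the already-established behavior of $c\mapsto(\underline{x}_c,\bar{x}_c)$ from Proposition \ref{fix-cost-rem:properties}. The first property $0\leq\ut{x}_k<x^*<\tilde{x}_k$ is immediate from \eqref{sec:pre-commitment-solution:eq} and \eqref{sec:pre-commitment-solution:property0}.

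For the monotonicity/continuity in \eqref{sol-conv-pre:2}, I would argue as follows. Fix $x_0>0$. By Proposition \ref{fix-cost-rem:properties}, $c\mapsto\underline{x}_c$ is continuous and decreasing while $c\mapsto\bar{x}_c$ is continuous and increasing. Combining this with Lemma \ref{propR}\ref{propR:part1} (which states that $R(x_0;\underline{x},\bar{x})$ is continuous, strictly increasing in $\underline{x}$ and strictly decreasing in $\bar{x}$) shows that $c\mapsto R(x_0;\underline{x}_c,\bar{x}_c)$ is continuous and strictly decreasing. Hence the implicit relation $R(x_0;\underline{x}_{c(x_0,k)},\bar{x}_{c(x_0,k)})=1/k$ forces $k\mapsto c(x_0,k)$ to be continuous and strictly increasing. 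Composing with the monotonicity of $c\mapsto(\underline{x}_c,\bar{x}_c)$ then gives \eqref{sol-conv-pre:2}.

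For the limits \eqref{sol-conv-pre:3} and \eqref{sol-conv-pre:5} I would study the endpoints of $c\mapsto R(x_0;\underline{x}_c,\bar{x}_c)$. As $c\searrow 0$, \eqref{sec:pre-commitment-solution:property2} gives $\underline{x}_c,\bar{x}_c\to x^*$, hence $g(\bar{x}_c)-g(\underline{x}_c)\to 0$, so by \eqref{expl-JR:R} the value $R(x_0;\underline{x}_c,\bar{x}_c)\to\infty$; by the monotone implicit relation this forces $c(x_0,k)\searrow 0$ as $k\to 0$, so $\ut{x}_k,\tilde{x}_k\to x^*$. As $c\to\infty$, \eqref{sec:pre-commitment-solution:property3} gives $\bar{x}_c\to\infty$, so Lemma \ref{propR}\ref{propR:part1} yields $R(x_0;\underline{x}_c,\bar{x}_c)\to 0$; hence $c(x_0,k)\to\infty$ as $k\to\infty$, giving $\tilde{x}_k\to\infty$. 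The ruin-policy claim then follows by picking $\bar k$ so that $c(x_0,\bar k)\geq\bar c$ (with $\bar c$ from \eqref{sec:pre-commitment-solution:property3.5}) and using that $c(x_0,k)$ is increasing in $k$.

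The one step that requires a little more than bookkeeping is \eqref{sol-conv-pre:4}. Since $(\ut{x}_k,\tilde{x}_k)\in\mathcal A(x_0,k)\subset\mathcal A(x_0)$, we have $V(x_0)\leq U(x_0)$ automatically; the task is to get a matching lower bound as $k\to 0$. Using Theorem \ref{sec:pre-commitment-solution-THM} together with the constraint $R(x_0;\ut{x}_k,\tilde{x}_k)=1/k$ and \eqref{expl-JR:J}--\eqref{expl-JR:R}, for small $k$ (so that $x_0\leq\tilde{x}_k$ in the case $x_0\leq x^*$) one obtains
\begin{equation}
V(x_0)=\frac{\tilde{x}_k-\ut{x}_k}{k}=g(x_0)\,\frac{\tilde{x}_k-\ut{x}_k}{g(\tilde{x}_k)-g(\ut{x}_k)},
\end{equation}
and since $\ut{x}_k,\tilde{x}_k\to x^*$ with $\ut{x}_k<x^*<\tilde{x}_k$, the mean value theorem applied to $g$ on $[\ut{x}_k,\tilde{x}_k]$ gives that the right-hand ratio converges to $1/g'(x^*)$, so $V(x_0)\to g(x_0)/g'(x^*)=U(x_0)$. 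For $x_0>x^*$, eventually $\tilde{x}_k<x_0$ and the second branch of \eqref{precom-value} reduces similarly (via the $x_0>\bar{x}$ branch of \eqref{expl-JR:J} and the same mean value argument) to $x_0-x^*+g(x^*)/g'(x^*)=U(x_0)$. The main (mild) obstacle here is handling the two cases $x_0\leq x^*$ and $x_0>x^*$ separately, since which branch of \eqref{precom-value} applies depends on whether $\tilde{x}_k$ has crossed $x_0$; in both cases the mean value theorem, applied to $g$ on the shrinking interval $[\ut{x}_k,\tilde{x}_k]$, is what delivers the limit.
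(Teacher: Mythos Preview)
Your proposal is correct and follows essentially the same route as the paper, which simply asserts that the corollary ``follows directly from Lemma \ref{propR}\ref{propR:part1}, Proposition \ref{fix-cost-rem:properties} and Lemma \ref{lem2}''; you have merely unpacked what ``directly'' means. Your treatment of \eqref{sol-conv-pre:4} via the mean value theorem on $g$ over $[\ut{x}_k,\tilde{x}_k]$ is a clean way to handle the $0/0$ form in the explicit expression \eqref{expl-JR:J}; an alternative one-line argument in the spirit of the paper is to sandwich $H(x_0;c(x_0,k),\ut{x}_k,\tilde{x}_k)\leq V(x_0)\leq U(x_0)$ and invoke \eqref{sec:pre-commitment-solution:property4}.
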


\begin{rem} %Recall that $x^*$ and $U(x)$ correspond to the solution to problem \eqref{classical-problem}, see Proposition \ref{classical-sol}. 
The main interpretations of Corollary \ref{sol-conv-pre} are:
(1) relaxing the constraint \eqref{constraint2} in the sense of sending $k\rightarrow0$ implies that the optimal precommitment solution for the constrained problem \eqref{pre-commitment-prob} converges monotonically to the solution of the unconstrained problem \eqref{classical-problem}, cf. Proposition \ref{classical-sol}, and 
(2) if the constraint is sufficiently restrictive, i.e. if  $k$ is sufficiently large, then a ruin policy is optimal. 
%
%
%(3) making the constraint infinitely restrictive in the sense of sending $k\rightarrow\infty$ implies that the we must wait until an upper boundary that is sent to infinity until paying a dividend.  
%
%
\end{rem}

\begin{rem}\label{sol-general-fixed-cost-problem} In \cite{paulsen2007optimal} problem \eqref{fixed-cost-prob} is solved under essentially \ref{coeff-assum:2}--\ref{coeff-assum:3} %
%
%\added[id=kri,remark={[they do note assume Lipschitz for $\mu$ and $\sigma$...]}]{}
% 
and \eqref{rem-about-fixedcostres0:1}. Under these less restrictive assumptions it may be that neither case \ref{fix-cost-rem:1} nor  case \ref{fix-cost-rem:2} in Proposition \ref{fix-cost-rem} hold; and in this case, according to \cite[Theorem 2.1]{paulsen2007optimal}, it holds that no optimal policy exists, but the optimal value function can be obtained by considering the value function for a reflection dividend policy at a barrier $b$ and then sending $b\rightarrow \infty$, as in \eqref{uncon-problembU-infinite}.
\end{rem}

\begin{rem} \label{rem-about-fixedcostres}
Properties \eqref{sec:pre-commitment-solution:property0} and \eqref{sec:pre-commitment-solution:property1} are in a setting similar to that of the present paper established in \cite[p. 675]{paulsen2007optimal}. 
Property \eqref{sec:pre-commitment-solution:property2} is established in \cite[Remark 2]{jeanblanc1995optimization} for a model based on a Wiener process with drift. 
% see the explanation on \cite[p. 675 ]{paulsen2007optimal} \textbf{[CHECK THIS]}; see also 
%\cite[Example 2.1]{paulsen2007optimal},   
%\cite[Example 6.2]{oksendal2005applied}, 
\end{rem}

\section{Time-consistent solution} \label{sec:time-consistent-solution}
Let us start by defining the notions of a pure dividend strategy and a (pure subgame perfect Nash) equilibrium. % and strong equilibrium are defined in Definition \ref{equilibrium-def} and Definition \ref{strongequilibrium-def}, respectively. 
These definitions are motivated in Section \ref{discussion-eq}, which also contains a discussion of the results in this section. 
%The time-consistent solution (equilibrium) is given in Theorem \ref{sol-equi}. 
%Properties of the equilibrium are stated in Theorem \ref{sol-conv-equi} and Corollary \ref{deriatives-eq-va-func}. The equilibrium is shown to be strong in Theorem \ref{strong-eq-thm}. 

\begin{defi} \label{pure-mix-def} An impulse control policy $S=(\tau_n,\zeta_n)_{n\geq 1}$, see \eqref{condaaa}, is said to be a pure Markov dividend strategy profile if it for each $x\geq 0$ holds that:
\begin{itemize}
\item Each dividend date is an exit time from a set $\mathcal{W} \subseteq [0,\infty)$ that is open in $[0,\infty)$, i.e. 
\begin{align} 
\tau_1=\inf\{t \geq 0: X_t \notin \mathcal{W} \}, \enskip \tau_n=\inf\{t> \tau_{n-1}: X_t \notin \mathcal{W}\}, \enskip n=2,3...
\end{align}
\item Each dividend is given by $\zeta_n =\zeta(X_{\tau_n})$, for some measurable function 
$\zeta(\cdot)$ satisfying $x-\zeta(x)\in \{0\} \cup  \mathcal{W} $ for all $x\notin \mathcal{W}$.
%
%\deleted[id=kri,remark={This didn’t work I think.. since if $\mathcal{W}$ is the union of two disjoint sets then $X$ may end up outside of $\mathcal{W}$ after a dividend?!... also we want to be able to always send the to e.g. $(3,5)\cup \{0\}$...}]{$\zeta:(0,\infty)\rightarrow \mathcal{W}$}
%
\end{itemize}
\end{defi}
From now on we refer to a pure Markov dividend strategy profile as a pure dividend strategy. 
%
%
%
%\deleted[id=kri,remark={redundant I guess}]{Note that for a pure dividend strategy $S$ holds that $\zeta(x)\leq x$ for each $x>0$.}  
%
%
%
%\begin{rem} If a dividend policy $S=(\tau_n,\zeta_n)_{n\geq 1}$ is defined so that $\tau_n$ is the exit time from $[0,\infty)$ for some $n$ then $\tau_n=\tau_{n+1}=...=\infty$ and the interpretation is that no more than $n-1$ dividends will be paid.
%F\end{rem}

\begin{defi} [Equilibrium] \label{equilibrium-def} A pure dividend strategy $\hat S$ is said to be a (pure subgame perfect Nash) equilibrium if for all $x>0$: 
\begin{align}
& \mbox{$\hat S \in \mathcal A(x,k)$},\tag{EqI}\label{eq:i}\\ %$\hat S$ satisfies the constraint \eqref{constraint2}
\tag{EqII}\label{eq:ii} 
\begin{split}
& \mbox{for the process $X$ satisfying \eqref{state-process} with $D_t=0$ for all $t\geq0$ it holds that} \\
& \quad \quad  \quad   \quad \quad \quad \liminf_{h \searrow 0 }\frac{J\left(x;\hat S \right)-\mathbb{E}_x\left(e^{-r\tau_h}J\left(X_{\tau_h};\hat S\right)\right)}{\mathbb{E}_x\left(\tau_h\right)}\geq 0, 
\end{split}\\
& \mbox{for all $y\in[0,x]$: if $R\left(y;\hat S\right)\leq \frac{1}{k}-1$ then }J\left(x;\hat S\right) \geq J\left(y;\hat S\right) + x-y. \tag{EqIII}\label{eq:iii}
\end{align}
Moreover, $J\left(\cdot;\hat S \right)$ is said to be the equilibrium value function (corresponding to $\hat S$).
\end{defi}
%
%\deleted[id=kri,remark={no need to state this I guess}]{Note that the condition \eqref{eq:i} is satisfied when \eqref{constraint2} is satisfied (recall that $\hat S\in \mathcal A (x)$ by the definition, cf. \eqref{condaaa} and Definition \ref{pure-mix-def}).}  
%
%Recall that these definitions are motivated and discussed in Section \ref{discussion-eq}.

\begin{rem} \label{equi-k-bigger-than-1} If $k>1$ then the only strategy that satisfies \eqref{eq:i} for each $x$ is to never pay dividends%(i.e $\hat S=(\tau_n,\zeta_n)_{n\geq 1}$ with $\tau_1=\tau_2=...= \infty$)
; and it is easy to see that this is the unique equilibrium in this case.
\end{rem}
With the remark above in mind we assume in the rest of this section that
\begin{align}
k\leq 1.\tag{A.6}
\end{align}  
Recall also that \ref{coeff-assum:2}--\ref{coeff-assum:6} are assumed throughout the paper. We need the following result. 

\begin{lem} \label{lemma-equi} \quad
\begin{enumerate} [label=(\roman*)]
\item \label{lemma-equi:part1}
The equation system
\begin{align}
R\left({\bar{x}};{\underline{x}},{\bar{x}}\right) &= \frac{1}{k}\label{eq-eq2:constraint} \\
J'\left({\bar{x}}-;{\underline{x}},{\bar{x}}\right) &= 1\label{eq-eq2:smooth-fit}
\end{align} 
has a unique solution $(\underline{x},\bar{x})$, for which it holds that $0\leq \underline{x}<x^*<\bar{x}$. Moreover, if $k=1$ then $\underline{x}=0$ and if $k<1$ then $\underline{x}>0$.
\item \label{lemma-equi:part2}
Let $(\underline{x},\bar{x})$ be the unique solution to \eqref{eq-eq2:constraint}--\eqref{eq-eq2:smooth-fit}. Then, $\bar{x}$ is determined by the (smooth fit) equation 
\begin{align}
J'\left({\bar{x}}-;\bar{x} -k\frac{g(\bar{x})}{g'(\bar{x})},{\bar{x}}\right) = 1,\label{lemma-equi2:1} 
\end{align} 
and
\begin{align}
\underline{x} = \bar{x}-k\frac{g(\bar{x})}{g'(\bar{x})}\label{lemma-equi2:eq1}.
\end{align}

\item  \label{lemma-equi:part3} Equation \eqref{lemma-equi2:1} simplifies to
\begin{align}
g\left(\bar{x} -k\frac{g(\bar{x})}{g'(\bar{x})}\right) = (1-k)g(\bar{x}). \label{lemma-equi2:1-ver2}
\end{align}
\end{enumerate}

\end{lem}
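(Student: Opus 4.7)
The plan is to derive parts (ii) and (iii) as direct algebraic consequences of the system \eqref{eq-eq2:constraint}--\eqref{eq-eq2:smooth-fit}, and then to establish part (i) by reducing to a one-dimensional problem parametrised by $\underline{x}$. From Proposition \ref{expl-JR}, \eqref{eq-eq2:constraint} evaluated at $x=\bar{x}$ reads $g(\bar{x})/(g(\bar{x}) - g(\underline{x})) = 1/k$, equivalent to $g(\bar{x}) - g(\underline{x}) = k g(\bar{x})$, i.e.\ $g(\underline{x}) = (1-k) g(\bar{x})$. Differentiating $J^0$ gives $J'(\bar{x}-;\underline{x},\bar{x}) = g'(\bar{x})(\bar{x}-\underline{x})/(g(\bar{x})-g(\underline{x}))$, so \eqref{eq-eq2:smooth-fit} becomes $g'(\bar{x})(\bar{x}-\underline{x}) = k g(\bar{x})$, which upon solving for $\underline{x}$ yields \eqref{lemma-equi2:eq1}, proving (ii); substituting \eqref{lemma-equi2:eq1} into $g(\underline{x})=(1-k)g(\bar{x})$ yields \eqref{lemma-equi2:1-ver2}, proving (iii), and this in turn is exactly \eqref{lemma-equi2:1}.

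For (i), I fix $\underline{x} \in [0, x^*)$ and define $\Psi_{\underline{x}}(\bar{x}) := g(\bar{x}) - g(\underline{x}) - g'(\bar{x})(\bar{x}-\underline{x})$, so that the smooth-fit condition becomes $\Psi_{\underline{x}}(\bar{x}) = 0$. Observe that $\Psi_{\underline{x}}(\underline{x}) = 0$ and $\Psi_{\underline{x}}'(\bar{x}) = -g''(\bar{x})(\bar{x}-\underline{x})$. Lemma \ref{canon-sol-lemma}\ref{canon-sol-lemma:4} then implies $\Psi_{\underline{x}}$ strictly increases on $(\underline{x}, x^*)$ from $0$ to a positive value and strictly decreases on $(x^*, \infty)$. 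Using convexity of $g$ on $(x^*,\infty)$ to bound $g(\bar{x}) \leq g(x^*) + g'(\bar{x})(\bar{x} - x^*)$, together with $g'(\bar{x})\to \infty$ from Lemma \ref{canon-sol-lemma}\ref{canon-sol-lemma:3}, one obtains $\Psi_{\underline{x}}(\bar{x}) \leq g'(\bar{x})(\underline{x}-x^*) + g(x^*)-g(\underline{x}) \to -\infty$. The intermediate value theorem gives a unique $\bar{x}(\underline{x}) > x^*$ with $\Psi_{\underline{x}}(\bar{x}(\underline{x})) = 0$; the implicit function theorem shows $\bar{x}(\cdot)$ is $C^1$ with $\bar{x}'(\underline{x}) = (g'(\bar{x}(\underline{x}))-g'(\underline{x}))/(g''(\bar{x}(\underline{x}))(\bar{x}(\underline{x})-\underline{x}))$, and strict convexity of $g$ on $[x^*,\infty)$ forces $\bar{x}(\underline{x}) \to x^*$ as $\underline{x} \nearrow x^*$.

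Finally, set $Q(\underline{x}) := g(\underline{x}) - (1-k) g(\bar{x}(\underline{x}))$, so that zeros of $Q$ in $[0,x^*)$ correspond bijectively to solutions of the original system. The boundary values are $Q(0) = -(1-k) g(\bar{x}(0)) \leq 0$ (with equality iff $k=1$) and $Q(x^*-) = k g(x^*) > 0$, while $Q'(\underline{x}) = g'(\underline{x}) - (1-k) g'(\bar{x}(\underline{x}))\bar{x}'(\underline{x})$. The intermediate value theorem and strict monotonicity of $Q$ then yield a unique root $\underline{x}^* \in [0,x^*)$, equal to $0$ when $k=1$ and strictly positive when $k<1$, which finishes (i). The main obstacle is establishing $\bar{x}'(\underline{x}) < 0$, which is what makes $Q' > 0$ and hence uniqueness work; this sign is not visible from the implicit formula alone and I would get it from the mean value theorem applied to the chord identity $g'(\bar{x}) = (g(\bar{x})-g(\underline{x}))/(\bar{x}-\underline{x})$, which produces some $\xi \in (\underline{x}, \bar{x})$ with $g'(\xi) = g'(\bar{x})$; the alternative $\xi \in (x^*, \bar{x})$ is excluded by the strict increase of $g'$ on $(x^*,\infty)$, so $\xi \in (\underline{x}, x^*)$, whence $g'(\bar{x}) = g'(\xi) < g'(\underline{x})$ by the strict decrease of $g'$ on $(0,x^*)$, yielding $\bar{x}'(\underline{x}) < 0$.
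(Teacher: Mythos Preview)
Your argument is correct and follows essentially the same route as the paper: rewrite the smooth-fit condition as the chord--tangent identity $g'(\bar{x}) = (g(\bar{x})-g(\underline{x}))/(\bar{x}-\underline{x})$, show this defines a strictly decreasing map $\underline{x}\mapsto\bar{x}(\underline{x})$ from $[0,x^*)$ into $(x^*,\infty)$, and then use monotonicity together with the intermediate value theorem on the constraint to locate $\underline{x}$ uniquely. Your treatment via the auxiliary functions $\Psi_{\underline{x}}$ and $Q$, and the mean-value-theorem argument for the sign of $\bar{x}'(\underline{x})$, is in fact more explicit than the paper's, which simply asserts the monotonicity of $\bar{x}(\cdot)$ from the concave--convex shape of $g$ and phrases the final step as a bisection schedule based on Lemma~\ref{propR}. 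One small omission: you restrict from the outset to $\underline{x}\in[0,x^*)$ without saying why no solution can have $\underline{x}\geq x^*$; this follows at once from your own $\Psi$ analysis (for $\underline{x}\geq x^*$ one has $\Psi'_{\underline{x}}(\bar{x})=-g''(\bar{x})(\bar{x}-\underline{x})<0$ on $(\underline{x},\infty)$, so $\Psi_{\underline{x}}(\bar{x})<\Psi_{\underline{x}}(\underline{x})=0$ for all $\bar{x}>\underline{x}$), but it should be stated for completeness.
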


\begin{thm} [Time-consistent solution] \label{sol-equi} \quad
\begin{enumerate}[label=(\roman*)]

\item \label{sol-equi:it1} The constant lump sum dividend barrier strategy $(\utt x_k,\hat x_k)$, where $\hat x_k$ is determined by the (smooth fit) equation \eqref{lemma-equi2:1} and 
$\utt x_k = \hat x_k-k\frac{g(\hat x_k)}{g'(\hat x_k)}$, is an equilibrium.  %Moreover, $0\leq\utt x_k<x^*<\hat x_k$.

\item\label{sol-equi:it3} The equilibrium value function is given by

%\begin{equation}
%=
%\begin{cases}
%g(x)\frac{\hat x_k-\utt x_k}{g(\hat x_k)-g(\utt x_k)}, 					&\ 0 \leq x \leq \hat x_k,\\
% x-\utt x_k+ g(\utt x_k)\frac{\hat x_k-\utt x_k}{g(\hat x_k)-g(\utt x_k)},		& x>\hat x_k. 
%	\end{cases}
%	\end{equation}
\begin{equation}
J(x;\utt x_k,\hat x_k) =
\begin{cases}
\frac{g(x)}{g'(\hat x_k)}, 					&\ 0 \leq x \leq \hat x_k,\\
 x-\utt x_k+ \frac{g(\utt x_k)}{g'(\hat x_k)},		& x>\hat x_k. \label{equilib-value-func}
	\end{cases}
	\end{equation}
%\begin{equation}
%=
%\begin{cases}
%g(x)\frac{\hat x_k-\utt x_k}{kg(\hat x_k)}, 					&\ 0 \leq x \leq \hat x_k,\\
% x-\utt x_k+ g(\utt x_k)\frac{\hat x_k-\utt x_k}{kg(\hat x_k)},		& x>\hat x_k. 
%	\end{cases}
%	\end{equation}
	\end{enumerate}
\end{thm}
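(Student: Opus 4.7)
My plan is to dispatch part (ii) first as a direct specialisation of Proposition \ref{expl-JR}, and then to verify conditions \eqref{eq:i}--\eqref{eq:iii} for the policy $(\utt x_k,\hat x_k)$ produced by Lemma \ref{lemma-equi}. For (ii), the key identities are $g(\hat x_k)-g(\utt x_k)=kg(\hat x_k)$, obtained from \eqref{eq-eq2:constraint} through the $R$-formula in Proposition \ref{expl-JR}, and $(\hat x_k-\utt x_k)g'(\hat x_k)=g(\hat x_k)-g(\utt x_k)$, which is the smooth fit $J'(\hat x_k-;\utt x_k,\hat x_k)=1$ spelled out using the $J$-formula in Proposition \ref{expl-JR}. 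Substituting these into Proposition \ref{expl-JR} collapses the factor $(\bar x-\underline x)/(g(\bar x)-g(\underline x))$ to $1/g'(\hat x_k)$ and yields \eqref{equilib-value-func}.

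For \eqref{eq:i} the same identities give $R(x;\utt x_k,\hat x_k)=g(x)/(kg(\hat x_k))$ for $0\leq x\leq \hat x_k$, which is strictly increasing to $1/k$ at $\hat x_k$, while for $x>\hat x_k$ we have $R(x;\utt x_k,\hat x_k)=1+(1-k)/k=1/k$, using $g(\utt x_k)=(1-k)g(\hat x_k)$. For \eqref{eq:ii}, It\^o applied to $e^{-rt}J(X_t;\hat S)$ under the uncontrolled dynamics reduces the liminf inequality to the pointwise statement $A_X J(\cdot;\hat S)(x)-rJ(x;\hat S)\leq 0$. This vanishes on $(0,\hat x_k)$ because $J=g/g'(\hat x_k)$ solves \eqref{ODE1}; on $[\hat x_k,\infty)$ the function $J$ is affine with slope $1$, so the expression reduces to $\mu(x)-rJ(x;\hat S)$. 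Since Lemma \ref{lemma-equi} gives $\hat x_k>x^*=x_b$, Lemma \ref{canon-sol-lemma}\ref{canon-sol-lemma:4} yields $g''(\hat x_k)>0$, so at $x=\hat x_k$ the ODE produces $-\tfrac12\sigma^2(\hat x_k)g''(\hat x_k)/g'(\hat x_k)<0$, and \ref{coeff-assum:4} propagates non-positivity to all $x>\hat x_k$.

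The main work is \eqref{eq:iii}. Using the $R$-formula just derived, the trigger $R(y;\hat S)\leq 1/k-1$ is equivalent to $g(y)\leq g(\utt x_k)$, i.e.\ $y\leq \utt x_k$. So I must show $J(x;\hat S)-J(y;\hat S)\geq x-y$ for all $y\leq \utt x_k$ and $x\geq y$. The decisive auxiliary estimate is $g'(\utt x_k)\geq g'(\hat x_k)$: the mean value theorem applied to $(\hat x_k-\utt x_k)g'(\hat x_k)=g(\hat x_k)-g(\utt x_k)$ produces $\xi\in(\utt x_k,\hat x_k)$ with $g'(\xi)=g'(\hat x_k)$, and since $g'$ is strictly increasing on $[x^*,\infty)$ by Lemma \ref{canon-sol-lemma}\ref{canon-sol-lemma:4}, $\xi$ must lie in $(\utt x_k,x^*)$ where $g'$ is decreasing, whence $g'(\utt x_k)\geq g'(\xi)=g'(\hat x_k)$. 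With this fact the cases $y\leq x\leq \utt x_k$ and $x>\hat x_k$ both reduce, by concavity of $g$ on $[0,x^*]$, to the tangent bound $g(b)-g(a)\geq g'(\hat x_k)(b-a)$ on subintervals of $[0,\utt x_k]$.

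I expect the main obstacle to be the intermediate case $\utt x_k<x\leq \hat x_k$, where $g'$ dips below $g'(\hat x_k)$ around $x^*$ so the tangent bound does not follow from any pointwise comparison. I would handle it by studying $h(x):=g(x)-g(\utt x_k)-g'(\hat x_k)(x-\utt x_k)$, which satisfies $h(\utt x_k)=0$, $h(\hat x_k)=0$ (the latter by the smooth fit), and has $h'(x)=g'(x)-g'(\hat x_k)$ with $h'(\utt x_k)\geq 0$, $h'(x^*)<0$, $h'(\hat x_k)=0$. The sign pattern of $g''$ from Lemma \ref{canon-sol-lemma}\ref{canon-sol-lemma:4} forces $h'$ to change sign exactly once on $[\utt x_k,\hat x_k]$, so $h$ first increases and then decreases with vanishing boundary values, giving $h\geq 0$ throughout $[\utt x_k,\hat x_k]$ and closing \eqref{eq:iii}.
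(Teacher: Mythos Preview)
Your proposal is correct and follows essentially the same route as the paper. For \eqref{eq:i} and part (ii) the paper simply invokes Proposition \ref{expl-JR} and Lemma \ref{lemma-equi}, which is what your identities $g(\hat x_k)-g(\utt x_k)=kg(\hat x_k)$ and $(\hat x_k-\utt x_k)g'(\hat x_k)=g(\hat x_k)-g(\utt x_k)$ unpack. For \eqref{eq:ii} both arguments show $(A_X-r)J\leq 0$ pointwise and then integrate; the paper is a touch more careful in that it invokes a \emph{generalized} It\^o formula (with the indicator $I_{\{X_s\neq \hat x_k\}}$) because $J''$ jumps at $\hat x_k$, a technical point you glossed over but which does not affect the conclusion. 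For \eqref{eq:iii} the paper works with $K(x)=J(x;\hat S)-x$ and asserts, from $K(\utt x_k)=K(\hat x_k)$, $K'(\hat x_k)=0$ and the sign of $K''$, that $K$ is increasing on $[0,\utt x_k)$ and $K(x)\geq K(y)$ thereafter; your function $h$ is just $g'(\hat x_k)\bigl(K(\cdot)-K(\utt x_k)\bigr)$, and your explicit mean-value argument for $g'(\utt x_k)\geq g'(\hat x_k)$ is exactly what underlies the paper's unproved claim that $K$ is increasing on $[0,\utt x_k)$.
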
 
 
\begin{proof} (of Theorem \ref{sol-equi}) Item \ref{sol-equi:it3} follows from \ref{sol-equi:it1}, Proposition \ref{expl-JR}  and Lemma \ref{lemma-equi}. Let us prove \ref{sol-equi:it1}: 
Condition \eqref{eq:i} (in Definition \ref{equilibrium-def}) is directly verified for each $x$ using that $(\utt x_k,\hat x_k)$ satisfies \eqref{eq-eq2:constraint} and $\utt x_k\geq0$ (Lemma \ref{lemma-equi}). %; see e.g. Lemma \ref{lemma-equi} and note also that the left hand side in \eqref{constraint} is smaller than $1$ --- and therefore smaller than $\frac{1}{k}$ --- for $(0,\hat x_k)$.
 By Lemma \ref{lemma-equi} holds 
\begin{align}
J'\left(\hat x_k-;\utt x_k,\hat x_k\right)
=(J^0)'\left(\hat x_k;\utt x_k,\hat x_k\right)
=J'\left(\hat x_k+;\utt x_k,\hat x_k\right)
=1.\label{mainthmeq1}
\end{align} 
Using e.g. \eqref{expl-JR:J}, \eqref{star-def} and Lemma \ref{canon-sol-lemma}  it is easy to verify that 
\begin{align}
\left(A_X-r\right)J^0\left(\hat x_k;\utt x_k,\hat x_k\right)=0, \mbox{ and } 
(J^0)''\left(x;\utt x_k,\hat x_k\right)\geq 0 \mbox{ for } x\geq x^*.\label{mainthmeq2}
\end{align}
Recall that $\hat x_k> x^*$ (Lemma \ref{lemma-equi}) and that $r>0$. Using the above and also $\mu'(x)-r\leq 0$ (Assumption \ref{coeff-assum}) we find that 
for any $x >\hat x_k$ it holds that %this is easy to see if we consider the expression below as a function of $x$ which is then this a decreasing function, and the statement follows
\begin{align}
&\mu(x)- r\left( x- \hat x_k + J\left(\hat x_k;\utt x_k,\hat x_k \right)\right)\\
&\leq \mu\left(\hat x_k\right)-rJ\left(\hat x_k;\utt x_k,\hat x_k \right)\\
&<\mu\left(\hat x_k\right)(J^0)'\left(\hat x_k;\utt x_k,\hat x_k\right)
+ \frac{1}{2}\sigma^2\left(\hat x_k\right)(J^0)''\left(\hat x_k;\utt x_k,\hat x_k\right)
-rJ\left(\hat x_k;\utt x_k,\hat x_k \right)\\
&= \left(A_X-r\right)J^0\left(\hat x_k;\utt x_k,\hat x_k \right)= 0.
\end{align}% 
Using the observations above and \eqref{expl-JR:J} we obtain 
\begin{equation}
\left(A_X-r\right)J\left(x;\utt x_k,\hat x_k \right)  =
\begin{cases} 
0, 					&\ 0 \leq x < \hat x_k,\\
\mu(x)-r\left(x-\utt x_k+J^0\left(\utt x_k;\utt x_k,\hat x_k \right)\right),		& x>\hat x_k,
	\end{cases}
	\end{equation}
\begin{equation}
\quad \quad \quad \quad \quad \quad \quad \quad \quad \enskip  = \begin{cases} 
0, 					&\ 0 \leq x < \hat x_k,\\
\mu(x)-r\left(x-\hat x_k+J^0\left(\hat x_k;\utt x_k,\hat x_k \right)\right),		& x>\hat x_k,
	\end{cases}
\end{equation}
\begin{align}
&\leq 0.\quad\quad\quad\quad\quad\quad\quad\quad\quad\quad\enskip\enskip\enskip\enskip\enskip
\end{align}
Using also a generalized It\^{o} formula, see e.g. \cite[Section 3.5]{peskir2006optimal}, we find that the numerator in \eqref{eq:ii} satisfies 
%
%\added[id=kri,remark={...}]{I guess it must in \eqref{eq:ii} must be clarified that no dividends are paid on the interval $[0,\tau_h]$, right!? (or more specifically that $X_{\tau_h}$ has not been affected by a dividend payment...[we could after \eqref{eq:ii} write  "where $X$ is given by (1.1) for D:=0" or smth...]), then the below is correct I guess?} 
%\added[id=kri,remark={OLD comment: [I guess here we forgot that $X$ has a jump part.... fix this... and then the ref to \cite[Section 3.5]{peskir2006optimal} is not good also (write generalized It\^{o}-Tanaka formula)... (which I guess it should be mentioned is applied to the right continous version of $X$.. makes sure this is inline with how this is formulated in the beginning of the proof of Prop 2.4...]}]{}
%
\begin{align}
& J\left(x;\utt x_k,\hat x_k \right)- \mathbb{E}_x\left(e^{-r\tau_h}J\left(X_{\tau_h};\utt x_k,\hat x_k\right)\right)\\
& = -\mathbb{E}_x\left(\int_0^{\tau_h}I_{\{X_s\neq \hat x_k\}}e^{-rs}\left(A_X-r\right)J\left(X_s;\utt x_k,\hat x_k\right)ds\right)\\
& \geq 0, \enskip \mbox{for any $h>0$ and any $x>0$.}
\end{align}
This implies that \eqref{eq:ii} holds for each $x$. 
Use that 
$R\left(\hat x_k;\utt x_k,\hat x_k\right)=\frac{1}{k}$, 
$R\left(\utt x_k;\utt x_k,\hat x_k\right)+1=\frac{1}{k}$  
and $R\left(y;\utt x_k,\hat x_k\right)$ is increasing in $y$ to see that
\begin{align}
R\left(y;\utt x_k,\hat x_k\right) \leq \frac{1}{k}-1 \Rightarrow y \leq \utt x_k. \label{mainthmeq3}
\end{align}
Let 
$K(x):= J\left(x;\utt x_k,\hat x_k\right)-x$. It is easy to verify that 
$K(0)=0$, 
$K(\utt x_k)=K(\hat x_k)$, 
$K'(\hat x_k)=0$,
$K''(x)<0$ for $x<x^*$ and $K''(x)>0$ for $x>x^*$. Using also that $\utt x_k<x^*<\hat x_k$ it is easy to see that $K(\cdot)$ is increasing on $[0,\utt x_k)$ and that 
%(and draw a picture). %
%, $J'\left(x;\utt x_k,\hat x_k\right)>1$ for $x < \utt x_k$. 
%
%
\begin{align}
\label{mainthmeq4}
\begin{split}
&\mbox{if $0\leq y \leq \utt x_k$ and $x \geq y$ then:}\\
& K(x)-K(y) = J\left(x;\utt x_k,\hat x_k\right)-x - \left(J\left(y;\utt x_k,\hat x_k\right)-y\right) \geq 0.
\end{split}
\end{align}
From \eqref{mainthmeq3} and \eqref{mainthmeq4} it follows that \eqref{eq:iii} holds.
%for any $x\leq \hat x_k$. Now note that since \eqref{eq:iii}	holds for any $x\leq \hat x_k$ it directly follows that \eqref{eq:iii} holds for any $x>\hat x_k$, since $x>\hat x_k$ implies that $J\left(x;\utt x_k,\hat x_k\right)= J\left(\utt x_k;\utt x_k,\hat x_k\right) + x- \utt x_k$. Hence, \eqref{eq:iii} holds for each $x$. 
\end{proof}
 
\begin{rem} \label{how-to-find-equi-sol} The equilibrium $(\utt x_k,\hat x_k)$ can be found  as follows. 
First, find the unique solution to \eqref{lemma-equi2:1-ver2} and set $\hat x_k$ equal to this solution. This ensures that the smooth fit equilibrium condition \eqref{lemma-equi2:1} is satisfied. Second, set $\utt x_k = \hat x_k - k \frac{g(\hat x_k)}{g'(\hat x_k)}$. %, cf. \eqref{lemma-equi2:1}.
\end{rem}

The following definition corresponds to an adaptation of the notion of a strong equilibrium, see Section \ref{discussion-eq} for a motivation. Note that a strong equilibrium is necessarily an equilibrium in the sense of Definition \ref{equilibrium-def}. 

%We also consider the following version alternative version of equilibrium condition \eqref{eq:ii} (in Definition \ref{strongequilibrium-def} .

 \begin{defi}[Strong equilibrium] \label{strongequilibrium-def} An equilibrium $\hat S$ is strong if condition \eqref{eq:ii} in Definition \ref{pure-mix-def} can be replaced by:
\begin{align}
\tag{EqII'}\label{eq:ii'}
\begin{split}
& \mbox{for the process $X$ satisfying \eqref{state-process} with $D_t=0$ for all $t\geq0$ there exists} \\
& \mbox{an $\bar{h}>0$ s.t. } J\left(x;\hat S \right)\geq\mathbb{E}_x\left(e^{-r\tau_h}J\left(X_{\tau_h};\hat S\right)\right), \enskip \mbox{for all $h\in [0,\bar{h}]$.}
\end{split}
\end{align}
\end{defi}
It is easy to see that the arguments that imply that condition \eqref{eq:ii} holds in the proof of Theorem \ref{sol-equi} also imply that the stronger condition \eqref{eq:ii'} holds and we therefore obtain:

\begin{thm} \label{strong-eq-thm} 
The equilibrium $(\utt x_k,\hat x_k)$ in Theorem \ref{sol-equi} is strong.
\end{thm}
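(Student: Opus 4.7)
The plan is to observe that the work done in proving Theorem \ref{sol-equi} already yields the stronger statement, essentially for free. Conditions \eqref{eq:i} and \eqref{eq:iii} are identical in the definitions of an equilibrium and a strong equilibrium, so by Theorem \ref{sol-equi} they hold for $(\utt x_k, \hat x_k)$ without any further work. Everything therefore reduces to verifying \eqref{eq:ii'}.

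The central observation is that in the proof of Theorem \ref{sol-equi}, the generalized It\^{o} formula was applied to obtain, for \emph{every} $h>0$ and every $x>0$,
\begin{align*}
J(x;\utt x_k,\hat x_k) - \mathbb{E}_x\!\left(e^{-r\tau_h} J(X_{\tau_h};\utt x_k,\hat x_k)\right) = -\mathbb{E}_x\!\left(\int_0^{\tau_h} I_{\{X_s\neq \hat x_k\}} e^{-rs}(A_X - r)J(X_s;\utt x_k,\hat x_k)\,ds\right).
\end{align*}
In the proof of Theorem \ref{sol-equi} it was shown that $(A_X-r)J(\,\cdot\,;\utt x_k,\hat x_k) = 0$ on $[0,\hat x_k)$ and $(A_X-r)J(\,\cdot\,;\utt x_k,\hat x_k)\leq 0$ on $(\hat x_k,\infty)$. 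Hence the integrand is non-positive almost surely, and the right-hand side is non-negative for every $h>0$. This inequality is precisely condition \eqref{eq:ii'} with the choice of any $\bar h > 0$ (for instance $\bar h = 1$); it does not merely hold in a liminf sense as $h\searrow 0$.

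I would structure the proof as a single short paragraph: (i) cite Theorem \ref{sol-equi} for \eqref{eq:i} and \eqref{eq:iii}; (ii) reproduce the displayed identity from the proof of Theorem \ref{sol-equi}; (iii) recall the sign information on $(A_X-r)J(\,\cdot\,;\utt x_k,\hat x_k)$ established there; (iv) conclude that the right-hand side is non-negative for every $h>0$, so \eqref{eq:ii'} holds with any $\bar h>0$, and $(\utt x_k,\hat x_k)$ is therefore a strong equilibrium.

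There is really no obstacle here; the only conceptual point worth emphasizing is that the It\^{o} computation never relied on $h$ being small --- the pointwise sign of $(A_X-r)J(\,\cdot\,;\utt x_k,\hat x_k)$ was enough --- so the passage from \eqref{eq:ii} to \eqref{eq:ii'} requires no new analytical input beyond what is already in the proof of Theorem \ref{sol-equi}.
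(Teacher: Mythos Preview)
Your proposal is correct and matches the paper's own reasoning exactly: the paper simply remarks that the arguments establishing \eqref{eq:ii} in the proof of Theorem~\ref{sol-equi} already yield the stronger inequality \eqref{eq:ii'} for every $h>0$, since the generalized It\^o computation and the sign of $(A_X-r)J(\cdot;\utt x_k,\hat x_k)$ never depended on $h$ being small. Your write-up is, if anything, more explicit than the paper's one-line justification.
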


We also find:

\begin{thm} \label{sol-conv-equi} 
The equilibrium $(\utt x_k,\hat x_k)$ in Theorem \ref{sol-equi} has the following properties:
\begin{align}
& \mbox{$0 \leq  \utt x_k< x^*<\hat x_k$}\label{sol-conv-equi:1}\\
& \mbox{$\hat x_k$ is continous and increasing in $k$; $\utt x_k$ is continous and decreasing in $k$}\label{sol-conv-equi:2}\\
& \mbox{$\utt x_k,\hat x_k \rightarrow x^*$ as $k \rightarrow 0$}\label{sol-conv-equi:3}\\
& \mbox{$J\left(x;\utt x_k,\hat x_k\right)  \rightarrow U(x)$ as $k \rightarrow 0$, for any $x\geq0$}\label{sol-conv-equi:4}\\
%& \mbox{$\hat x_k \rightarrow \infty$ and $\utt x_k \rightarrow 0$ as $k \rightarrow \infty$  \textbf{[works?]} }\label{sol-conv-equi:5}\\
& \mbox{the equilibrium is a ruin strategy, i.e. $\utt x_k=0$, if and only if $k=1$.}\label{sol-conv-equi:6}
\end{align}
\end{thm}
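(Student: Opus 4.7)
Properties \eqref{sol-conv-equi:1} and \eqref{sol-conv-equi:6} are immediate from Lemma \ref{lemma-equi}\ref{lemma-equi:part1}, which already supplies the strict ordering $0 \le \utt x_k < x^* < \hat x_k$ together with the equivalence $\utt x_k = 0 \iff k = 1$. To handle the remaining properties I would treat $\hat x_k$ as an implicit function of $k \in (0,1]$ through
\[F(k,\bar x) := g\!\left(\bar x - k\,g(\bar x)/g'(\bar x)\right) - (1-k)\,g(\bar x) = 0,\]
using Lemma \ref{lemma-equi}, and recover $\utt x_k$ via $\utt x_k = \hat x_k - k\,g(\hat x_k)/g'(\hat x_k)$, equivalently $g(\utt x_k) = (1-k)g(\hat x_k)$.

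For \eqref{sol-conv-equi:2} I would apply the implicit function theorem. Setting $\phi(\bar x,k) := \bar x - k g(\bar x)/g'(\bar x)$, direct differentiation gives
\begin{align}
\partial_k F(k,\hat x_k) &= g(\hat x_k)\bigl(1 - g'(\utt x_k)/g'(\hat x_k)\bigr),\\
\partial_{\bar x} F(k,\hat x_k) &= g'(\utt x_k)\,\phi_{\bar x}(\hat x_k,k) - (1-k)\,g'(\hat x_k),
\end{align}
with $\phi_{\bar x}(\hat x_k,k) = 1 - k + k\,g(\hat x_k)g''(\hat x_k)/g'(\hat x_k)^2 > 1-k \ge 0$, the strict inequality coming from $g''(\hat x_k) > 0$ (Lemma \ref{canon-sol-lemma}\ref{canon-sol-lemma:4}, since $\hat x_k > x^* = x_b$). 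The decisive input is the inequality $g'(\utt x_k) > g'(\hat x_k)$, obtained as follows: the smooth-fit identity $g'(\hat x_k) = (g(\hat x_k) - g(\utt x_k))/(\hat x_k - \utt x_k)$ combined with the mean value theorem for $g$ on $[\utt x_k, \hat x_k]$ produces $\xi \in (\utt x_k,\hat x_k)$ with $g'(\xi) = g'(\hat x_k)$, and since $g'$ is strictly increasing on $[x^*,\infty)$ this forces $\xi < x^*$; the function $h(x) := g(x) - g'(\hat x_k)x$ is then concave on $[0,x^*]$ with $h(\utt x_k) = h(\hat x_k) < h(x^*)$ and $h'(x^*) < 0$, which compels $h'(\utt x_k) > 0$, i.e.\ $g'(\utt x_k) > g'(\hat x_k)$. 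Hence $\partial_k F < 0$ and $\partial_{\bar x} F > 0$, so IFT yields that $\hat x_k$ is $C^1$ and strictly increasing in $k$. Differentiating $g(\utt x_k) = (1-k)g(\hat x_k)$ and substituting the IFT formula for $\hat x_k'$, the inequality $(\utt x_k)' < 0$ reduces after cancellations to $\phi_{\bar x}(\hat x_k,k) > 1-k$, which was just established.

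Property \eqref{sol-conv-equi:3} then follows: monotonicity together with $\utt x_k \le x^* \le \hat x_k$ yields one-sided limits $\utt^0 := \lim_{k \searrow 0}\utt x_k \in [0,x^*]$ and $\hat^0 := \lim_{k \searrow 0}\hat x_k \in [x^*,\infty)$; passing to the limit in $g(\utt x_k) = (1-k)g(\hat x_k)$ gives $g(\utt^0) = g(\hat^0)$, and strict monotonicity of $g$ (Lemma \ref{canon-sol-lemma}\ref{canon-sol-lemma:2}) forces $\utt^0 = \hat^0 = x^*$. Property \eqref{sol-conv-equi:4} then follows by inserting these limits into \eqref{equilib-value-func} and comparing with \eqref{uncon-problembU}: for $0 \le x \le x^*$ one has $J(x;\utt x_k, \hat x_k) = g(x)/g'(\hat x_k) \to g(x)/g'(x^*) = U(x)$, while for $x > x^*$ the $x > \hat x_k$ branch of \eqref{equilib-value-func} eventually applies (as soon as $\hat x_k < x$, which happens for $k$ small enough) and yields $x - \utt x_k + g(\utt x_k)/g'(\hat x_k) \to x - x^* + g(x^*)/g'(x^*) = U(x)$; continuity in $x$ handles $x = x^*$. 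The step I expect to require the most care is the sign analysis giving $g'(\utt x_k) > g'(\hat x_k)$, since it drives both monotonicity assertions in \eqref{sol-conv-equi:2} via the IFT; once it is in place, the rest of the argument is routine implicit differentiation combined with plugging limits into the explicit formulas supplied by Proposition \ref{expl-JR}, Proposition \ref{classical-sol} and Theorem \ref{sol-equi}.
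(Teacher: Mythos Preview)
Your argument is correct. The paper proceeds differently: rather than applying the implicit function theorem to $F(k,\bar x)=0$, it recycles the monotone map $\underline{x}\mapsto \bar x(\underline{x})$ constructed in the proof of Lemma~\ref{lemma-equi} (the function labelled \eqref{new-func-july}) together with the monotonicity of $\bar x\mapsto R(\bar x;\underline{x},\bar x)$ from Lemma~\ref{propR}. In that parametrization, increasing $k$ forces $R$ to decrease, which by the monotonicity of $R$ and of $\bar x(\cdot)$ pushes $\underline{x}$ down and $\bar x$ up; the limits \eqref{sol-conv-equi:3}--\eqref{sol-conv-equi:4} then drop out from $\bar x(\underline{x})\to x^*$ as $\underline{x}\to x^*$. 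Your IFT route is more self-contained and yields the stronger conclusion that $k\mapsto(\utt x_k,\hat x_k)$ is $C^1$ with strict monotonicity, at the price of the sign computation for $\partial_{\bar x}F$ and the auxiliary inequality $g'(\utt x_k)>g'(\hat x_k)$; incidentally, that inequality can be obtained in one line once you have $\xi\in(\utt x_k,x^*)$ with $g'(\xi)=g'(\hat x_k)$, since $g'$ is strictly decreasing on $(0,x^*)$ and $\utt x_k<\xi$, so your detour through $h$ is not needed. The paper's approach is shorter because it reuses existing machinery, whereas yours makes the dependence on $k$ fully explicit.
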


%\begin{thm} If $k=\infty$ (i.e. in the unconstrained case) then the classical solution is the unique equilibrium 
%\added[id=kri,remark={}]{\textbf{[include smth like this]}}. [we remark that this supports our equilibrium definition].
%\end{thm}

\subsection{Motivation of equilibrium definition and discussion} \label{discussion-eq}
%Here we will interpret and motivate Definition \ref{pure-mix-def} and Definition \ref{equilibrium-def}. 

The maximization of the value function $J(x;S)$ in \eqref{value-func} under the constraint \eqref{constraint2} is time-inconsistent in the sense that the optimal policy depends on $x$, see Remark \ref{pre-commitment-prob-is-time-incons}. 
The game-theoretic approach is to suppose that the decision maker, or controller, in a time-inconsistent problem is a person with time-inconsistent preferences and to reinterpret the problem as an intrapersonal sequential dynamic game. For the problem of the present paper this means identifying each $x\in(0,\infty)$ with an agent, called the $x$-agent, who decides if, and of which size, a dividend should be paid if the current value of the process $X$ is $x$; and letting all $x$-agents play a sequential dynamic game against each other regarding how to pay dividends from the surplus process $X$.  
%
%\added[id=kri,remark={hmmm since deviation occurs only at the present x and not at future then those $x$:s should correspond to different players I guess? or maybe its ok because the current $x$-players cannot commit the future $x$-player (but then they are not really the same player I guess...)...}]{} 
%
(We remark that similar interpretations for regular time-inconsistent stochastic control can be found in e.g. \cite{bjork2017time,tomas-disc,lindensjo2017timeinconHJB}, and in \cite{christensen2017finding,christensen2018time} for time-inconsistent stopping.)  The interpretation of Definition \ref{pure-mix-def} is therefore that each $x$-agent must make his decision at $x$ without randomization and only based the current value $x$ and in this sense it is clear that Definition \ref{pure-mix-def} corresponds to a pure Markov strategy profile. (Recall that, in general, a pure Markov strategy depends only on past events that are payoff relevant and determines the action of an agent without randomization and that a strategy profile is a complete specification of the strategies of every agent in a game.) %; see e.g. \cite{christensen2017finding} for more details..

The items in Definitions \ref{equilibrium-def} and \ref{strongequilibrium-def} have the following interpretations:
 
\begin{itemize}
\item   Condition \eqref{eq:i} ensures that an equilibrium $\hat S$ is admissible from the viewpoint of every $x$-agent. In particular, the constraint \eqref{constraint2} is, from the view-point of every $x$-agent, satisfied.

\item   Condition  \eqref{eq:ii} is an adaptation of the usual first order equilibrium condition in time-inconsistent stochastic control, studied for a general model in continuous time in \cite{bjork2017time}. %, and time-inconsistent stopping, see \cite{christensen2017finding,christensen2018time}; see also Section \ref{sec:prev-lit}. 
The interpretation of \eqref{eq:ii} is that an $x$-agent's criterion for not deviating from $\hat S$ by not paying a dividend at $x$ when $\hat S$ prescribes paying a dividend at $x$ is that the instantaneous expected discounted rate of change relative to $\E_x(\tau_h)$ obtained by deviating is non-positive. In line with \cite[Remark 3.5]{bjork2017time} and \cite[Section 2.1]{christensen2018time} we remark that this kind of first-order equilibrium may correspond to a stationary point that is not a maximum in the sense that the numerator in \eqref{eq:ii} can be negative for each fixed $h>0$ and still be in line with \eqref{eq:ii} by vanishing with order $o(\E_x(\tau_h))$. Clearly, it is not entirely satisfactory in every situation that an equilibrium may correspond to a stationary point that is not a maximum in this sense. With an observation of this kind as a motivation the notion of a strong equilibrium was, in a time-inconsistent stochastic control framework, defined in \cite{huang2018strong}. \eqref{eq:ii'} is an adaptation of this notion to the problem of the present paper and the interpretation is, based on the discussion above, obvious.

\item   Condition \eqref{eq:iii} means that if it is for an $x$-agent admissible to pay a dividend, then the action prescribed for the $x$-agent by $\hat S$ is more desirable, from the viewpoint of that $x$-agent, than paying any (alternative) admissible divided.

\end{itemize}

A conclusion of Theorem \ref{sol-conv-equi} is that the equilibrium solution for the constrained problem converges monotonically to the optimal solution for the unconstrained problem when the constraint \eqref{constraint2} vanishes in the sense of sending $k\rightarrow0$. It is clearly desirable that an equilibrium solution is optimal in the absence of time-inconsistency and the just mentioned fact thus further supports our equilibrium definition.

Equation \eqref{lemma-equi2:1} can be said to be a smooth fit equilibrium condition. We remark that a smooth fit equilibrium condition for a time-inconsistent stopping problem was found in \cite{christensen2018time}.

\section{An example} \label{example:sec}
In this section we suppose that the uncontrolled surplus process is a Wiener process with positive drift, and let $\mu>0$ and $\sigma>0$ denote the its drift and volatility, respectively. Using elementary calculations, see e.g. \cite[Section 5]{shreve1984optimal}, we find the canonical solution of \eqref{ODE1}--\eqref{ODE2} to be 
\begin{align}
g(x)&=e^{\alpha_1 x}-e^{\alpha_2x}\\
&\enskip \mbox{ where
$\alpha_1:= -\frac{\mu}{\sigma^2} + \sqrt{\frac{\mu^2}{\sigma^4}+\frac{2r}{\sigma^2}}$ and 
$\alpha_2:= -\frac{\mu}{\sigma^2} - \sqrt{\frac{\mu^2}{\sigma^4}+\frac{2r}{\sigma^2}}$.}
\end{align}
It is now easy, cf. \eqref{star-def}, to verify that
%, cf. Proposition \ref{classical-sol},  
\begin{align}
x^* = \frac{\log(\alpha_2^2/\alpha_1^2)}{\alpha_1-\alpha_2}\in(0,\infty).\label{uncon-problemb**}
\end{align}
In all illustrations in this section we use the parameter values 
$\mu=0.06$, 
$\sigma^2=0.03$ and 
$r=0.02$. This implies that $x^*=1.1405$. The strict concavity-convexity of $g(\cdot)$, in the sense of Lemma \ref{canon-sol-lemma}\ref{canon-sol-lemma:4}, is illustrated in Figure \ref{fig-g}.  
\pgfmathsetmacro{\ao}{0.309401077} %alpha1
\pgfmathsetmacro{\at}{-4.309401077} %alpha1

\begin{figure}[H]
\center
\begin{tikzpicture}[scale=0.85,
declare function={  g(\x)= exp(\x*\ao)-exp(\x*\at);}] 
 \begin{axis}[ymin=0, ymax=3, xlabel={$x$}, ylabel={$g(x)$},smooth]
	  %%%% g %%%%%%
		\addplot[style=solid,domain=0:5]{g(x)};
				\end{axis} 
	\end{tikzpicture} 
					\caption{The canonical solution of \eqref{ODE1}--\eqref{ODE2} corresponding to a Wiener process with positive drift.}\label{fig-g}
\end{figure}
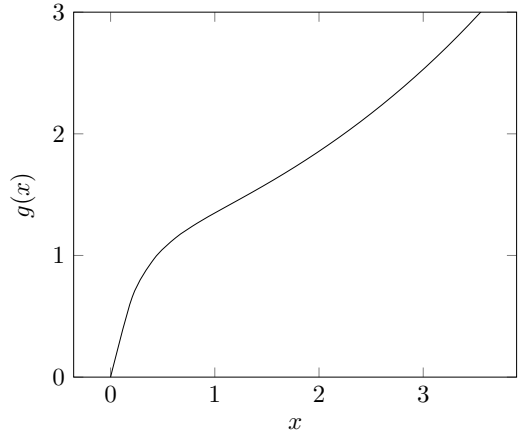

Let us first see how the optimal precommitment policy $(\ut{x}_k,\tilde{x}_k)$ for an arbitrary fixed initial surplus $x_0>0$ is found.  First, use \eqref{H-function} to find
\begin{equation}
H(x;c,\underline{x},\bar{x}) = \begin{cases}
(e^{\alpha_1 x}-e^{\alpha_2x})\frac{\bar{x}-\underline{x}-c}{e^{\alpha_1 \bar{x}}-e^{\alpha_2\bar{x}}-e^{\alpha_1 \underline{x}}+e^{\alpha_2\underline{x}}}, 					&\ 0 \leq x \leq \bar{x},\\
 x-\underline{x}-c+ (e^{\alpha_1 \underline{x}}-e^{\alpha_2\underline{x}})\frac{\bar{x}-\underline{x}-c}{e^{\alpha_1 \bar{x}}-e^{\alpha_2\bar{x}}-e^{\alpha_1 \underline{x}}+e^{\alpha_2\underline{x}}},		& x>\bar{x}. 
\end{cases}
\end{equation}
This implies that \eqref{fixed-cost-smoothfit1} is satisfied if 
\begin{align}
(\alpha_1e^{\alpha_1 \bar{x}}-\alpha_2e^{\alpha_2\bar{x}})\frac{\bar{x}-\underline{x}-c}{e^{\alpha_1 \bar{x}}-e^{\alpha_2\bar{x}}-e^{\alpha_1 \underline{x}}+e^{\alpha_2\underline{x}}} & = 1,\label{exampl-eq:1}\\
(\alpha_1e^{\alpha_1 \underline{x}}-\alpha_2e^{\alpha_2\underline{x}})\frac{\bar{x}-\underline{x}-c}{e^{\alpha_1 \bar{x}}-e^{\alpha_2\bar{x}}-e^{\alpha_1 \underline{x}}+e^{\alpha_2\underline{x}}} & = 1, \enskip \mbox{for $\underline{x}>0$},\label{exampl-eq:2}
\end{align}
and that \eqref{fixed-cost-smoothfit2} is satisfied  if 
\begin{align}
(\alpha_1e^{\alpha_1 \bar{x}}-\alpha_2e^{\alpha_2\bar{x}})\frac{\bar{x} -c}{e^{\alpha_1 \bar{x}}-e^{\alpha_2\bar{x}}} = 1.\label{exampl-eq:3}
\end{align}
Second, we similarly obtain
\begin{equation} 
R(x_0;\underline{x},\bar{x})  = \begin{cases}
\frac{e^{\alpha_1 x_0}-e^{\alpha_2x_0}}{e^{\alpha_1 \bar{x}}-e^{\alpha_2\bar{x}}-e^{\alpha_1\underline{x}}+e^{\alpha_2\underline{x}}}, 					&\ 0 \leq x_0 \leq \bar{x},\\
 1+ \frac{e^{\alpha_1 \underline{x}}-e^{\alpha_2\underline{x}}}{e^{\alpha_1 \bar{x}}-e^{\alpha_2\bar{x}}-e^{\alpha_1\underline{x}}+e^{\alpha_2\underline{x}}},		& x_0>\bar{x}.\label{exampl-eq:4}
\end{cases}
\end{equation}
In order to find the optimal precommitment policy $(\ut{x}_k,\tilde{x}_k)$ we now consider some constant $c_1>0$ and determine $\left(\underline{x}_{c_1},\bar{x}_{c_1}\right)$ as the solution to the dividend problem with fixed cost $c_1$ according to Proposition \ref{fix-cost-rem}. This means that we let$\left(\underline{x}_{c_1},\bar{x}_{c_1}\right)$ be the solution to \eqref{exampl-eq:1}--\eqref{exampl-eq:2} if it exists and if it does not then we set $\left(\underline{x}_{c_1},\bar{x}_{c_1}\right) = \left(0,\bar{x}_{c_1}\right)$ where $\bar{x}_{c_1}$ is the solution to \eqref{exampl-eq:3}. 
An illustration is presented in Figure \ref{fig-precom1}.
\begin{figure}[H]
\center
\begin{tikzpicture}[scale=0.85,
declare function={  g(\x)= exp(\x*\ao)-exp(\x*\at); gp(\x)= \ao*exp(\x*\ao)-\at*exp(\x*\at);}] 
		\begin{axis}[xmin=0, ymax=3.5,xlabel={$x$}, ylabel={$H(x;c,\underline{x}_c,\bar{x}_c)$},smooth]
		\pgfmathsetmacro{\c}{0.1} %
		\pgfmathsetmacro{\xunder}{0.767011167}
		\pgfmathsetmacro{\xbar}{1.852844992}
		\addplot[style=solid,domain=0:\xbar]{g(x)*(\xbar-\xunder-\c)/(g(\xbar)-g(\xunder))};
		\addplot[style=solid,domain=\xbar:{\xbar+1}]{x-\xunder-\c+g(\xunder)*(\xbar-\xunder-\c)/(g(\xbar)-g(\xunder))};
		\addplot[style=dashed,domain={\xunder-0.9}:{\xunder+0.9}]{g(\xunder)*(\xbar-\xunder-\c)/(g(\xbar)-g(\xunder)) + x-\xunder};
		\addplot[style=dashed,domain={\xbar-0.9}:{\xbar+0.9}]{g(\xbar)*(\xbar-\xunder-\c)/(g(\xbar)-g(\xunder)) + x-\xbar};
		\node at (1.2,3.1) [above] {$c=0.1$};
		\pgfmathsetmacro{\cTWO}{0.6} %
		\pgfmathsetmacro{\xunderTWO}{0.545345718313056}
		\pgfmathsetmacro{\xbarTWO}{2.97689011549229}
		\addplot[style=solid,domain=0:\xbarTWO]{g(x)*(\xbarTWO-\xunderTWO-\cTWO)/(g(\xbarTWO)-g(\xunderTWO))};
		\addplot[style=solid,domain=\xbarTWO:{\xbarTWO+1}]{x-\xunderTWO-\cTWO+g(\xunderTWO)*(\xbarTWO-\xunderTWO-\cTWO)/(g(\xbarTWO)-g(\xunderTWO))};
		\addplot[style=dashed,domain={\xunderTWO-0.9}:{\xunderTWO+0.9}]{g(\xunderTWO)*(\xbarTWO-\xunderTWO-\cTWO)/(g(\xbarTWO)-g(\xunderTWO)) + x-\xunderTWO};
		\addplot[style=dashed,domain={\xbarTWO-0.9}:{\xbarTWO+0.9}]{g(\xbarTWO)*(\xbarTWO-\xunderTWO-\cTWO)/(g(\xbarTWO)-g(\xunderTWO)) + x-\xbarTWO};
		\node at (3.5,3.0) [above] {$c=0.6$};
		\pgfmathsetmacro{\cTHREE}{2} %
		\pgfmathsetmacro{\xunderTHREE}{0.31830611359324}
		\pgfmathsetmacro{\xbarTHREE}{4.95798038852098}
		\addplot[style=solid,domain=0:\xbarTHREE]{g(x)*(\xbarTHREE-\xunderTHREE-\cTHREE)/(g(\xbarTHREE)-g(\xunderTHREE))};
		\addplot[style=solid,domain=\xbarTHREE:{\xbarTHREE+1}]{x-\xunderTHREE-\cTHREE+g(\xunderTHREE)*(\xbarTHREE-\xunderTHREE-\cTHREE)/(g(\xbarTHREE)-g(\xunderTHREE))};
		\addplot[style=dashed,domain={\xunderTHREE-0.9}:{\xunderTHREE+0.9}]{g(\xunderTHREE)*(\xbarTHREE-\xunderTHREE-\cTHREE)/(g(\xbarTHREE)-g(\xunderTHREE)) + x-\xunderTHREE};
		\addplot[style=dashed,domain={\xbarTHREE-0.9}:{\xbarTHREE+0.9}]{g(\xbarTHREE)*(\xbarTHREE-\xunderTHREE-\cTHREE)/(g(\xbarTHREE)-g(\xunderTHREE)) + x-\xbarTHREE};
		\node at (5.05,2.6) [above] {$c=2.0$};
		%
	  %
		%\pgfmathsetmacro{\cTHREE}{5.6}
		%\pgfmathsetmacro{\xbarTHREE}{8.832050808}
		%\pgfmathsetmacro{\xunderTHREE}{0}
		%\addplot[style=solid,domain=0:\xbarTHREE]{g(x)*(\xbarTHREE-\xunderTHREE-\cTHREE)/(g(\xbarTHREE)-g(\xunderTHREE))};
%		%\addplot[style=solid,domain=\xbarTHREE:100]{x-\xunderTHREE-\cTHREE+g(\xunderTHREE)*(\xbarTHREE-\xunderTHREE-\cTHREE)/(g(\xbarTHREE)-g(\xunderTHREE))};
%		%\addplot[style=dashed,domain={\xbarTHREE-2.0}:{\xbarTHREE+0.8}]{g(\xbarTHREE)*(\xbarTHREE-\xunderTHREE-\cTHREE)/(g(\xbarTHREE)-g(\xunderTHREE)) + x-\xbarTHREE};
		%
		%
		\end{axis} 
	\end{tikzpicture}
		\caption{The optimal value function of the fixed cost dividend problem, see \eqref{fixed-cost-prob}, for 
		$c=0.1$, $c=0.6$ and $c=2.0$
		for which the optimal dividend policy is 
		$(0.7670,1.8528)$,
		$(0.5453,2.9769)$ and 
		$(0.3183,4.9580)$, respectively. 
		The dashed lines indicate smooth fit. 
		}%\eqref{fix-cost-rem}.
		\label{fig-precom1}
\end{figure}
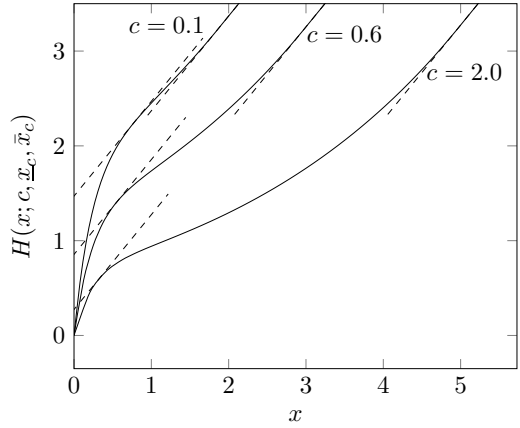  
We now evaluate the function in \eqref{exampl-eq:4} with $(\underline{x},\bar{x})=\left(\underline{x}_{c_1},\bar{x}_{c_1}\right)$  and then iterate this procedure with higher or lower costs in accordance with Remark \ref{how-to-find-precom-sol} until we find a cost $c$ such that \eqref{exampl-eq:4} evaluated at $(\underline{x}_c,\bar{x}_c)$ is equal to $\frac{1}{k}$; which then means that $(\ut{x}_k,\tilde{x}_k)=(\underline{x}_c,\bar{x}_c)$ for the particular initial surplus $x_0$ at hand. An illustration of the precommitment value as a function of $x_0$ is presented in Figure \ref{fig-precom2}. 
% which is based on Table \ref{table-precom} in Appendix \ref{app-pre-sol}.

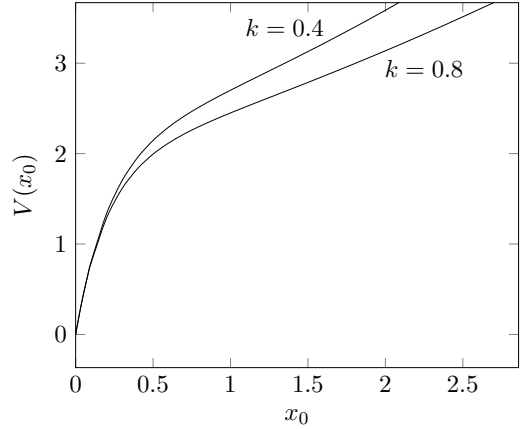
\begin{figure}[H]
\center
\begin{tikzpicture}[scale=0.85] 
		\begin{axis}[xmin=0, ymax=3.67,xlabel={$x_0$}, ylabel={$V(x_0)$},smooth]
	  %%%% G %%%%%%
	  %\addplot +[mark=none,style={black, dashed}] coordinates {(0.17, -0.4) (0.17, 1.629557336)};
		%\addplot +[mark=none,style={black, dashed}] coordinates {(0.75, -0.4) (0.75, 1.05244)};
		%\addplot +[mark=none,style={black, dashed}] coordinates {(1.58, -0.4) (1.58, 0.87275)};
		%\addplot +[mark=none,style={black, ultra thin}] coordinates {(-1,0) (11,0)};
				\addplot[style=solid]		
		coordinates{
(0,0)
(0.025,0.232755453458332)
(0.05,0.44298934171257)
(0.075,0.632699127304428)
(0.1,0.803856424058494)
(0.2,1.33855214997862)
(0.3,1.70165920489287)
(0.4,1.95743951096098)
(0.5,2.14611371519001)
(0.6,2.29279989454614)
(0.7,2.41330950829234)
(0.8,2.51768450868222)
(0.9,2.61233938222563)
(1,2.70139459726172)
(1.1,2.78750700819421)
(1.2,2.87239564216954)
(1.3,2.95717690184586)
(1.4,3.04257908551682)
(1.5,3.12907994213907)
(1.6,3.21699541891446)
(1.7,3.30653678680324)
(1.8,3.39784744133373)
(1.9,3.4910267935249)
(2,3.58614547732458)
(2.1,3.68325526287788)
(2.2,3.7823954386352)
(2.3,3.88359690970019)
(2.4,3.98688483407774)
(2.5,4.09228027128356)
(2.6,4.19980139328459)
(2.7,4.30946404102526)
};
\node at (1.35,3.2) [above] {$k=0.4$};
\addplot[style=solid]		
		coordinates{
(0,0)
(0.025,0.232422812367297)
(0.05,0.440757529765052)
(0.075,0.626432053902384)
(0.1,0.791525131935807)
(0.2,1.29011984935864)
(0.3,1.6136791733304)
(0.4,1.834815950251)
(0.5,1.99461327451429)
(0.6,2.11701873646498)
(0.7,2.21644156960143)
(0.8,2.30175391633957)
(0.9,2.37850221042597)
(1,2.45018837358318)
(1.1,2.51903820170936)
(1.2,2.58647104855506)
(1.3,2.6533953165806)
(1.4,2.72039249664035)
(1.5,2.78783662766339)
(1.6,2.85596529820001)
(1.7,2.9249314376208)
(1.8,2.99483231021794)
(1.9,3.06572935437295)
(2,3.13766064235084)
(2.1,3.21064888302772)
(2.2,3.28470953657127)
(2.3,3.35984730130229)
(2.4,3.43606490483103)
(2.5,3.51336203808114)
(2.6,3.59173574432195)
(2.7,3.671181946565)
};
\node at (2.25,2.75) [above] {$k=0.8$};
		\end{axis} 
	\end{tikzpicture}
		\caption{The precommitment value, see \eqref{precom-value}, as a function of $x_0$ for $k=0.4$ and $k=0.8$.} \label{fig-precom2}
\end{figure} 
Let us now find the equilibrium dividend strategy $(\utt x_k,\hat x_k)$. In the rest of the section we suppose that $k\leq 1$, cf. Remark \ref{equi-k-bigger-than-1}. First, note that \eqref{lemma-equi2:1-ver2} becomes 
\begin{align}
e^{\alpha_1 \left(\bar{x}- k \frac{e^{\alpha_1 \bar{x}}-e^{\alpha_2\bar{x}}}{\alpha_1e^{\alpha_1 \bar{x}}-\alpha_2e^{\alpha_2\bar{x}}}\right)}
-e^{\alpha_2 \left(\bar{x}- k \frac{e^{\alpha_1 \bar{x}}-e^{\alpha_2\bar{x}}}{\alpha_1e^{\alpha_1 \bar{x}}-\alpha_2e^{\alpha_2\bar{x}}}\right)}
 = (1-k)\left(e^{\alpha_1 \bar{x}}-e^{\alpha_2\bar{x}}\right). \label{example-eq1}
\end{align}
Following Remark \ref{how-to-find-equi-sol} we now: (1) ensure that the equilibrium smooth fit condition \eqref{lemma-equi2:1} holds by solving \eqref{example-eq1} and setting $\hat x_k$ equal to this solution, and (2) set 
\begin{align}
\utt x_k =  \hat x_k - k \frac{g(\hat x_k)}{g'(\hat x_k)} = 
\hat x_k- k \frac{e^{\alpha_1 \hat x_k}-e^{\alpha_2\hat x_k}}{\alpha_1e^{\alpha_1 \hat x_k}-\alpha_2e^{\alpha_2\hat x_k}}.
\end{align}
An illustration of the equilibrium value function and the equilibrium smooth fit principle is presented in Figure \ref{fig-equi1}.
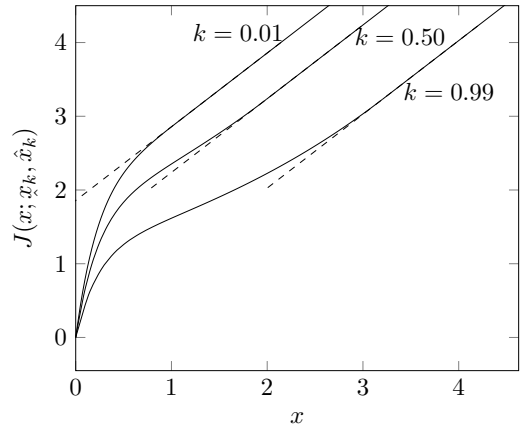
\begin{figure}[H]
\center
	\begin{tikzpicture}[scale=0.85,
declare function={  g(\x)= exp(\x*\ao)-exp(\x*\at); gp(\x)= \ao*exp(\x*\ao)-\at*exp(\x*\at);}] 
	 \begin{axis}[xmin=0, ymax=4.5, xlabel={$x$}, ylabel={$J(x;\utt x_k,\hat x_k)$}, smooth]
			\node at (1.7,3.9) [above] {$k=0.01$};
			\pgfmathsetmacro{\xunder}{1.12055402805}
			\pgfmathsetmacro{\xbar}{1.15065334045}
			\addplot[style=solid,domain=0:\xbar]{g(x)*(\xbar-\xunder)/(g(\xbar)-g(\xunder))};
			\addplot[style=solid,domain=\xbar:100]{x-\xunder+g(\xunder)*(\xbar-\xunder)/(g(\xbar)-g(\xunder))};
			\addplot[style=dashed,domain={\xbar-1.2}:{\xbar+1}]{g(\xbar)*(\xbar-\xunder)/(g(\xbar)-g(\xunder)) + x-\xbar};
			\node at (3.39,3.85) [above] {$k=0.50$};
			\pgfmathsetmacro{\xunderTHREE}{0.37571729589}
			\pgfmathsetmacro{\xbarTHREE}{1.98930000000}
			\addplot[style=solid,domain=0:\xbarTHREE]{g(x)*(\xbarTHREE-\xunderTHREE)/(g(\xbarTHREE)-g(\xunderTHREE))};
			\addplot[style=solid,domain=\xbarTHREE:100]{x-\xunderTHREE+g(\xunderTHREE)*(\xbarTHREE-\xunderTHREE)/(g(\xbarTHREE)-g(\xunderTHREE))};
			\addplot[style=dashed,domain={\xbarTHREE-1.2}:{\xbarTHREE+1}]{g(\xbarTHREE)*(\xbarTHREE-\xunderTHREE)/(g(\xbarTHREE)-g(\xunderTHREE)) + x-\xbarTHREE};
			\node at (3.9,3.1) [above] {$k=0.99$};
			\pgfmathsetmacro{\xunderSIX}{0.00590653977}
			\pgfmathsetmacro{\xbarSIX}{3.20563000000}
			\addplot[style=solid,domain=0:\xbarSIX]{g(x)*(\xbarSIX-\xunderSIX)/(g(\xbarSIX)-g(\xunderSIX))};
			\addplot[style=solid,domain=\xbarSIX:100]{x-\xunderSIX+g(\xunderSIX)*(\xbarSIX-\xunderSIX)/(g(\xbarSIX)-g(\xunderSIX))};
			\addplot[style=dashed,domain={\xbarSIX-1.2}:{\xbarSIX+1}]{g(\xbarSIX)*(\xbarSIX-\xunderSIX)/(g(\xbarSIX)-g(\xunderSIX)) + x-\xbarSIX};
	\end{axis}  
	\end{tikzpicture}
		\caption{The equilibrium value function, see \eqref{equilib-value-func}, for
		$k=0.01$, $k=0.5$ and $k=0.99$, for which the equilibrium dividend strategy is 
		$(1.1206,1.1507)$,
		$(0.3757,1.9893)$ and 
		$(0.0059,3.2056)$, respectively. 
	  The dashed lines indicate smooth fit. }\label{fig-equi1}
\end{figure}

\subsection{Sensitivity with respect to $k$ and $c$}
A natural question when studying impulse control problems is what happens for vanishing fixed costs, in particular what happens to the derivative of the optimal value function. To the best of our knowledge, such an analysis has not been carried out for exactly the optimal dividend problem \eqref{fixed-cost-prob}. 
However, slightly different problems without absorption are studied in \cite{oksendal1999stochastic}, and the references therein, and it is investigated what happens when the fixed cost $c$ is sent to zero, see also \cite{alvarez2008optimal,christensen_irle_ludwig_2017,oksendal2002non}. 
One main finding is that while the value of the problem converges to the one without fixed costs as $c\searrow0$ (as in Proposition \ref{fix-cost-rem:properties}) the derivative with respect to $c$ converges to $-\infty$. The interpretation is that small fixed costs have large effects on the value. 
Figure \ref{fig:asympt_small_costs} illustrates these properties for the optimal dividend problem \eqref{fixed-cost-prob}.  
(In this section we consider a fixed initial surplus $x=x_0=0.025$. The dashed line in each graph below indicates the optimal value without costs, see \eqref{uncon-problembU}, while the dotted line is the optimal dividend barrier without costs, see \eqref{star-def}.)

In contrast to the findings for small fixed costs $c$, Figures \ref{fig:asympt_precom} and \ref{fig:asympt_equili} suggest that small values of $k$ in our constraint \eqref{constraint2} just have small effects on the value for both the precommitment and the equilibrium formulation. To not overburden the present paper, we leave the theoretical investigation of these findings for future research.

\begin{figure}[H]
\center
\begin{tikzpicture}[scale=0.7] 
		\begin{axis}[ymin=0.135, ymax=0.24,xlabel={$c$}, 
		/pgf/number format/precision=5,ylabel={$\sup_{S\in \mathcal A(x)}H(x;c,S)$},smooth]
	  %%%% G %%%%%%
	  %\addplot +[mark=none,style={black, dashed}] coordinates {(0.17, -0.4) (0.17, 1.629557336)};
		%\addplot +[mark=none,style={black, dashed}] coordinates {(0.75, -0.4) (0.75, 1.05244)};
		%\addplot +[mark=none,style={black, dashed}] coordinates {(1.58, -0.4) (1.58, 0.87275)};
		%\addplot +[mark=none,style={black, ultra thin}] coordinates {(-1,0) (11,0)};
				\addplot[style=solid]		
		coordinates{
(0.000001,0.23285063476179)
(0.025,0.2190000912156)
(0.05,0.211293827638586)
(0.075,0.205067347192409)
(0.1,0.19968468082339)
(0.125,0.194877443831074)
(0.15,0.190499446806344)
(0.175,0.186459626213246)
(0.2,0.182696266457372)
(0.225,0.179165032406952)
(0.25,0.175832682888366)
(0.275,0.172673464326354)
(0.3,0.169666900025165)
(0.325,0.166796362808084)
(0.35,0.164048114128147)
(0.375,0.161410634579611)
(0.4,0.158874143810545)
(0.425,0.156430247718543)
(0.45,0.154071673661853)
(0.475,0.151792068049035)
(0.5,0.149585839102762)
(0.525,0.147448032960041)
(0.55,0.145374234797706)
(0.575,0.143360489019345)
(0.6,0.141403234168786)
(0.625,0.139499249352213)
};
		\addplot[style=dashed]		
		coordinates{
(0.000001,0.23286742672297)
(0.625,0.23286742672297)
};
\end{axis} 
	\end{tikzpicture}
		\begin{tikzpicture}[scale=0.7] 
		\begin{axis}[ymin=0.53, ymax=3.05,/pgf/number format/precision=4,xlabel={$c$}, ylabel={},smooth]
	  %%%% G %%%%%%
	  %\addplot +[mark=none,style={black, dashed}] coordinates {(0.17, -0.4) (0.17, 1.629557336)};
		%\addplot +[mark=none,style={black, dashed}] coordinates {(0.75, -0.4) (0.75, 1.05244)};
		%\addplot +[mark=none,style={black, dashed}] coordinates {(1.58, -0.4) (1.58, 0.87275)};
		%\addplot +[mark=none,style={black, ultra thin}] coordinates {(-1,0) (11,0)};
				\node at (0.4,2.6) [above] {$\bar{x}_c$};
				\node at (0.4,0.6) [above] {$\underline{x}_c$};
				\addplot[style=dotted]		
		coordinates{
(0.00000001,1.14051899)
(0.65,1.14051899)
};				
\addplot[style=solid]		
		coordinates{%fix-under
(0.000001,1.13018939439675)
(0.025,0.885397281880118)
(0.05,0.830953400433187)
(0.075,0.794780864142584)
(0.1,0.767010883354394)
(0.125,0.7441928589271)
(0.15,0.7246757992576)
(0.175,0.707533400996235)
(0.2,0.692189004441446)
(0.225,0.678256652533372)
(0.25,0.665466861710429)
(0.275,0.653620133397905)
(0.3,0.642568334999336)
(0.325,0.632194281485662)
(0.35,0.622407504529138)
(0.375,0.613132790193737)
(0.4,0.604311018882349)
(0.425,0.595890690761857)
(0.45,0.58783100109037)
(0.475,0.580094981226305)
(0.5,0.572652947048029)
(0.525,0.565478812557179)
(0.55,0.558549173501193)
(0.575,0.551843878991438)
(0.6,0.545345549222892)
(0.625,0.539038681188981)
};
\addplot[style=solid]		
		coordinates{%fix-upper
(0.000001,1.15099157861911)
(0.025,1.52377842902634)
(0.05,1.65760639905956)
(0.075,1.76241314847323)
(0.1,1.85284650651488)
(0.125,1.93431396217022)
(0.15,2.00948919399535)
(0.175,2.07992985583354)
(0.2,2.14663249778363)
(0.225,2.2102813309119)
(0.25,2.27136868406522)
(0.275,2.33026542583137)
(0.3,2.38725780533277)
(0.325,2.44257207347516)
(0.35,2.49639469847987)
(0.375,2.54887352670632)
(0.4,2.60014204864591)
(0.425,2.65030126650336)
(0.45,2.69944887629313)
(0.475,2.74765743426721)
(0.5,2.79500807361437)
(0.525,2.84155115305079)
(0.55,2.88735093143093)
(0.575,2.93244980844827)
(0.6,2.97689113843796)
(0.625,3.0207150349188)
};
\end{axis} 
	\end{tikzpicture}
		\caption{The first graph illustrates the value for the fixed cost dividend problem as a function of $c$. 
		The second graph illustrates the corresponding dividend policy as a function of $c$.} \label{fig:asympt_small_costs}
\end{figure}
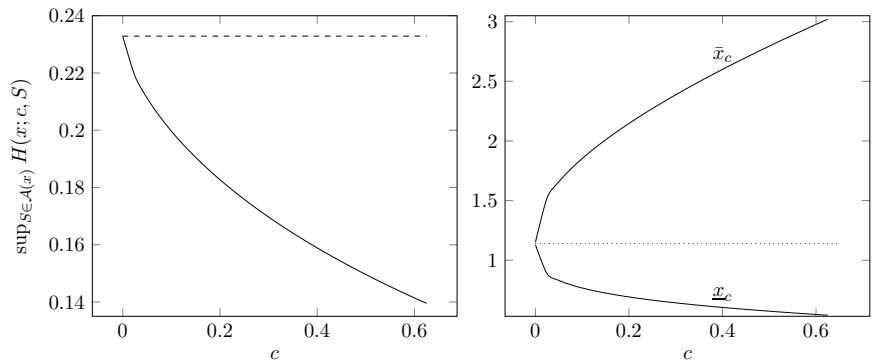

\begin{figure}[H]
\center
			\begin{tikzpicture}[scale=0.7] 
		\begin{axis}[ymin=0.23255, ymax=0.23295,/pgf/number format/precision=4,xlabel={$k$}, ylabel={$V(x_0)$},smooth]
	  %%%% G %%%%%%
	  %\addplot +[mark=none,style={black, dashed}] coordinates {(0.17, -0.4) (0.17, 1.629557336)};
		%\addplot +[mark=none,style={black, dashed}] coordinates {(0.75, -0.4) (0.75, 1.05244)};
		%\addplot +[mark=none,style={black, dashed}] coordinates {(1.58, -0.4) (1.58, 0.87275)};
		%\addplot +[mark=none,style={black, ultra thin}] coordinates {(-1,0) (11,0)};
				\addplot[style=dashed]		
		coordinates{
(0.0414599533993498,0.23286742672297)
(0.632054272689476,0.23286742672297)
};				
\addplot[style=solid]		
		coordinates{
(0.00893212311126738,0.232867370751838)
(0.0414599533993498,0.232866220306548)
(0.0893276984221259,0.232861829497781)
(0.152781418451972,0.232851057083638)
(0.192518305771818,0.232841440053187)
(0.242609835560409,0.232826171574312)
(0.300000112827751,0.232804375008439)
(0.350133850844845,0.232781583600709)
(0.400000000048371,0.23275545488356)
(0.464836614589025,0.232716343036345)
(0.504629778154618,0.232689472961664)
(0.564542335424184,0.23264492626242)
(0.632054272689476,0.232588869959545)
};
\end{axis} 
	\end{tikzpicture}
	\begin{tikzpicture}[scale=0.7] 
		\begin{axis}[ymin=1.07, ymax=1.218,xlabel={$k$}, 
		/pgf/number format/precision=5,ylabel={},smooth]
	  %%%% G %%%%%%
	  %\addplot +[mark=none,style={black, dashed}] coordinates {(0.17, -0.4) (0.17, 1.629557336)};
		%\addplot +[mark=none,style={black, dashed}] coordinates {(0.75, -0.4) (0.75, 1.05244)};
		%\addplot +[mark=none,style={black, dashed}] coordinates {(1.58, -0.4) (1.58, 0.87275)};
		%\addplot +[mark=none,style={black, ultra thin}] coordinates {(-1,0) (11,0)};
	\addplot[style=dotted]		
		coordinates{
(0.00000001,1.14051899)
(0.632,1.14051899)
};	
\addplot[style=solid]		
		coordinates{%precom-under
(0.00893212311126738,1.13947859334401)
(0.0414599533993498,1.1357666405696)
(0.0893276984221259,1.13019028140108)
(0.152781418451972,1.12294120983937)
(0.192518305771818,1.11843966234291)
(0.242609835560409,1.11280641410706)
(0.300000112827751,1.10640721492708)
(0.350133850844845,1.10087160466846)
(0.400000000048371,1.09541016490648)
(0.464836614589025,1.08837759946377)
(0.504629778154618,1.08410029841758)
(0.564542335424184,1.07771635653113)
(0.632054272689476,1.07060354773639)

};
\addplot[style=solid]		
		coordinates{%precom-up
(0.00893212311126738,1.14155859336817)
(0.0414599533993498,1.14542126321179)
(0.0893276984221259,1.15099129268048)
(0.152781418451972,1.15851652462865)
(0.192518305771818,1.16326590189542)
(0.242609835560409,1.16929233330686)
(0.300000112827751,1.17624855369641)
(0.350133850844845,1.18237631694033)
(0.400000000048371,1.18851234687116)
(0.464836614589025,1.19655267652033)
(0.504629778154618,1.20152233553714)
(0.564542335424184,1.2090542665279)
(0.632054272689476,1.21761233677433)
};
				\node at (0.4,1.095) [above] {$\ut{x}_k$};
				\node at (0.4,1.19) [above] {$\tilde{x}_k$};
				\end{axis} 
	\end{tikzpicture}
		\caption{
		The first graph illustrates the optimal precommitment value as a function of $k$. The second graph illustrates the corresponding dividend policy as a function of $k$.}\label{fig:asympt_precom}
\end{figure}
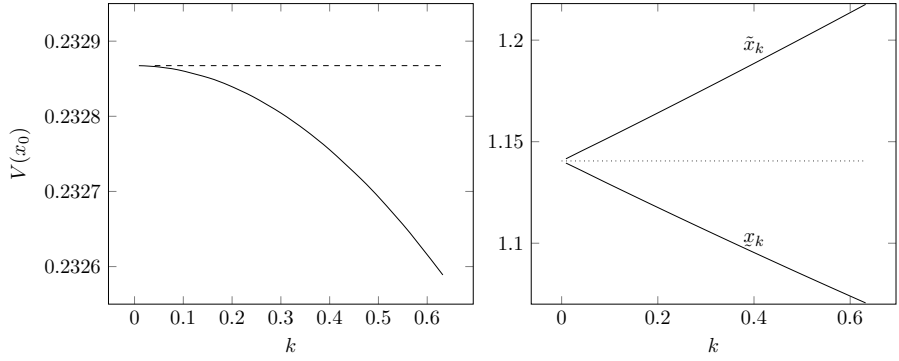

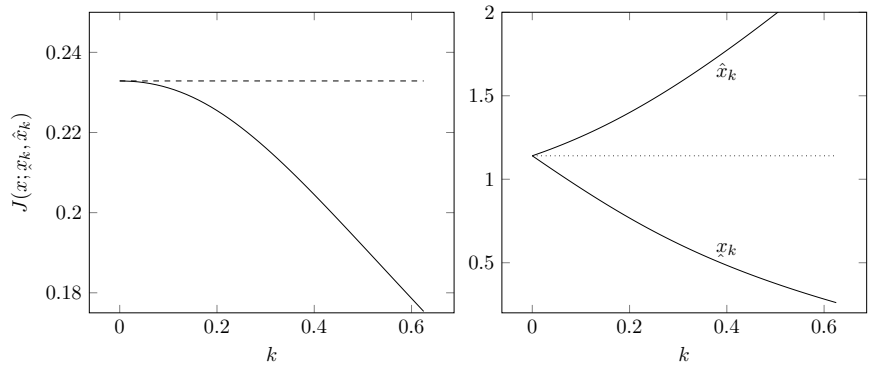
\begin{figure}[H]
\center
\begin{tikzpicture}[scale=0.7] 
		\begin{axis}[ymin=0.175, ymax=0.25,xlabel={$k$}, 
		/pgf/number format/precision=5,ylabel={$J(x;\utt x_k,\hat x_k)$},smooth]
	  %%%% G %%%%%%
	  %\addplot +[mark=none,style={black, dashed}] coordinates {(0.17, -0.4) (0.17, 1.629557336)};
		%\addplot +[mark=none,style={black, dashed}] coordinates {(0.75, -0.4) (0.75, 1.05244)};
		%\addplot +[mark=none,style={black, dashed}] coordinates {(1.58, -0.4) (1.58, 0.87275)};
		%\addplot +[mark=none,style={black, ultra thin}] coordinates {(-1,0) (11,0)};
\addplot[style=solid]		
		coordinates{
		(0.0000001,0.232867426685655)
(0.025,0.232767217573785)
(0.05,0.232454437077106)
(0.075,0.231912649361208)
(0.1,0.231128716896055)
(0.125,0.230093850027199)
(0.15,0.228804422945915)
(0.175,0.227262418099941)
(0.2,0.225475413631427)
(0.225,0.223456107001674)
(0.25,0.221221450009721)
(0.275,0.218791531519473)
(0.3,0.216188368801758)
(0.325,0.213434755329412)
(0.35,0.210553273318428)
(0.375,0.207565529648726)
(0.4,0.204491628307028)
(0.425,0.201349859430824)
(0.45,0.198156566440207)
(0.475,0.194926146475263)
(0.5,0.191671141508639)
(0.525,0.188402384203394)
(0.55,0.185129170872616)
(0.575,0.181859441906339)
(0.6,0.178599956823331)
(0.625,0.175356456373894)
};
	\addplot[style=dashed]		
		coordinates{
(0.000001,0.23286742672297)
(0.625,0.23286742672297)
};
		\end{axis} 
	\end{tikzpicture}
	\begin{tikzpicture}[scale=0.7] 
		\begin{axis}[ymin=0.2, ymax=2,xlabel={$k$}, 
		/pgf/number format/precision=5,ylabel={},smooth]
	  %%%% G %%%%%%
	  %\addplot +[mark=none,style={black, dashed}] coordinates {(0.17, -0.4) (0.17, 1.629557336)};
		%\addplot +[mark=none,style={black, dashed}] coordinates {(0.75, -0.4) (0.75, 1.05244)};
		%\addplot +[mark=none,style={black, dashed}] coordinates {(1.58, -0.4) (1.58, 0.87275)};
		%\addplot +[mark=none,style={black, ultra thin}] coordinates {(-1,0) (11,0)};
\addplot[style=solid]		
		coordinates{%eq-under
(0.0000001,1.14053419739797)
(0.025,1.09075414115241)
(0.05,1.04156963966334)
(0.075,0.993134565771111)
(0.1,0.945618493539192)
(0.125,0.89918467700046)
(0.15,0.853982853804225)
(0.175,0.810142491371249)
(0.2,0.76776731530689)
(0.225,0.726931785607917)
(0.25,0.68767985778614)
(0.275,0.650025979222406)
(0.3,0.613957936298144)
(0.325,0.579440963904402)
(0.35,0.54642247838509)
(0.375,0.514836869819154)
(0.4,0.484609935918758)
(0.425,0.4556627041085)
(0.45,0.42791453299798)
(0.475,0.401285491584695)
(0.5,0.375698081725486)
(0.525,0.351078403102536)
(0.55,0.327356869608161)
(0.575,0.304468580892056)
(0.6,0.282353439891397)
(0.625,0.260956091372392)
};
\addplot[style=solid]		
		coordinates{%eq-up
(0.0000001,1.14053449739952)
(0.025,1.16636790478205)
(0.05,1.19397319255645)
(0.075,1.22341227493662)
(0.1,1.254745345361)
(0.125,1.28801181299293)
(0.15,1.32322790500774)
(0.175,1.36038582068723)
(0.2,1.39945459464613)
(0.225,1.4403825425392)
(0.25,1.48310090777527)
(0.275,1.52752817370043)
(0.3,1.57357448394759)
(0.325,1.62114570747641)
(0.35,1.67014684490399)
(0.375,1.72048464331008)
(0.4,1.772069426711)
(0.425,1.82481624036799)
(0.45,1.87864544978329)
(0.475,1.93348294118785)
(0.5,1.98926005395037)
(0.525,2.04591334901887)
(0.55,2.10338428973272)
(0.575,2.16161888682592)
(0.6,2.22056734007652)
(0.625,2.28018369499548)
};
\addplot[style=dotted]		
		coordinates{
(0.0000001,1.1405189945)
(0.625000,1.1405189945)
};
				\node at (0.4,0.5) [above] {$\utt{x}_k$};
				\node at (0.4,1.55) [above] {$\hat{x}_k$};
\end{axis} 
	\end{tikzpicture}
		\caption{The first graph illustrates the equilibrium value as a function of $k$. 
		The second graph illustrates the corresponding dividend strategy as a function of $k$.}\label{fig:asympt_equili}
	\end{figure}

\appendix  
\section{Model discussion}\label{app-model}
\ref{coeff-assum:2}--\ref{coeff-assum:3} are standard assumptions that guarantee the existence of a smooth canonical solution to \eqref{ODE1}--\eqref{ODE2} and a strong (unique) solution to \eqref{state-process}. 
%
%\added[id=kri,remark={[in the case $D$ is of impulse control type!, or is a strong sol also guaranteed for any $D?$}]{}. 
%
Adding \ref{coeff-assum:4}--\ref{coeff-assum:6} guarantees that the canonical solution $g(\cdot)$ has the properties of 
Lemma \ref{canon-sol-lemma}\ref{canon-sol-lemma:2}--\ref{canon-sol-lemma:4} which are all important ingredients in several of the proofs underlying the main results of the present paper; in particular, the strict concavity-convexity property of $g(\cdot)$, in the sense of Lemma \ref{canon-sol-lemma}\ref{canon-sol-lemma:4}, is crucial.

The fixed cost dividend problem problem \eqref{fixed-cost-prob} was in \cite[Theorem 2.1]{paulsen2007optimal} solved essentially under \ref{coeff-assum:2}--\eqref{rem-about-fixedcostres0:1}, i.e. under weaker assumptions than those of the present paper; see Remark \ref{sol-general-fixed-cost-problem} for further details. A motivation of \eqref{rem-about-fixedcostres0:1} is provided in \cite[Remark 2.1]{paulsen2007optimal}. The fixed cost dividend problem is in \cite{bai2010optimal,bai2012non} studied under further relaxations of \eqref{rem-about-fixedcostres0:1}. We leave for future research how such relaxations  
% of \ref{coeff-assum:4}--\ref{coeff-assum:5} 
can be related to the findings of the present paper. % Regarding \ref{coeff-assum:6}, see the above. 

If  \ref{coeff-assum:6} does not hold, i.e,. if $\mu(0) \leq 0$, then the optimal policy in (the unconstrained) problem \eqref{classical-problem} is to pay all initial surplus $x$ as a dividend immediately, see \cite[Theorem 4.3]{shreve1984optimal}. Hence, in this case the optimal solution does not violate the constraint \eqref{constraint2} (assuming that $k\leq 1$, cf. Remark \ref{equi-k-bigger-than-1}); and the constrained dividend problem is in this case not time-inconsistent and there is thus no need to investigate this case along the lines of the present paper. %Clearly therefore in this case the optimal solution (pay all surplus as a dividend immediately for all $x \geq 0$) is also the precommitment solution with the solution to the problem \eqref{classical-problem} and the optimal solution is also an equilibrium \added[id=kri,remark={}]{\textbf{[right!?].}} 

\section{Proofs}\label{app-proofs}

\begin{proof} (of Lemma \ref{canon-sol-lemma}.) 
First note that \ref{coeff-assum:2} implies that $|\mu(x)|+|\sigma(x)| \leq K(1+x)$ for all $x\geq0$ and some $K>0$. A reference for the existence of a unique canonical solution satisfying \ref{canon-sol-lemma:1} can be found on \cite[p.671]{paulsen2007optimal}. Now use $g(0)=0$, $g'(0)>0$ and \cite[Lemma 4.2 (a)]{shreve1984optimal} to see that  \ref{canon-sol-lemma:2} holds; let us however remark that we can replace \ref{coeff-assum:4} with the relaxed assumption \eqref{rem-about-fixedcostres0:1} (see page \pageref{rem-about-fixedcostres0:1}) and still \ref{canon-sol-lemma:2} holds. 
%(see also \cite[Lemma 2.2(a)]{paulsen2007optimal}). 

Item \ref{canon-sol-lemma:3} follows from \cite[Proposition 2.5]{bai2012optimal}; also here \ref{coeff-assum:4} can be replaced with \eqref{rem-about-fixedcostres0:1}. 
%\footnote{Another sufficient condition given in \cite[Theorem 2.4]{hunting2013optimal}!?}
%

%
From \cite[Lemma 2.2]{paulsen2007optimal} and \ref{coeff-assum:2}--\ref{coeff-assum:4} (also here \ref{coeff-assum:4} can be replaced with \eqref{rem-about-fixedcostres0:1}) it follows that there exists a point $x_b\in [0,\infty]$ such that $g(\cdot)$ is concave on $[0,x_b)$ and convex on $[x_b,\infty)$. \ref{coeff-assum:6} implies that $x_b>0$, see \cite[Lemma 2.2]{paulsen2007optimal}. 
Clearly \ref{canon-sol-lemma:3} implies that $x_b<\infty$. It directly follows that $g''(x_b)=0$. Now, by \eqref{ODE1}, it is easy to find that 
\begin{align}
g'''(x) = 2\frac{r-\mu'(x)}{\sigma^2(x)}g'(x)-2\frac{\mu(x) + \frac{1}{2}(\sigma^2(x))'}{\sigma^2(x)}g''(x).
\end{align}
Hence, relying on \ref{coeff-assum:2}--\ref{coeff-assum:4}
%
%\added[id=kri,remark={dont wer need that $\frac{r-\mu'(x)}{\sigma^2(x)}$ is Lipschitz here?}]{} 
we apply \cite[Lemma 4.1]{shreve1984optimal} to $g'(\cdot)$ and obtain \ref{canon-sol-lemma:4}; we remark that a similar argument is made in the proof of \cite[Lemma 4.2]{shreve1984optimal}.%
%
%[Item \ref{canon-sol-lemma6}: Assumption \ref{coeff-assum2}\ref{coeff-assum2:2} implies that $x_b>0$, by Lemma \ref{canon-sol-lemma}\ref{canon-sol-lemma:4}.  
%Use the first part of Lemma \ref{canon-sol-lemma}\ref{canon-sol-lemma:3} and Assumption \ref{coeff-assum2}\ref{coeff-assum2:1} to see that $x_b<\infty$\textbf{[works?]}\footnote{cf. \ref{coeff-assum2:3} and  implies that $x_b<\infty$ \cite[Remark 2.2(e) and top of p 675]{paulsen2007optimal}}.
%Items \ref{canon-sol-lemma:5} of \cite[Lemma 2.2(b)]{paulsen2007optimal}.
\end{proof}

\begin{proof} (of Proposition \ref{expl-JR}.) The claims can be proved using the usual arguments involving the strong Markov property, It\^{o}'s formula, and that $J(x;\underline{x},\bar{x})$ and $R(x;\underline{x},\bar{x})$ satisfy the ODE in \eqref{ODE1} for $0<x<\bar{x}$ and the boundary conditions 
\begin{align}
&J(0;\underline{x},\bar{x})=0, \enskip J(x;\underline{x},\bar{x})=J(\underline{x};\underline{x},\bar{x})+x-\underline{x}
\mbox{ for } x\geq \bar{x},\\
&R(0;\underline{x},\bar{x})=0, \enskip R(x;\underline{x},\bar{x})=R(\underline{x};\underline{x},\bar{x})+1 \mbox{ for } x\geq \bar{x}.
\end{align}
 %Cf. also e.g. \cite[equation (4.4)]{alvarez2008optimal}. 
\end{proof}

\begin{proof} (of Lemma \ref{propR}.) 
The claims can be verified using \eqref{expl-JR:R}, Lemma \ref{canon-sol-lemma} (which e.g. implies that $\lim_{x\rightarrow \infty}g(x)=\infty$) and $\bar{x}>\underline{x}\geq0$. %For the last result use standard differentiation.  
\end{proof}

\begin{proof} (of Proposition \ref{fix-cost-rem:properties}.) 
Use \eqref{H-function} to see that $H(\bar{x}_c;c,\underline{x}_c,\bar{x}_c)>0$ and hence $\bar{x}_c - \underline{x}_c-c>0$. 
Moreover, 
%$H(\bar{x}_c;c,\underline{x}_c,\bar{x}_c) >H(\underline{x}_c;c,\underline{x}_c,\bar{x}_c)$ and 
$H(\bar{x}_c;c,\underline{x}_c,\bar{x}_c)-H(\underline{x}_c;c,\underline{x}_c,\bar{x}_c) = \bar{x}_c - \underline{x}_c-c>0$. Using also that $\underline{x}_c\geq0$ we find that \eqref{sec:pre-commitment-solution:property3} holds. 

Suppose a function $f:\mathbb{R}\rightarrow \mathbb{R}$ satisfies 

\begin{itemize}
\item $f'(\underline{y})=f'(\bar{y})=1$ for some $0\leq\underline{y}<\bar{y}$, and
\item $f''(y)<0$ for $y\in(0,y^*)$ and $f''(y)>0$ for $y\in(y^*,\infty)$ for some $y^*>0$, 
\end{itemize}
then it directly follows that $0\leq\underline{y}<y^*<\bar{y}$. Note that Lemma \ref{canon-sol-lemma} implies that $g(\cdot)$ satisfies the second item above with $y^*=x^*$ (recall that $x^*:=x_b$). It follows, cf. \eqref{H-function}, that $H(\cdot;c,\underline{x}_c,\bar{x}_c)$ satisfies the second item above; moreover, $H(\cdot;c,\underline{x}_c,\bar{x}_c)$ satisfies the first item above (with $\underline{x}_c=\underline{y}$ and $\bar{x}_c=\bar{y}$)  within case \ref{fix-cost-rem:1} (of Proposition \ref{fix-cost-rem}).   
%Hence, 
%\[\mbox{$H''(x;c,\underline{x}_c,\bar{x}_c)<0$ for $x\in(0,x^*)$ and $H''(x;c,\underline{x}_c,\bar{x}_c)>0$ for $x\in(x^*,\infty)$,}\]
%which means that also $H(\cdot;c,\underline{x}_c,\bar{x}_c)$ satisfies the 
% and Lemma \ref{canon-sol-lemma} \ref{canon-sol-lemma:3} together with Lemma \ref{coeff-assum2} \textbf{[check]} it follows that 
%
Hence, \eqref{sec:pre-commitment-solution:property0} holds in case \ref{fix-cost-rem:1}. It can be similarly shown that \eqref{sec:pre-commitment-solution:property0} holds in  case \ref{fix-cost-rem:2}.
%\added[id=kri,remark={[See also the explanation on \cite[p. 675 ]{paulsen2007optimal}]... [see also \cite[Example 6.2]{oksendal2005applied} especially pages 88-89.]}]{}

Consider case \ref{fix-cost-rem:2}. From \eqref{sec:pre-commitment-solution:property0} and \eqref{sec:pre-commitment-solution:property3} we know that $\bar{x}_c>x^*,c$. 
%
%Suppose \eqref{fixed-cost-smoothfit2} has a solution $\bar{x}_c$ for a given $c$ (it is easy to see that it always does as long as there exists a $x^*\in(0,\infty)$ s.t. $g''(x^*)=0$ \textbf{[or smth like this]}), and recall that this is then the only such solution and that $\bar{x}_c>x^*,c$. 
%
Hence, using also \eqref{H-function} and \eqref{fixed-cost-smoothfit2}, we see that $\bar{x}_c$ is the unique solution to the equation
\begin{align}   
A(\bar{x},c):= g'(\bar{x}) (\bar{x}-c)-g(\bar{x}) = 0, \enskip \bar{x}>x^*,c. \label{eq-systfixed-proof:0}
\end{align}
Note that 
$A_{\bar{x}}(\bar{x},c) = g''(\bar{x}) (\bar{x}-c)>0$ and 
$A_{c}(\bar{x},c) = -g'(\bar{x})<0$ for all $\bar{x}>x^*,c$ (Lemma \ref{canon-sol-lemma}). 
Hence, by the implicit function theorem, $\bar{x}_c=\bar{x}_c(c)$ for a function $\bar{x}_c(\cdot)$ satisfying 
\[\bar{x}_c'(c) = - \frac{A_{c}(\bar{x}_c,c)}{A_{\bar{x}}(\bar{x}_c,c)} = \frac{g'(\bar{x}_c)}{g''(\bar{x}_c) (\bar{x}_c-c)}>0.\] 
It follows that $\bar{x}_c$ is (strictly) increasing and continous in $c$ within case \ref{fix-cost-rem:2}. In case \ref{fix-cost-rem:1} it similarly holds that 
%\eqref{fixed-cost-smoothfit1} has a solution 
$(\underline{x}_c,\bar{x}_c)$ is 
%for a given $c$, and recall this is then the only such solution and that $\bar{x}_c>x^*,c$ and $x^*>\underline{x}_c>0$. Equivalently, $(\underline{x}_c,\bar{x}_c)$ 
the unique solution to the equation system
\begin{align}
\begin{split}   
B(\underline{x},\bar{x},c)&:= g'(\underline{x}) (\bar{x}-\underline{x}-c)-g(\bar{x})+ g(\underline{x}) = 0 \\
C(\underline{x},\bar{x},c)&:= g'(\bar{x}) (\bar{x}-\underline{x}-c)-g(\bar{x})+ g(\underline{x}) = 0 \\
&\mbox{for $\bar{x}>x^*,c$ and $x^*>\underline{x}>0$.} \label{eq-systfixed-proof}
\end{split}
\end{align}
Note that, for all $\bar{x}>x^*,c$ and $x^*>\underline{x}>0$,  
\begin{itemize}
\item $B_{\underline{x}}(\underline{x},\bar{x},c) = g''(\underline{x}) (\bar{x}-\underline{x}-c)<0$, 
\item $B_{\bar{x}}(\underline{x},\bar{x},c)= C_{\underline{x}}(\underline{x},\bar{x},c) = g'(\underline{x})-g'(\bar{x})$,
\item $B_c(\underline{x},\bar{x},c) = -g'(\underline{x})<0$,
\item $C_{\bar{x}}(\underline{x},\bar{x},c) = g''(\bar{x}) (\bar{x}-\underline{x}-c)>0$,
\item $C_c(\underline{x},\bar{x},c) = -g'(\bar{x})<0$.
\end{itemize}
The Jacobian matrix of $(\underline{x},\bar{x})\mapsto (B(\underline{x},\bar{x},c),C(\underline{x},\bar{x},c))^{T}$ (where $^T$ denotes transpose), cf. \eqref{eq-systfixed-proof}, is
\begin{align}
 \begin{pmatrix}
  g''(\underline{x})(\bar{x}-\underline{x}-c) & g'(\underline{x})-g'(\bar{x}) \\
	g'(\underline{x})-g'(\bar{x}) & g''(\bar{x})(\bar{x}-\underline{x}-c)
  \end{pmatrix},
\end{align}
and hence its determinant is 
\begin{align}
g''(\bar{x})g''(\underline{x}) (\bar{x}-\underline{x}-c)^2-(g'(\underline{x})-g'(\bar{x}))^2<0,
\end{align}
where we used that $g''(\underline{x})<0$ and $g''(\bar{x})>0$ (to see this use $\underline{x}>x^*>\bar{x}$ and Lemma \ref{canon-sol-lemma}). The Jacobian matrix is therefore invertible. Note that $g'(\underline{x}_c)=g'(\bar{x}_c)$, cf. \eqref{eq-systfixed-proof}, and use the implicit function theorem to see that $(\underline{x}_c,\bar{x}_c)=(\underline{x}_c,\bar{x}_c)(c)$ for a function $(\underline{x}_c,\bar{x}_c)(\cdot)$ satisfying 
\[
\left(\frac{d}{dc}(\underline{x}_c,\bar{x}_c)(c) \right)^T = 
- 
\begin{pmatrix}
  g''(\underline{x}_c)(\bar{x}_c-\underline{x}_c-c) & 0 \\
	0 & g''(\bar{x}_c)(\bar{x}_c-\underline{x}_c-c)
\end{pmatrix}^{-1}
\begin{pmatrix}
   -g'(\underline{x}_c)\\
	 - g'(\bar{x}_c)
\end{pmatrix}
\]
\[
\enskip \enskip \enskip  \enskip \enskip \enskip \enskip \enskip \enskip \enskip \enskip \enskip 
= \frac{1}{g''(\bar{x}_c)g''(\underline{x}_c) (\bar{x}_c-\underline{x}_c-c)^2}\begin{pmatrix}
   g''(\bar{x}_c)(\bar{x}_c-\underline{x}_c-c)g'(\underline{x}_c)\\
	 g''(\underline{x}_c)(\bar{x}_c-\underline{x}_c-c)g'(\bar{x}_c)
\end{pmatrix}
\]
\[
=\begin{pmatrix}
   \frac{g'(\underline{x}_c)}{g''(\underline{x}_c)(\underline{x}_c-\bar{x}_c-c)}\\
	 \frac{g'(\bar{x}_c)}{g''(\bar{x}_c)(\underline{x}_c-\bar{x}_c-c)}
\end{pmatrix}.
\enskip \enskip \enskip  \enskip \enskip \enskip \enskip \enskip \enskip \enskip \enskip \enskip
\enskip \enskip \enskip  \enskip \enskip \enskip \enskip \enskip  
\]
Hence, 
$\underline{x}_c$ is (strictly) decreasing and continous in $c$ 
while 
$\bar{x}_c$ is (strictly) increasing and continous in $c$ 
within case \ref{fix-cost-rem:1}.  Using the observations above it is easy to see that if the following statements,
\begin{align}
\begin{split}  
&\mbox{(1) there exists a $\bar{c}$ which is such that if  $c<\bar{c}$ then we are in case \ref{fix-cost-rem:1} and if}\\
&\mbox{$c\geq\bar{c}$, then we are in case \ref{fix-cost-rem:2}, and (2) $(\underline{x}_c,\bar{x}_c)(c)\rightarrow (0,(\bar{x}_c)(\bar{c}))$ as $c\nearrow\bar{c}$,}\label{fix-cost-rem:properties:h1}
\end{split}
\end{align} 
hold, then \eqref{sec:pre-commitment-solution:property1} holds, and so does \eqref{sec:pre-commitment-solution:property3.5}. Recall that $(\underline{x}_c,\bar{x}_c)$ is the unique solution to \eqref{eq-systfixed-proof} in case \ref{fix-cost-rem:1}; but this is equivalent to 
\begin{align} 
g'(\underline{x}_c) = g'(\bar{x}_c) = \frac{g(\bar{x}_c)- g(\underline{x}_c)}{\bar{x}_c-\underline{x}_c-c}
\left(>\frac{g(\bar{x}_c)- g(\underline{x}_c)}{\bar{x}_c-\underline{x}_c}\right). \label{fix-cost-rem:properties:h2}
\end{align} 
Similarly, in case \ref{fix-cost-rem:2} note that \eqref{eq-systfixed-proof:0} is equivalent to
\begin{align} 
g'(\underline{x}_c)= \frac{g(\bar{x}_c)}{\bar{x}_c-c} 
\left(=\frac{g(\bar{x}_c)- g(0)}{\bar{x}_c-0-c}>\frac{g(\bar{x}_c)- g(0)}{\bar{x}_c-0}\right). \label{fix-cost-rem:properties:h3}
\end{align} 
Note that \eqref{fix-cost-rem:properties:h2} means that, for any $c$, the derivatives $g'(\underline{x}_c)=g'(\bar{x}_c)$ strictly dominate the slope of a line between the points $(\underline{x}_c,g(\underline{x}_c))$ and $(\bar{x}_c,g(\bar{x}_c))$ and that the difference between these derivatives and the slope is strictly decreasing in $c$. A similar observation can be made for \eqref{fix-cost-rem:properties:h3}. Using the observations above and recalling that $g(\cdot)$ is strictly concave on $(0,x^*)$ and strictly convex on $(x^*,\infty)$ it is easy to see that \eqref{fix-cost-rem:properties:h1} holds%(cf. Figure \ref{fig-g} in Section \ref{example:sec}) 
, which thus implies that \eqref{sec:pre-commitment-solution:property1} and \eqref{sec:pre-commitment-solution:property3.5} hold; and moreover that
\[g'(\underline{x}_c) = g'(\bar{x}_c) = \frac{g(\bar{x}_c)- g(\underline{x}_c)}{\bar{x}_c-\underline{x}_c-c} \rightarrow g'(x^*) 
\enskip \mbox{as $c\searrow 0$},\]
and 
\[\underline{x}_c \nearrow x^* \enskip \mbox{and} \enskip \bar{x}_c \searrow x^* \enskip \mbox{as $c\searrow 0$}.\]
It thus also follows that \eqref{sec:pre-commitment-solution:property2} and \eqref{sec:pre-commitment-solution:property4} hold (to see this use also e.g. \eqref{uncon-problembU} and \eqref{H-function}). 
% 
%
%
%\added[id=kri,remark={
%Suppose a function $f_a$ satisfies 
%i) $f_a'(\underline{y}_a)=f'_a(\bar{y}_a)=1$ for some $0\leq\underline{y}_a<\bar{y}_a$, and 
%ii) $f_a''(y)<0$ for $y\in(0,y^*)$ and $f''_a(y)>0$ for $y\in(y^*,\infty)$ for some $y^*>0$; and another 
%function $f_b$ satisfies 
%i) $f_b'(\underline{y}_b)=f'_b(\bar{y}_b)=1$ for some $0\leq\underline{y}_b<\bar{y}_b$,  
%ii) $f_b''(y)<0$ for $y\in(0,y^*)$ and $f''_b(y)>0$ for $y\in(y^*,\infty)$ for some $y^*>0$, and 
%iii) $f'_b(x)>f'_a(x)$ for all $x\geq0$ \textbf{[must also show that this holds then!]}. Then, $0\leq\underline{y}_a < \bar{y}_a <y^* <\bar{y}_a <\bar{y}_a$ (draw a picture). Using this it is easy to verify that \eqref{sec:pre-commitment-solution:property1} [and \eqref{sec:pre-commitment-solution:property0}!?] holds in case \eqref{fixed-cost-smoothfit1} has a solution. The case when \eqref{fixed-cost-smoothfit1} does not have a solution is similar. \textbf{[check]}. (See also \cite[Proposition 2.3]{paulsen2007optimal}) \textbf{[check]}.}]{}
%
%
%
\end{proof}

\begin{proof} (of Lemma \ref{lem2}.) The claims can be verified using Lemma \ref{propR}\ref{propR:part1} and properties of the function $c\mapsto (\underline{x}_{c},\bar{x}_{c})$ corresponding to Proposition \ref{fix-cost-rem:properties}. 
%Note that $\bar{x}_{c}$ is strictly increasing in $c$, and that $\underline{x}_{c}$ is strictly decreasing in $c$ until reaching zero where it is constant. 
%
%Note that $\underline{x}_{c},\bar{x}_{c}\rightarrow x^*$ as $c\rightarrow 0$, cf. \cite[Remark 2]{jeanblanc1995optimization}. 
%Since for sufficiently large $c$ it holds that $\underline{x}_{c}=0$ and $\bar{x}_{c}$ is strictly increasing in $c$ it is easy to see that there exists a $c$ such that 
%$R\left(x_0;\underline{x}_{c(x_0,k)},\bar{x}_{c(x_0,k)}\right) = R\left(x_0;0,\bar{x}_c\right) \geq \frac{1}{k}$ for any $k$. 
%\footnote{implicit function theorem... intermediate value theorem... etc. ....}
\end{proof}

\begin{proof} (of Lemma \ref{lemma-equi}.) Proposition \ref{expl-JR} and the fact that the functions in \eqref{eq-eq2:constraint}--\eqref{eq-eq2:smooth-fit} are defined only for ${\underline{x}}>{\bar{x}}\geq 0$ are used throughout the proof.

Let us prove \ref{lemma-equi:part1}: Suppose $k=1$. It is easy to see that \eqref{eq-eq2:constraint} holds if and only if ${\underline{x}}=0$. Using this and \eqref{eq-eq2:smooth-fit} it is easy to see that the claim holds if and only if the equation 
\begin{align} 
A(\bar{x}):=g'(\bar{x})\bar{x}-g(\bar{x}) =0
\end{align}
has exactly one solution in $(0,\infty)$ and this solution is strictly larger than $x^*$. 
To see that this the case it suffices to note that: 
\begin{itemize} 
\item $\lim_{\bar{x} \searrow 0}A(\bar{x})=A(0)=0$ and $\lim_{\bar{x} \rightarrow \infty}A(\bar{x})=\infty$.
\item $A'(\bar{x})= g''(\bar{x})\bar{x}$ which, by Lemma \ref{canon-sol-lemma}, means that $A'(\bar{x})<0$ for $\bar{x} \in (0,x^*)$ and $A'(\bar{x})>0$ for $\bar{x} >x^*$.
\end{itemize}  
Now suppose $k<1$ (recall that $k>0$, see \eqref{constraint2}). It follows from \eqref{eq-eq2:constraint} that ${\underline{x}}>0$. 
Recalling that $\bar{x}>\underline{x}\geq 0$ by definition it is easy to see that \eqref{eq-eq2:smooth-fit} holds if and only if 
\begin{align}
g'(\bar{x}) = \frac{g(\bar{x})-g(\underline{x})}{{\bar{x}}-{\underline{x}}}. \label{lemma-equi:pfeq1}
\end{align}
Now recall from Lemma \ref{canon-sol-lemma} that 
$g''(x)<0$ if  $x\in (0,x^*)$ and $g''(x)>0$ if  $x\in (x^*,\infty)$, and $\lim_{x\rightarrow\infty}g'(x)=\infty$, from which it is easy to see %(draw a picture) 
that:
%\begin{itemize} 
%\item $B_{\bar{x}}(\underline{x},\bar{x})= g''(\bar{x})(\bar{x}-\underline{x})$  which means that $B_{\bar{x}}(\underline{x},\bar{x})<0$ if  $\bar{x} \in (0,x^*)$ and $B_{\bar{x}}(\underline{x},\bar{x})>0$ if $\bar{x} >x^*$.
%\item $\lim_{\bar{x} \searrow \underline{x}}B(\underline{x},\bar{x})=B(\underline{x},\underline{x})=0$ and $\lim_{\bar{x} \rightarrow \infty}B(\underline{x},\bar{x})=\infty$.
%\end{itemize}
%Using these items (and the continuity of $B(\underline{x},\cdot)$) it is easily verified that:  
\begin{itemize} 

\item If $\underline{x} \geq x^*$ then no $\bar{x}>\underline{x}$ such that \eqref{lemma-equi:pfeq1} holds exists.

\item If $\underline{x} < x^*$ then a unique $\bar{x}>\underline{x}$ such that \eqref{lemma-equi:pfeq1} holds exists, and $\bar{x}>x^*$.

\item There exists a continous strictly decreasing function 
\begin{align} 
\bar{x}(\cdot) \label{new-func-july}
\end{align}
such that \eqref{lemma-equi:pfeq1} holds if and only if $\bar{x}=\bar{x}(\underline{x})$ for any a fixed $\underline{x} \in (0,x^*)$, where $\bar{x}(\underline{x})>x^*$. Moreover, sending $\underline{x}\nearrow x^*$ implies that $\bar{x}(\underline{x})\searrow x^*$.

%\item If we let $\underline{x}\nearrow x^*$ then we also have to let $\bar{x}\searrow x^*$ in order for  \eqref{lemma-equi:pfeq1} to holds. (To see this use $\lim_{\bar{x} \searrow \underline{x}}B(\underline{x},\bar{x})=B(\underline{x},\underline{x})=0$ and $B_{\bar{x}}(\underline{x},\bar{x})>0$ if $\bar{x} >x^*$).
\end{itemize}
%
%Moreover, the following items are easily verified: 
%\begin{itemize} 
%\item $R\left(\bar{x};\underline{x},\bar{x}\right)$ is strictly decreasing in  $\bar{x}$ and strictly increasing  $\underline{x}$.
%\item $R\left(\bar{x};0,\bar{x}\right)=1< \frac{1}{k}$ for any fixed $\bar{x}>0$.
%\item $\lim_{\underline{x} \nearrow \bar{x}} R\left(\bar{x};\underline{x},\bar{x}\right)=\infty$, for any fixed $\bar{x}>0$. 
%\end{itemize} 
 
From the observations above and properties of the function $\bar{x}\mapsto R(\bar{x};\underline{x},\bar{x})$ (Lemma \ref{propR}) it follows that the following schedule gives a unique solution to \eqref{eq-eq2:constraint}--\eqref{eq-eq2:smooth-fit} in the case $k<1$:
\begin{itemize} 
\item Pick a $\underline{x}_1 \in (0,x^*)$. 

\item Determine $\bar{x}_1=\bar{x}(\underline{x}_1)$ by verifying \eqref{lemma-equi:pfeq1} (which implies that \eqref{eq-eq2:smooth-fit} holds).

\item If $R\left(\bar{x}_1;\underline{x}_1,\bar{x}_1\right) > \frac{1}{k}$ then we know that $\underline{x}_1$ is too high and  $\bar{x}_1$ is too low (cf. that $\bar{x}(\cdot)$ is decreasing) to be a solution to \eqref{eq-eq2:constraint}, and we set $\underline{x}_1=\underline{x}_1^h$ and choose a new $\underline{x}_2<\underline{x}_1^h$. 
Analogously, if $R\left(\bar{x}_1;\underline{x}_1,\bar{x}_1\right) < \frac{1}{k}$ then we know that $\underline{x}_1$ is too low and we set $\underline{x}_1=\underline{x}_1^l$ and choose a new $\underline{x}_2>\underline{x}_1^l$. 

\item Iterate the steps above always choosing  (using e.g. the bisection method) $\underline{x}_{n+1}$ smaller than all $\underline{x}_i^h,i\leq n$ and larger than all $\underline{x}_i^l,i\leq n$ until you find an $\underline{x}$ such that, with $\bar{x}=\bar{x}(\underline{x})$, it holds that 
$R\left(\bar{x};\underline{x},\bar{x}\right) = \frac{1}{k}$ (which by Lemma \ref{propR} is possible). 
\end{itemize}
Note that this schedule can easily be modified in order to find the solution to \eqref{eq-eq2:constraint}--\eqref{eq-eq2:smooth-fit} in the case $k=1$, using that in this case \eqref{eq-eq2:constraint} holds if and only if $\underline{x}=0$.  

Let us now prove \ref{lemma-equi:part2}: It is easy to verify that \eqref{eq-eq2:constraint} holds if and only if $(\underline{x},\bar{x})$ is a solution to the equation 
\begin{align}
g(\underline{x})
= (1-k)g(\bar{x}).\label{lemma-equi:pfeq2} 
\end{align}
%which can only be true if $\underline{x}>0$, given that $\bar{x}>0$ (use the easily verified fact that $\alpha_1>0>\alpha_2$).
Inserting \eqref{lemma-equi:pfeq2}  into \eqref{lemma-equi:pfeq1} yields \eqref{lemma-equi2:eq1} and inserting \eqref{lemma-equi2:eq1} into \eqref{lemma-equi:pfeq2}  yields 
\begin{align} 
g\left(\bar{x}-k\frac{g(\bar{x})}{g'(\bar{x})}\right) - (1-k)g(\bar{x})=0\label{lemma-equi2:eq2}.
\end{align}
The claim can now be verified using \eqref{expl-JR:J}. Item \ref{lemma-equi:part3} follows from \eqref{expl-JR:J}.
%It can now be can be verified \textbf{[hopefully, although this seems very messy...]} that $l(x^*)<0$, $l'(\cdot)>0$ on $(x^*,\infty)$ and $\lim_{\bar{x}\rightarrow \infty}l(\bar{x})=\infty$ The result follows.  
% 
 \end{proof}

\begin{proof} (of Theorem \ref{sol-conv-equi}.)
Recall that $(\utt x_k,\hat x_k)$ solves \eqref{eq-eq2:constraint}--\eqref{eq-eq2:smooth-fit}. 
Hence, using Lemma \ref{lemma-equi} we obtain \eqref{sol-conv-equi:1}. 
To see that \eqref{sol-conv-equi:3} and \eqref{sol-conv-equi:4} hold recall the properties of the function \eqref{new-func-july} and adapt the arguments after \eqref{new-func-july} in the obvious way, recalling also \eqref{lemma-equi:pfeq1} and properties of the function $\bar{x}\mapsto R(\bar{x};\underline{x},\bar{x})$ (Lemma \ref{propR}). 
Item \eqref{sol-conv-equi:2} is proved similarly. 
Item \eqref{sol-conv-equi:6} follows from Theorem \ref{sol-equi} and Lemma \ref{lemma-equi}.
\end{proof}

\bibliographystyle{abbrv}
\bibliography{time-incon_stopping}

\end{document}